\crefname{section}{Section}{Sections}
\crefname{subsection}{\S}{\S\S}
\crefname{subsubsection}{\S}{\S\S}
\theoremstyle{plain}
\newtheorem{lemma}{Lemma}[section]
\newtheorem{proposition}[lemma]{Proposition}
\newtheorem{corollary}[lemma]{Corollary}
\newtheorem{theorem}[lemma]{Theorem}
\theoremstyle{nonumberplain}
\newtheorem{theoremN}{Theorem}
\theoremstyle{plain}
\newtheorem{definition}[lemma]{Definition}
\newtheorem{example}[lemma]{Example}
\newtheorem{remark}[lemma]{Remark}
\crefname{definition}{definition}{definitions}
\crefname{ex}{example}{examples}
\crefname{remark}{remark}{remarks}
\crefname{convention}{convention}{conventions}
\crefname{notation}{notation}{notations}
\crefname{table}{table}{tables}
\crefname{lemma}{lemma}{lemmas}
\crefname{proposition}{proposition}{propositions}
\crefname{corollary}{corollary}{corollaries}
\crefname{theorem}{theorem}{theorems}
\crefname{enumi}{}{}
\crefname{assumption}{assumption}{Assumptions}
\crefname{equation}{}{}
\numberwithin{equation}{section}
\renewcommand{\theequation}{\thesection-\arabic{equation}}
\theoremstyle{nonumberplain}
\newtheorem{proof}{Proof}
\newcommand\pf[1]{\newtheorem{#1}{Proof of \Cref{#1}}}
\newcommand\bC{{\mathbb C}}
\newcommand\bG{{\mathbb G}}
\newcommand\bN{{\mathbb N}}
\newcommand\bS{{\mathbb S}}
\newcommand\bX{{\mathbb X}}
\newcommand\bY{{\mathbb Y}}
\newcommand\bZ{{\mathbb Z}}
\newcommand\cC{{\mathcal C}}
\newcommand\cD{{\mathcal D}}
\newcommand\cE{{\mathcal E}}
\newcommand\cM{{\mathcal M}}
\newcommand\cP{{\mathcal P}}
\newcommand\cQ{{\mathcal Q}}
\newcommand\cU{{\mathcal U}}
\DeclareMathOperator{\id}{id}
\newcommand\numberthis{\addtocounter{equation}{1}\tag{\theequation}}
\newcommand{\cat}[1]{\textsc{#1}}
\newcommand{\qedhere}{\mbox{}\hfill\ensuremath{\blacksquare}}
\title{Non-commutative ambits and equivariant compactifications}
\author{Alexandru Chirvasitu}
\begin{document}

\date{}

\newcommand{\Addresses}{{% additional braces for segregating \footnotesize
  \bigskip
  \footnotesize

  \textsc{Department of Mathematics, University at Buffalo, Buffalo,
    NY 14260-2900, USA}\par\nopagebreak \textit{E-mail address}:
  \texttt{achirvas@buffalo.edu}

% %   \medskip
% %   
% %   \textsc{Department of Mathematics, institution,
% %     address}\par\nopagebreak \textit{E-mail address}:
% %   \texttt{??}
% % 
}}

\maketitle

\begin{abstract}
  We prove that an action $\rho:A\to M(C_0(\mathbb{G})\otimes A)$ of a locally compact quantum group on a $C^*$-algebra has a universal equivariant compactification, and prove a number of other category-theoretic results on $\mathbb{G}$-equivariant compactifications: that the categories compactifications of $\rho$ and $A$ respectively are locally presentable (hence complete and cocomplete), that the forgetful functor between them is a colimit-creating left adjoint, and that epimorphisms therein are surjective and injections are regular monomorphisms.

  When $\mathbb{G}$ is regular coamenable we also show that the forgetful functor from unital $\mathbb{G}$-$C^*$-algebras to unital $C^*$-algebras creates finite limits and is comonadic, and that the monomorphisms in the former category are injective.
\end{abstract}

\noindent {\em Key words: ambit; equivariant compactification; locally compact quantum group; action; $C^*$-algebra; multiplier algebra; limit; colimit; epimorphism; monomorphism; locally presentable category; presentable object; locally generated category; monad; comonad; monadic; comonadic}

\vspace{.5cm}

\noindent{MSC 2020: 18A30; 18A40; 18C35; 18C20; 46L67; 46L05; 46L52; 43A22}

\tableofcontents

%%%%%%%%%%%%%%%%%%%%%%%%%%%%%%%%%%%%%%%%%%%%%%%%%%%%%%%%%%%%%%%%%%%%%%%%%%%%%
%%%%%%%%%%%%%%%%%%%%%%%%%%%%%%%%%%%%%%%%%%%%%%%%%%%%%%%%%%%%%%%%%%%%%%%%%%%%%
\section*{Introduction}

Consider a continuous action
\begin{equation*}
  \pi:\bG\times\bX\to \bX
\end{equation*}
of a topological group on a topological space. While there is always a universal, functorial map $\bX\to \beta\bX$ into a compact Hausdorff space (the {\it Stone-\v{C}ech compactification} of $\bX$ \cite[\S 38]{mnk}), it is only very rarely that the induced action $\bG\times\beta\bX\to \beta\bX$ is continuous. The original motivation for the present paper was the fact that nevertheless, there is always a universal {\it $\bG$-equivariant compactification} of $\bX$. In short (and more precisely), the inclusion functor
\begin{equation*}
  \cat{CH}^{\bG}\to \cat{Top}^{\bG}
\end{equation*}
from compact Hausdorff $\bG$-spaces into topological $\bG$-spaces has a left adjoint; see \cite[\S 2.8]{dvr-puc} or the much more extensive discussion in \cite[\S 4.3]{dvr-bk}. That left adjoint then specializes back to $\bX\mapsto\beta\bX$ when $\bG$ is trivial. The (greatest) {\it ambit} of $\bG$ (\cite[Introduction]{brk}), featuring in the title of this paper, is the result of applying that left adjoint to the translation self-action $\bG\times\bG\to \bG$.

The aim here is to examine the same types of questions in the context of {\it non-commutative} topology (i.e. $C^*$-algebras). Topological groups, in particular, are now locally compact {\it quantum} groups (LCQGs) in the sense of \cite[Definition 4.1]{kvcast}:

\begin{definition}\label{def:lcqg}
  A {\it locally compact quantum group} $\bG$ consists of a $C^*$-algebra $C=C_0(\bG)$ equipped with a non-degenerate morphism
  \begin{equation*}
    \Delta=\Delta_{\bG}:C\to M(C\otimes C)
  \end{equation*}
  such that
  \begin{itemize}
  \item we have $(\Delta\otimes\id)\Delta=(\id\otimes\Delta)\Delta$ as maps $C\to M(C^{\otimes 3})$;    
  \item the sets
    \begin{equation*}
      \{(\omega\otimes\id)\Delta(a)\ |\ \omega\in C^*,\ a\in A\}
      \quad\text{and}\quad
      \{(\id\otimes\omega)\Delta(a)\ |\ \omega\in C^*,\ a\in A\}
    \end{equation*}
    are contained in $A$ and span dense subsets therein;
  \item and $C$ is equipped with left and right-invariant approximate KMS weights. 
  \end{itemize}
\end{definition}
The weights in last item are analogues of the left and right Haar measures on a locally compact group (hence the name: {\it Haar weights}); we will have no need to elaborate on the precise meaning of that part of the definition, as it plays only an indirect role in the discussion below.

Apart from whatever intrinsic interest generalization for its own sake might hold, recasting topological results in $C^*$-algebraic terms occasionally has the effect of sharpening the proofs by divorcing them from some of the unnecessary point-set topological baggage. Additionally, this type of generalization can also raise its own peculiar problems that might have been difficult to notice or appreciate in the ``classical'' setup of ordinary spaces/actions. 

With this in mind, the dictionary is as follows:
\begin{itemize}
\item in place of an ordinary space $\bX$ we have a possibly non-commutative $C^*$-algebra $A=C_0(\bX)$ (of ``continuous functions vanishing at infinity'' on a non-commutative space);
\item $\bG$ is a locally compact quantum group as above, reified as its own function algebra $C_0(\bG)$;
\item $\pi$ becomes instead an {\it action}
  \begin{equation}\label{eq:rhointro}
    \rho:A\to M(C_0(\bG)\otimes A),
  \end{equation}
  in the sense of \Cref{def:act} below, where $M(-)$ denotes the {\it multiplier algebra} construction \cite[\S II.7.3]{blk};
\item and (perhaps as expected from perusing the classical machinery), a {\it $\bG$-equivariant compactification} of $\rho$ (\Cref{def:cpct,def:cats}) is a $\bG$-action on a {\it unital} $C^*$-algebra $B$ ``sandwiched'' equivariantly between $A$ and its multiplier algebra $M(A)$ (the latter being the non-commutative analogue of $\beta\bX$).
\end{itemize}

One of the results of this paper, then, is that equivariant compactifications exist in the non-commutative setup sketched above (\Cref{cor:term}):

\begin{theoremN}
  Every action by a locally compact quantum group on a $C^*$-algebra admits a universal equivariant compactification. \qedhere
\end{theoremN}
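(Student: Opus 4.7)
The plan is to construct the universal equivariant compactification as the closure, inside $M(A)$, of the directed union of all equivariant compactifications of $\rho$. Since by definition any equivariant compactification of $\rho$ is a unital $C^*$-subalgebra $B$ sandwiched between $A$ and $M(A)$ carrying a continuous $\bG$-action restricted from (and compatible with) $\rho$, the collection $P$ of such $B$ is a (small) poset under inclusion, parametrized by unital $C^*$-subalgebras of $M(A)$.

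The first nontrivial step is to show that $P$ is upward-directed: given $B_1,B_2\in P$, the unital $C^*$-subalgebra $B_1\vee B_2\subseteq M(A)$ they jointly generate is again an equivariant compactification. A priori, $\rho$ restricted to $B_1\vee B_2$ takes values only in $M(C_0(\bG)\otimes M(A))$; the real content is sharpening the target to $M(C_0(\bG)\otimes (B_1\vee B_2))$ and verifying the associated density condition in the spirit of \Cref{def:lcqg}.

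Having established directedness, I would form $B_\infty:=\overline{\bigcup_{B\in P} B}^{\,\|\cdot\|}\subseteq M(A)$ and check that $B_\infty\in P$. Continuity of the extended action on $B_\infty$ follows by a norm-approximation argument from the continuous actions on the individual $B\in P$ guaranteed by the previous step. By construction every equivariant compactification of $\rho$ embeds into $B_\infty$ over $A$, so $B_\infty$ is universal.

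The main obstacle is the directedness step: verifying that both the density condition for an action and the multiplier-algebra refinement $\rho(B_1\vee B_2)\subseteq M(C_0(\bG)\otimes (B_1\vee B_2))$ survive passage from the $B_i$ to the $C^*$-subalgebra they generate. Tensor products of $C^*$-algebras interact with subalgebra inclusions and multiplier constructions in somewhat delicate ways, so one must exploit the equivariance of each $B_i$ carefully to keep the slice maps $(\omega\otimes\id)\rho$ inside $B_1\vee B_2$ while still yielding a dense subspace. This is the technical content that the paper's local-presentability framework, announced in the abstract, is designed to package cleanly, allowing the universal compactification to be obtained as a colimit in the ambient category of equivariant compactifications rather than through this hands-on lattice construction.
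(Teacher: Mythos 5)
Your proposal has two genuine gaps. First, the reduction to subalgebras of $M(A)$ is asserted ``by definition'' but is not the paper's definition: an object of $\cat{cpct}(\rho)$ (\Cref{def:cpct,def:cats}) is a quadruple $(B,\ell,r,\rho_B)$ in which $r:B\to M(A)$ need not be injective and $\rho_B$ is an abstract action, not a priori a restriction of (the strict extension of) $\rho$. Your $B_\infty$ would therefore only be terminal among the \emph{embedded} compactifications unless you also show that every $(B,\ell,r,\rho_B)$ admits a (necessarily unique) morphism to an embedded one --- i.e.\ that $r(B)\subseteq M(A)$, equipped with the restriction of the canonical extension $M(A)\to M(C_0(\bG)\otimes A)$, is again an object of $\cat{cpct}(\rho)$ and that $B\to r(B)$ is equivariant. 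This step uses the equivariance of $r$ together with the embedding $M(C_0(\bG)\otimes M(A))\hookrightarrow M(C_0(\bG)\otimes A)$ of \Cref{cor:mmab} and the injectivity statement of \Cref{le:isfnc}; it is exactly the nontrivial verification in the paper's own proof of \Cref{cor:term} (there phrased as the solution-set condition), and it is absent from your argument.

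Second, you explicitly leave the central technical step --- that $B_1\vee B_2$ and the closed directed union again satisfy the multiplier condition $\rho(\,\cdot\,)(C_0(\bG)\otimes\bC)\subseteq C_0(\bG)\otimes(\,\cdot\,)$ and the density condition \Cref{eq:aactcont} --- unproved, deferring it to ``the paper's local-presentability framework.'' That is circular for a self-contained proof, and also misattributes the paper's mechanism: \Cref{cor:term} is not derived from local presentability but from cocompleteness and colimit creation (\Cref{th:ccmpl}) combined with the dual General Adjoint Functor Theorem, the solution-set being the embedded compactifications described above; local presentability enters only later, for completeness and the adjointness statements. For the record, your directedness step is in fact provable by hand (for $b_i\in B_i$ one has $\rho(b_1b_2)(c\otimes 1)=\rho(b_1)\bigl(\rho(b_2)(c\otimes 1)\bigr)$, and approximating $\rho(b_2)(c\otimes 1)$ by sums of elementary tensors $c'\otimes b_2'$ gives $\rho(b_1)(c'\otimes b_2')=\bigl(\rho(b_1)(c'\otimes 1)\bigr)(1\otimes b_2')\in C_0(\bG)\otimes B_1B_2$, with a similar manipulation of closed linear spans giving the density condition, and norm continuity of $\rho$ handling the closure), so the lattice-theoretic route can be completed and would give a more ``internal'' construction than the paper's categorical one; but as written, both the reduction step and these verifications are missing.
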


The proof is essentially category-theoretic, and suggests its own separate line of investigation into the nature of various categories of quantum actions or spaces. Specifically, given an action \Cref{eq:rhointro}, we focus in \Cref{se:cats} on the category
\begin{itemize}
\item $\cat{cpct}(\rho)$ of equivariant compactifications of $A$;
\item $\cat{cpct}(A)$ of compactifications of (the noncommutative space underlying) $A$;
\item and $A\downarrow \cC^*_1$, of $C^*$ morphisms from $A$ into unital $C^*$-algebras.
\end{itemize}
These are progressively poorer in structure, with forgetful functors pointing down the chain as listed here (see \Cref{def:cpct,def:cats} and subsequent discussion). A brief summary of some of the results (\Cref{th:ccmpl,th:episurj,th:injeq,th:allpres} and \Cref{cor:allgood,cor:ladj}) reads:

\begin{theoremN}
  Let \Cref{eq:rhointro} be an action of an LCQG on a $C^*$-algebra.
  \begin{enumerate}[(a)]
  \item The categories $\cat{cpct}(\rho)$, $\cat{cpct}(A)$ and $A\downarrow\cC^*_1$ are all locally presentable, hence also complete and cocomplete.
  \item The forgetful functors
    \begin{equation*}
      \cat{cpct}(\rho)
      \to
      \cat{cpct}(A)
      \to
      A\downarrow\cC^*_1
    \end{equation*}
    are all left adjoints and create colimits.
  \item In any of these categories, the epimorphisms are the surjections.
  \item And injective morphisms are equalizers (i.e. regular monomorphisms).
    \qedhere
  \end{enumerate}
\end{theoremN}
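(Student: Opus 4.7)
For part (a), my plan is to work up the chain, starting from the classical fact that $\cC^*_1$ is locally $\aleph_1$-presentable (unital $C^*$-algebras form an $\aleph_1$-accessible variety). Coslices of locally presentable categories remain locally presentable, giving $A\downarrow\cC^*_1$. I would then exhibit $\cat{cpct}(A)$ as an accessibly embedded full subcategory of $A\downarrow\cC^*_1$ cut out by the closure condition that $A\to B$ sit between $A$ and $M(A)$—expressible via orthogonality to a small class of morphisms. Finally, $\cat{cpct}(\rho)$ adds the equational datum of a $\bG$-action $B\to M(C_0(\bG)\otimes B)$ with the usual coherence axioms, which again preserves local presentability. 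Completeness and cocompleteness follow for free.

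For part (b), I would establish creation of colimits directly and then invoke the adjoint functor theorem in the presentable setting. Concretely, the colimit of a diagram of compactifications computed in $A\downarrow\cC^*_1$ should remain a compactification (the image of $A$ still generates, and the natural comparison into $M(A)$ persists via the universal property), and a coherent family of actions $\rho_i\colon B_i\to M(C_0(\bG)\otimes B_i)$ should assemble on $B=\mathrm{colim}\,B_i$ using functoriality of $C_0(\bG)\otimes-$ and the universal property of the colimit. Once the forgetful functors preserve colimits between locally presentable categories, the adjoint functor theorem supplies the right adjoints, proving they are left adjoints.

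For part (c), I reduce to the classical theorem that epimorphisms in $\cC^*_1$ are surjective. Left adjoints preserve pushouts and hence epimorphisms, so an epimorphism in $\cat{cpct}(\rho)$ descends through the forgetful chain into an epimorphism in $\cC^*_1$, which is therefore surjective; the converse is immediate. For part (d), a unital injection $f\colon B\hookrightarrow B'$ in $\cC^*_1$ equalizes the canonical pair $B'\rightrightarrows B'\amalg_B B'$ into the pushout. Because the forgetful functors preserve pushouts (being left adjoints), the analogous pair can be formed in $\cat{cpct}(\rho)$, and the equalizer there can be shown to match $f$ by tracking the underlying $C^*$-algebra structure through the faithful composite forgetful.

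The main obstacle I anticipate is the colimit-creation step at the top of the chain, $\cat{cpct}(\rho)\to\cat{cpct}(A)$. Extending a coherent family of actions to the colimit must pass through the multiplier algebra functor $M(-)$, which does not preserve colimits in general; nondegeneracy of the actions together with the fact that all $B_i$ sit inside the fixed ambient $M(A)$ should provide the control needed. A related subtlety is verifying that being ``sandwiched between $A$ and $M(A)$'' is genuinely colimit-stable—a structural question about how subobjects of the fixed $M(A)$ interact with colimits computed elsewhere in the ambient coslice.
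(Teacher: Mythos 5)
The skeleton of your argument matches the paper for parts (b) and (d) and gives a workable variant for (c), but there is a genuine gap at the heart of part (a): local presentability of the categories carrying a $\bG$-action (equivalently of $\cC^{*\bG}_1$, from which $\cat{cpct}(\rho)$ is obtained by comma constructions). You dismiss this as ``adds the equational datum of a $\bG$-action \dots which again preserves local presentability'', but the structure map lands in $M(C_0(\bG)\otimes B)$, and neither $C_0(\bG)\otimes-$ nor $B\mapsto M(C_0(\bG)\otimes B)$ is accessible in the way the standard preservation results require: the minimal tensor product preserves directed colimits of injections but not arbitrary directed colimits \cite[\S II.9.6.5]{blk}, and the multiplier-algebra construction is worse. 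So one cannot treat $\cC^{*\bG}_1$ as the category of models of an essentially algebraic theory (or of algebras over an accessible monad) over $\cC^*_1$ and quote a general theorem; this is precisely where the paper does its real work. It equips $\cC^{*\bG}_1$ with the (surjection, injection) factorization system (\Cref{pr:surjinj}), uses the equivalence of local presentability with $\cM$-local generation from \cite{ar-lg} --- which only requires directed colimits with \emph{injective} connecting maps --- and proves a L\"owenheim--Skolem-type statement (\Cref{le:univk}) that every $\bG$-$C^*$-algebra is the $\kappa$-directed union of small unital subalgebras that are invariant under $\rho$ and still satisfy the continuity condition \Cref{eq:aactcont}. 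Nothing in your outline substitutes for these steps. A related structural slip: $\cat{cpct}(A)$ is \emph{not} a full subcategory of $A\downarrow\cC^*_1$ cut out by an orthogonality condition, since the map $r:B\to M(A)$ is genuine extra data (morphisms must commute with it, and $r$ is not determined by $\ell$ in general); the paper instead writes $\cat{cpct}(\rho)\simeq\left(A^+\downarrow \cC^{*\bG}_1\right)\downarrow\left(A^+\to M(A)\right)$ and invokes stability of local presentability under comma categories \cite[Proposition 1.57]{ar}.

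On the remaining parts: your plan for (b) is the paper's (direct colimit creation as in \Cref{th:ccmpl}, then the adjoint functor theorem in the presentable setting); the obstacle you flag is milder than you fear because all algebras in sight are unital, so $\id\otimes\iota_d$ extends to multiplier algebras by nondegeneracy and the continuity condition passes to the colimit since the images $\iota_d(B_d)$ generate it --- no control via $M(A)$ is needed. For (c), descending epimorphisms all the way to $\cC^*_1$ and citing Reid/Hofmann--Neeb works, but note that the last step $A\downarrow\cC^*_1\to\cC^*_1$ is not one of the left adjoints from (b); epimorphisms still descend (given $g,h:C\to D$ with $gf=hf$, endow $D$ with the structure map $g\circ c_C=h\circ c_C$), whereas the paper stays inside $A\downarrow\cC^*_1$ and argues via ``injections are equalizers'' plus ``an epic equalizer is an isomorphism''. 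For (d), you need the full strength of \cite[Theorem 6]{hnb} --- that a unital injection \emph{is} the equalizer of its cokernel pair, not merely that it equalizes it --- and also that the factoring morphism produced in $A\downarrow\cC^*_1$ is again equivariant (which follows from injectivity of $\id\otimes\iota$ on multiplier algebras, \Cref{le:isfnc}); with those points made explicit, your argument coincides with the paper's.
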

Although on a first encounter {\it local presentability} (\Cref{def:lpres}) might appear a little technical, it reflects fairly reasonable intuition (that the category can be recovered via colimits from ``small'' objects) and is extremely valuable in practice in proving the kinds of universality results we are concerned with here: as the proofs of \Cref{cor:allgood,cor:ladj} make it clear for instance, local presentability delivers limits and adjoint functors essentially ``for free''.

In \Cref{se:lim} we focus on the forgetful functor $\cC^{*\bG}_1\to \cC^*_1$ from actions \Cref{eq:rhointro} on unital $C^*$-algebras to (again unital) $C^*$-algebras; it is recoverable from the previous discussion as 
\begin{equation*}
  \cat{cpct}(\rho)\to \cat{cpct}(A),
\end{equation*}
where $A$ is the zero algebra $\{0\}$. Much of \Cref{se:lim} specializes to {\it coamenable} $\bG$ (\cite[Definition 3.1]{bt} and \Cref{def:coam}). On the one hand, this is a non-trivial constraint; on another though, it is also an instance of the phenomenon noted above, of encountering purely non-commutative phenomena when generalizing results from classical point-set topology: ordinary locally compact groups are all coamenable, so the issue was invisible in the classical setting.

The main results there are (\Cref{th:finlim,th:coamcomon} and \Cref{cor:moninj}):

\begin{theoremN}
  Let $\bG$ be a regular coamenable locally compact quantum group.
  \begin{enumerate}[(a)]
  \item The forgetful functor $\cC^{*\bG}_1\to \cC^*_1$ creates finite limits and is comonadic.
  \item And the monomorphisms in $\cC^{*\bG}_1$ are the injections.  \qedhere
  \end{enumerate}
\end{theoremN}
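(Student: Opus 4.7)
I would prove both parts together, reducing them to Beck's comonadicity theorem applied to the forgetful $U\colon \cC^{*\bG}_1\to \cC^*_1$. The hypotheses to verify are that $U$ has a right adjoint, reflects isomorphisms, and creates equalizers of $U$-split pairs; I will establish the stronger property that $U$ creates all finite limits, which subsumes the third condition. Once (a) is in hand, (b) follows at once: $U$ then preserves finite limits and in particular monomorphisms, so a mono $f$ in $\cC^{*\bG}_1$ maps to a mono in $\cC^*_1$; monos in $\cC^*_1$ are known to be injective (tested, for instance, against the universal unital $C^*$-algebra generated by a single self-adjoint contraction), and $U$ reflects injectivity trivially.

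For the right adjoint I would take
\[
  R(B) := M(C_0(\bG)\otimes B),
\]
endowed with the coaction obtained by extending $\Delta\otimes\id_B$ strictly to multipliers. The unit of the adjunction at $A$ is the coaction $\rho_A\colon A\to RU(A)$, and the counit at $B$ is the slice map $\epsilon\otimes\id_B\colon R(B)\to B$, with $\epsilon\colon C_0(\bG)\to\bC$ the bounded counit provided by coamenability. The triangle identities reduce to the standard counit axiom $(\id\otimes\epsilon)\Delta=\id$ and its coaction analogue $(\epsilon\otimes\id)\rho_A=\id_A$; the latter is obtained by observing that $\phi:=(\epsilon\otimes\id)\rho_A$ is a $*$-endomorphism of $A$ that is idempotent by coassociativity and the counit axiom, and has dense image by Podle\'s density---hence $\phi=\id_A$. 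Reflection of isomorphisms is immediate, since the inverse of a bijective equivariant morphism is equivariant by a direct diagram chase.

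The substantive content lies in creating finite limits. Given a finite diagram $D\colon J\to \cC^{*\bG}_1$ with $L=\lim(U\circ D)$ in $\cC^*_1$ and projections $\pi_j\colon L\to D(j)$, equivariance forces $(\id\otimes\pi_j)\rho_L=\rho_{D(j)}\pi_j$, which characterises $\rho_L$ uniquely if it exists. Existence amounts to the assertion that $B\mapsto M(C_0(\bG)\otimes B)$ preserves finite limits of unital $C^*$-algebras. For products this is the elementary block decomposition $M(C_0(\bG)\otimes(B_1\times B_2))\cong M(C_0(\bG)\otimes B_1)\times M(C_0(\bG)\otimes B_2)$; for equalizers it reduces to exactness of the minimal tensor product against $C_0(\bG)$ combined with the compatibility of multiplier algebras with sub-$C^*$-algebra inclusion, the latter controlled by regularity of $\bG$. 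Coassociativity of the resulting $\rho_L$ is then forced by its defining property, leaving only the Podle\'s density axiom to check.

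The chief obstacle I anticipate is precisely this density transfer for equalizers, where the equalizer subalgebra $L\subseteq D(j)$ is neither an ideal nor a corner in $D(j)$ and so multiplier algebras do not manifestly cooperate. I would handle this by slicing across $C_0(\bG)$: for $b\in L$ and $\omega\in C_0(\bG)^*$ the slice $(\omega\otimes\id)\rho_{D(j)}(b)$ already lies in $L$ by equivariance, and regularity of $\bG$ promotes this pointwise information to the multiplier-algebra-level containment $\rho_L(L)\subseteq M(C_0(\bG)\otimes L)$, while coamenability (via $\epsilon$) pins down the unital normalisation. Having secured the right adjoint, reflection of isomorphisms, and creation of finite limits, Beck's theorem delivers comonadicity of $U$ and thus (a); and (b) follows from (a) by the opening paragraph.
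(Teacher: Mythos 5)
Your overall skeleton (Beck's comonadicity theorem via a right adjoint, reflection of isomorphisms, and creation of finite limits, then deducing (b) from preserved pullbacks) is the same as the paper's, but two of your three inputs have genuine problems. The most serious is the explicit right adjoint: $R(B):=M(C_0(\bG)\otimes B)$ with the strict extension of $\Delta\otimes\id_B$ is in general \emph{not} an object of $\cC^{*\bG}_1$. The extension of $\Delta\otimes\id_B$ lands in $M(C_0(\bG)\otimes C_0(\bG)\otimes B)$, which properly contains the required codomain $M\bigl(C_0(\bG)\otimes M(C_0(\bG)\otimes B)\bigr)$ (see \Cref{re:mma}), and, more fatally, the continuity axiom \Cref{eq:aactcont} fails: already for $\bG=\bR$ and $B=\bC$ one gets $C_b(\bR)$ with the translation coaction, which is not point-norm continuous. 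This failure of actions to survive passage to multiplier algebras is precisely the phenomenon motivating the paper, so $M(C_0(\bG)\otimes B)$ cannot serve as the cofree object, and your triangle-identity verification has nothing to attach to. A right adjoint does exist, but the paper obtains it abstractly: $\cC^{*\bG}_1$ is locally presentable (\Cref{th:allpres}), the forgetful functor is cocontinuous (\Cref{cor:ccmpl-noa}), and the dual Special Adjoint Functor Theorem applies (\Cref{pr:saft}, \Cref{cor:ladj}).

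The second problem is the step you yourself flag as the chief obstacle, the Podle\'s density condition for the coaction restricted to an equalizer: it is never actually proved, and the roles you assign to the hypotheses are off. In the paper, the containment $\rho_B(A)\subseteq M(C_0(\bG)\otimes A)$ comes from exactness of $C_0(\bG)$ (coamenability gives nuclearity, hence exactness), via the fact that $C\otimes-$ and $A\mapsto M_A(C\otimes A)$ preserve equalizers (\Cref{le:finlim}); regularity plays no role there. Regularity enters only to rescue the density axiom: coamenability yields the counit identity $(\varepsilon\otimes\id)\rho=\id$ for any action (\Cref{pr:coun}), this identity passes to the restriction, and then a cited result of Baaj--Skandalis--Vaes (used through \Cref{pr:coun} and \Cref{le:resact}) says that for \emph{regular} $\bG$ a non-degenerate comodule-type map satisfying the counit identity is automatically an action, density included. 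Your slicing remark and the phrase ``coamenability pins down the unital normalisation'' do not substitute for this input, and some such input is needed: the paper points to a semi-regular example where the counit identity holds but the density condition fails. By contrast, your derivation of (b) from created finite limits, and your verification of $(\varepsilon\otimes\id)\rho_A=\id_A$, match the paper's arguments and are fine.
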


As explained in \Cref{subse:cmnd}, the comonadicity claim formalizes the intuition that a non-commutative $\bG$-space is precisely a non-commutative space equipped with additional ``algebraic operations'', and that the forgetful functor forgets that additional algebraic structure.

%%%%%%%%%%%%%%%%%%%%%%%%%%%%%%%%%%%%%%%%%%%%%%%%%%%%%%%%%%%%%%%%%%%%%%%%%%%%%
\subsection*{Acknowledgements}

This work was partially supported through NSF grant DMS-2001128. 

I am grateful for very constructive comments from A. Viselter. 

%%%%%%%%%%%%%%%%%%%%%%%%%%%%%%%%%%%%%%%%%%%%%%%%%%%%%%%%%%%%%%%%%%%%%%%%%%%%%
%%%%%%%%%%%%%%%%%%%%%%%%%%%%%%%%%%%%%%%%%%%%%%%%%%%%%%%%%%%%%%%%%%%%%%%%%%%%%
\section{Preliminaries}\label{se.prel}

We make casual use of operator-algebraic background, as available, say, in \cite{blk,ped-aut,tak1} and numerous other good sources; the references are more precise where appropriate. References for locally compact quantum groups include \cite{kvcast,bs-cross,mrw}, again with more specific citations in the text below.

The unadorned tensor-product symbol `$\otimes$', placed between $C^*$-algebras, denotes the {\it minimal} or {\it spatial} $C^*$ tensor product \cite[\S II.9.1.3]{blk}. We will, on occasion, also refer to the {\it maximal} $C^*$ tensor product $\overline{\otimes}$ \cite[\S II.9.2]{blk}.

%%%%%%%%%%%%%%%%%%%%%%%%%%%%%%%%%%%%%%%%%%%%%%%%%%%%%%%%%%%%%%%%%%%%%%%%%%%%%
\subsection{Multipliers and categories of $C^*$-algebras}\label{subse:catcast}

A morphism between unital $C^*$-algebras can only, reasonably, mean one thing: a continuous, unital, $*$-preserving, multiplicative and linear map. We denote the resulting category by $\cC^*_1$, to remind the reader of the unitality.

{\it Non}-unital $C^*$-algebras form categories in a number of competing ways (all collapsing back to $\cC^*_1$ in the unital case):
\begin{enumerate}[(a)]
\item\label{item:1} $\cC^*$, where morphisms are as in $\cC^*_1$, simply dropping the unitality requirement (which no longer makes sense). This category is not frequently useful as-is, so there are variants to consider.
\item\label{item:2} $\underline{\cC^*}$, where the morphisms are those of $\cC^*$ which are additionally {\it proper} \cite[\S 2.1]{elp}: $f:A\to B$ is proper if
  \begin{equation}\label{eq:ndg}
    \overline{f(A)B} = B = \overline{B f(A)}. 
  \end{equation}
  As explained on \cite[p.80]{elp}, in the classical case (i.e. for commutative $C^*$-algebras) these dualize precisely to the proper continuous maps between locally compact Hausdorff spaces.
\item\label{item:3} $\overline{\cC^*}$ (the most useful version in quantum-group theory, it seems), where a morphism $A\to B$ is by definition a {\it non-degenerate} morphism $A\to M(B)$ into the multiplier algebra of $B$, where recall \cite[p.15]{lnc-hilb} that non-degeneracy is here the selfsame condition \Cref{eq:ndg}.

  The set of such morphisms $A\to B$ in $\overline{\cC^*}$ is often denoted by $\mathrm{Mor}(A,B)$: \cite[\S 0]{wor-aff}, \cite[Definition A.4]{mnw}, \cite[\S 1.1]{dkss}, etc. Such a morphism extends uniquely \cite[Proposition 2.5]{lnc-hilb} to a unital $C^*$-morphism $M(A)\to M(B)$ that is furthermore strictly continuous on the unit ball of $M(A)$ (or equivalently \cite[Corollary 2.7]{taylor}, just plain strictly continuous). This then allows for composition of morphisms.

  We will usually denote extensions $M(A)\to M(B)$ of non-degenerate $f:A\to M(B)$ by the same symbol, relying on context for telling the two apart.
\end{enumerate}

\begin{remark}\label{re:cats}
  Note that the proper morphisms $A\to B$ are precisely the non-degenerate morphisms $A\to M(B)$ that actually take values in $B$. This means that we have an inclusion
  \begin{equation*}
    \underline{\cC^*} \subset \overline{\cC^*}
  \end{equation*}
  of categories. Neither, though, is comparable with $\cC^*$.
\end{remark}

It will also pay off to introduce notation that singles out certain subalgebras of multiplier algebras. In \cite[paragraph preceding D\'efinition 0.1]{bs-cross} the authors of that paper write
\begin{equation*}
  M(A;\ J) := \{x\in M(A)\ |\ xJ+Jx\subseteq J\}
\end{equation*}
for an ideal $J\trianglelefteq A$. As noted there, in addition to being a $C^*$-subalgebra of $M(A)$ by definition, $M(A;\ J)$ can also be regarded as a $C^*$-subalgebra of $M(J)$ via the multiplier-restriction morphism
\begin{equation*}
  M(A;\ J)\subseteq M(A)\to M(J).
\end{equation*}
The claim here is that this composition is one-to-one; we leave that as an exercise, and henceforth take it for granted.

Of particular interest, in defining actions \cite[D\'efinition 0.2]{bs-cross}, will be algebras of the form
\begin{equation*}
  M(B\otimes A^+;\ B\otimes A) = M(B\otimes \widetilde{A};\ B\otimes A),
\end{equation*}
where $A^+$ and $\widetilde{A}$ are as in \cite[Propositions 2.1.3 and 2.1.7]{wo}:
\begin{itemize}
\item $A^+=A\oplus \bC$ with $A$ as an ideal and $(0,1)$ as the unit;
\item $\widetilde{A}$ is the {\it smallest unitization} of $A$, i.e. $A$ itself if the latter was already unital and $A^+$ otherwise. 
\end{itemize}

There is a tradition (e.g. \cite[paragraph following Lemma 1.1]{lprs} or \cite[\S 1]{qg-full}) of writing 
\begin{equation*}
  \widetilde{M}(A\otimes B) = M(\widetilde{A}\otimes B;\ A\otimes B).  
\end{equation*}
Since this paper's conventions demand that the affected tensorand $A$ be placed on the {\it right} (rather than left), we decorate `$M$'with that tensorand as a subscript instead, to the notation. Thus:
\begin{align*}
  M_A(A\otimes B) &:= M(\widetilde{A}\otimes B;\ A\otimes B) = M(A^+\otimes B;\ A\otimes B) ,\\
  M_A(B\otimes A) &:= M(B\otimes \widetilde{A};\ B\otimes A) = M(B\otimes A^+;\ B\otimes A),\numberthis\label{eq:msub}\\
\end{align*}
etc.

\begin{remark}\label{re:when1}
  Naturally, when $A$ is unital $M_A(B\otimes A)$ specializes back to the usual multiplier algebra $M(B\otimes A)$.
\end{remark}

Although, as mentioned above, the $C^*$ categories $\underline{\cC^*}$ and $\overline{\cC^*}$ tend to be more useful in practice than $\cC^*$, the latter does feature in the following observation.

\begin{lemma}\label{le:isfnc}
  For a fixed $C^*$-algebra $C$, the construction
  \begin{equation}\label{eq:isfnc}
    A\mapsto M_A(C\otimes A)
  \end{equation}
  is an endofunctor on $\cC^*$.

  Furthermore, that functor preserves morphism injectivity.
\end{lemma}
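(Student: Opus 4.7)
The plan is to lift any morphism $f:A\to B$ of $\cC^*$ to a morphism between the multiplier algebras of the unitized tensor products. Extend $f$ canonically to the unital $*$-morphism $f^+:A^+\to B^+$, $(a,\lambda)\mapsto(f(a),\lambda)$, and tensor with $\id_C$ to obtain $\id_C\otimes f^+:C\otimes A^+\to C\otimes B^+$. Since $f^+$ is unital, $(\id_C\otimes f^+)(C\otimes A^+)\supseteq C\otimes 1_{B^+}$, and therefore $(\id_C\otimes f^+)(C\otimes A^+)\cdot(C\otimes B^+)$ is dense in $C\otimes B^+$. The resulting non-degeneracy of $\id_C\otimes f^+$ guarantees a unique unital $*$-extension
\[
\wt f:M(C\otimes A^+)\to M(C\otimes B^+),
\]
strictly continuous on the unit ball.

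Because $C\otimes A$ is a closed two-sided ideal of $C\otimes A^+$, every multiplier of $C\otimes A^+$ automatically stabilizes $C\otimes A$, so the sandwich condition defining $M_A(C\otimes A)=M(C\otimes A^+;\ C\otimes A)$ is vacuous and this algebra coincides with $M(C\otimes A^+)$ (analogously for $B$). Hence $\wt f$ itself supplies the desired morphism $M_A(C\otimes A)\to M_B(C\otimes B)$, which via the injective restrictions into $M(C\otimes A)$ and $M(C\otimes B)$ mentioned just before the lemma also acts on the corresponding subalgebras of those multiplier algebras. Functoriality — preservation of identities and composition — then follows from the functoriality of $A\mapsto A^+$ and of $C\otimes(-)$, combined with the uniqueness clause in the multiplier extension of a non-degenerate morphism.

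Suppose now that $f$ is injective. Then $f^+$ is injective, being $f$ on the ideal $A$ and the identity on the scalar summand of $A^+=A\oplus\bC$; since the minimal $C^*$-tensor product preserves injective $*$-homomorphisms, $\id_C\otimes f^+$ is injective as well. If $m\in M(C\otimes A^+)$ satisfies $\wt f(m)=0$, then for every $x\in C\otimes A^+$ one has
\[
(\id_C\otimes f^+)(mx)=\wt f(m)\cdot(\id_C\otimes f^+)(x)=0,
\]
so $mx=0$; letting $x$ run through an approximate identity of $C\otimes A^+$ forces $m=0$ in the strict topology. Hence $\wt f$, and with it its restriction to $M_A(C\otimes A)$, is injective. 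The only genuine (if modest) point to take care of is securing non-degeneracy of $\id_C\otimes f^+$ uniformly, with no assumption of unitality, non-degeneracy, or properness on $f$; passing to the non-minimal unitization $A^+$, which renders $f^+$ unital, accomplishes exactly this.
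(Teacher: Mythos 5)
The standard ingredients in your argument are fine and close to the paper's: unitizing $f$ to $f^+\colon A^+\to B^+$, extending the nondegenerate $\id_C\otimes f^+$ strictly to $M(C\otimes A^+)\to M(C\otimes B^+)$, and deducing injectivity of the extension from injectivity of $\id_C\otimes f^+$ via an approximate unit. The gap is the sentence asserting that the condition defining $M_A(C\otimes A)=M(C\otimes A^+;\ C\otimes A)$ is vacuous, so that this algebra is all of $M(C\otimes A^+)$. It is true that every multiplier of $C\otimes A^+$ stabilizes the closed two-sided ideal $C\otimes A$, but the condition intended by the notation $M(X;J)$ (from \cite{bs-cross}; the displayed formula in \cref{subse:catcast} garbles it) is $xX+Xx\subseteq J$, not the automatic $xJ+Jx\subseteq J$. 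Concretely, $M_A(C\otimes A)$ consists of those multipliers $x$ with $x(c\otimes 1)$ and $(c\otimes 1)x$ in $C\otimes A$ for all $c\in C$, and this is genuinely restrictive: \Cref{re:when1} says that for unital $A$ one gets $M(C\otimes A)$ (not $M(C\otimes A^+)$), and condition (b) of \Cref{def:act} uses membership in $M_A(C_0(\bG)\otimes A)$ precisely to encode the nontrivial containment $\rho(A)(C_0(\bG)\otimes\bC)\subseteq C_0(\bG)\otimes A$. So the identification $M_A(C\otimes A)=M(C\otimes A^+)$ is false under the intended reading; relatedly, your appeal to an ``injective restriction'' into $M(C\otimes A)$ applied to all of $M(C\otimes A^+)$ fails when $A$ is unital, since then $C\otimes A$ is not essential in $C\otimes A^+$.

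Because of this shortcut, your proof never verifies the one substantive point of the lemma: that the strict extension $\wt f$ carries $M_A(C\otimes A)$ into $M_B(C\otimes B)$. This is exactly where the paper's proof does its work: for $x\in M_A(C\otimes A)$ and $c\in C$, the module property of the extension gives $\wt f(x)(c\otimes 1)=(\id\otimes f)\bigl(x(c\otimes 1)\bigr)\in(\id\otimes f)(C\otimes A)\subseteq C\otimes B$, and similarly on the other side, so $\wt f(x)\in M_B(C\otimes B)$. As written, your argument establishes functoriality and injectivity for the different assignment $A\mapsto M(C\otimes A^+)$ rather than for \Cref{eq:isfnc} as it is used throughout the paper; adding the two-line computation above (and running your injectivity argument, or the paper's essential-ideal argument, on the restriction to $M_A(C\otimes A)$) repairs it.
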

\begin{proof}
  We just focus on how the functor applies to morphisms; the fact that it respects compositions and identities will then be routine.

  Consider a $C^*$-morphism $f:A\to B$ (in $\cC^*$; no non-degeneracy assumptions, etc.). It then extends \cite[\S II.1.2.3]{blk} uniquely to a unital morphism
  \begin{equation*}
    \id\otimes f: C\otimes A^+\to C\otimes \widetilde{B}
  \end{equation*}
  which is proper in the sense of \Cref{item:2} in the above discussion on categories. This means that it extends strictly continuously to a unital morphism
  \begin{equation}\label{eq:isfnc+}
    \id\otimes f: M(C\otimes A^+)\to M(C\otimes \widetilde{B}),
  \end{equation}
  and finally to \Cref{eq:isfnc} because this last extension has the requisite multiplier property: for $x\in M_A(C\otimes A^+)$ and $c\in C$ we have
  \begin{align*}
    (\id\otimes f)(x)(c\otimes 1) &= (\id\otimes f)(x(c\otimes 1)) \\
                                  &\in (\id\otimes f)(C\otimes A)
                                    \quad\text{because $x\in M_A(C\otimes A^+)$}\\
                                  &\subseteq C\otimes B.\\
  \end{align*}
  As this happens for arbitrary $c\in C$, \Cref{eq:isfnc+} indeed restricts to a morphism
  \begin{equation*}
    M_A(C\otimes A)\to M_B(C\otimes B).
  \end{equation*}
  As for the injectivity-preservation claim, recall first \cite[Proposition IV.4.22]{tak1} that
  \begin{equation*}
    f:A\to B\text{ injective }\Rightarrow\ \id\otimes f:C\otimes A\to C\otimes B\text{ is injective}.
  \end{equation*}
  The conclusion then follows from the fact that the latter morphism extends to $M_A(C\otimes A)$, wherein $C\otimes A$ is an {\it essential} ideal: the kernel of that extension, if non-trivial, would intersect $C\otimes A$ non-trivially.
\end{proof}

\begin{remark}
  It is observed in \cite[Lemma 1.4]{lprs} that for $C^*$-algebras $A$ and $C$ and an ideal $I\trianglelefteq A$ the restriction morphism
  \begin{equation*}
    M(C\otimes A)\to M(C\otimes I)
  \end{equation*}
  identifies the ideal
  \begin{equation*}
    \{m\in M_A(C\otimes A)\ |\ mx,\ xm\in C\otimes I,\ \forall x\in C\otimes A\}\subseteq M_A(C\otimes A)
  \end{equation*}
  with $M_I(C\otimes I)\subseteq M(C\otimes I)$. That identification (or rather its inverse) is easily seen to be precisely the embedding
  \begin{equation*}
    M_I(C\otimes I)\to M_A(C\otimes A)
  \end{equation*}
  provided by \Cref{le:isfnc}.
\end{remark}

A few further remarks on multiplier algebras follow, for occasional future reference.

Recall \cite[\S 1.4]{wo} that an {\it essential} ideal in a $C^*$-algebra is one having non-zero intersection with every non-zero ideal. A well-known result, recorded here for future reference.

\begin{lemma}\label{le:ess}
  If $I\trianglelefteq A$ and $J\trianglelefteq B$ are essential $C^*$-ideals then so is $I\otimes J\trianglelefteq A\otimes B$.

  Equivalently, the morphism
  \begin{equation}\label{eq:abmij}
    A\otimes B\to M(I\otimes J)
  \end{equation}
  obtained by realizing $A\otimes B$ as an algebra of multipliers on $I\otimes J$ is one-to-one.
\end{lemma}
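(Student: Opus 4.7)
The two formulations in the statement are equivalent on general $C^*$-algebraic grounds: for any ideal $K\trianglelefteq R$, the kernel of the canonical morphism $R\to M(K)$ is exactly the annihilator $\{r\in R\mid rK=Kr=0\}$, and one checks easily (using that $K$ itself is $*$-closed) that $K$ is essential iff this annihilator vanishes. So it suffices to prove injectivity of \Cref{eq:abmij}.

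The plan is to argue representation-theoretically. First I would produce faithful $*$-representations $\pi_1:A\to\cB(H_1)$ and $\pi_2:B\to\cB(H_2)$ whose restrictions $\pi_1|_I$ and $\pi_2|_J$ are nondegenerate. To build $\pi_1$, choose any faithful nondegenerate representation $\sigma:I\to\cB(H_1)$, extend $\sigma$ canonically to $M(I)$ (possible because $\sigma$ is nondegenerate), and restrict the extension along the embedding $A\hookrightarrow M(I)$ afforded by the essentiality of $I$. The resulting $\pi_1$ agrees with $\sigma$ on $I$, so is nondegenerate there, and it is faithful on $A$: $\pi_1(a)=0$ forces $\sigma(ai)=\pi_1(a)\sigma(i)=0$ for every $i\in I$, hence $aI=0$, hence $a=0$ by essentiality. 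The symmetric construction yields $\pi_2$.

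With these in hand, the minimal-tensor-product representation $\pi_1\otimes\pi_2:A\otimes B\to\cB(H_1\otimes H_2)$ is faithful by the same preservation-of-injectivity property of the minimal tensor product (\cite[Proposition IV.4.22]{tak1}) already invoked at the end of the proof of \Cref{le:isfnc}. Now if $x\in A\otimes B$ annihilates $I\otimes J$ on the right, then $(\pi_1\otimes\pi_2)(x)$ kills every vector of the form $\pi_1(i)\xi_1\otimes\pi_2(j)\xi_2$ with $i\in I$, $j\in J$, $\xi_k\in H_k$. The linear span of such vectors has closure all of $H_1\otimes H_2$ by the nondegeneracy of $\pi_1|_I$ and $\pi_2|_J$, so $(\pi_1\otimes\pi_2)(x)=0$, and therefore $x=0$ by faithfulness. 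Hence the kernel of \Cref{eq:abmij} is trivial.

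The main subtlety, rather than a genuine obstacle, is the initial choice of representation: one must ensure the canonical extension $M(I)\to\cB(H_1)$ remains faithful on the subalgebra $A\subseteq M(I)$, which is exactly where the essentiality hypothesis on $I$ enters decisively. Everything downstream---injectivity of the minimal tensor product, and the density of the span of $\pi_1(I)H_1\otimes\pi_2(J)H_2$ in $H_1\otimes H_2$---is then routine.
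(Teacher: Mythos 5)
Your proof is correct, but it takes a genuinely different route from the paper's. The paper disposes of the equivalence of the two formulations by citing \cite[Proposition 2.2.14]{wo}, invokes \cite[Proposition 5.1]{skf} as a black box for the special case $A=M(I)$, $B=M(J)$ (i.e.\ injectivity of $M(I)\otimes M(J)\to M(I\otimes J)$), and then concludes by writing \Cref{eq:abmij} as the composition of injections $A\otimes B\subseteq M(I)\otimes M(J)\subseteq M(I\otimes J)$, the first inclusion coming from essentiality plus the injectivity-preservation of the minimal tensor product \cite[Proposition IV.4.22]{tak1}. You instead give a self-contained spatial argument: faithful representations of $A$ and $B$ that restrict nondegenerately to $I$ and $J$, faithfulness of $\pi_1\otimes\pi_2$ on the minimal tensor product, and density of the span of $\pi_1(I)H_1\otimes\pi_2(J)H_2$. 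This in effect reproves the cited multiplier-algebra result rather than quoting it (indeed, your density argument is essentially how such statements are proved), so your version is longer but independent of \cite{skf}, and it makes visible exactly where essentiality is used (faithfulness of the restricted extension $A\subseteq M(I)\to\cB(H_1)$); the paper's version is shorter at the cost of outsourcing the key step. One cosmetic remark: the kernel of \Cref{eq:abmij} is the two-sided annihilator of $I\otimes J$, and you only verify that the one-sided annihilator vanishes---which suffices, since the former is contained in the latter, but it is worth saying so explicitly; also, the faithfulness of $\pi_1\otimes\pi_2$ is really the defining property of the spatial tensor product (independence of the choice of faithful representations) rather than literally \cite[Proposition IV.4.22]{tak1}, though the two are of course closely related.
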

\begin{proof}
  That the two claims re equivalent is part of \cite[Proposition 2.2.14]{wo}. \cite[Proposition 5.1]{skf} proves the statement for $A=M(I)$ and $B=M(J)$. $A\to M(I)$ and $B\to M(J)$ are embeddings by the same \cite[Proposition 2.2.14]{wo} and minimal $C^*$ tensor products preserve map injectivity \cite[Proposition IV.4.22]{tak1}, so the injectivity of \Cref{eq:abmij} follows from writing as a composition of two injective maps
  \begin{equation*}
    A\otimes B \subseteq M(I)\otimes M(J)\subseteq M(I\otimes J). 
  \end{equation*}
\end{proof}

In particular, applying \Cref{le:ess} to the essential ideals $A\trianglelefteq M(A)$ and $B\trianglelefteq B$ we obtain

\begin{corollary}\label{cor:mmab}
  For $C^*$-algebras $A$ and $B$ the canonical morphism
  \begin{equation}\label{eq:mabab}
    M(M(A)\otimes B)\to M(A\otimes B)
  \end{equation}
  resulting from realizing $M(A)\otimes B$ as an algebra of multipliers on $A\otimes B$ is one-to-one.  \qedhere
\end{corollary}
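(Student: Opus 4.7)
The plan is to reduce this to the injectivity already supplied by \Cref{le:ess} and then propagate injectivity upward to the multiplier algebra via a standard essential-ideal argument.

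First I would apply \Cref{le:ess} to the essential ideals $A \trianglelefteq M(A)$ and $B \trianglelefteq B$, which yields an injective $*$-morphism
\[
  \phi : M(A) \otimes B \hookrightarrow M(A \otimes B).
\]
This $\phi$ is non-degenerate, because its restriction to $A \otimes B \subseteq M(A) \otimes B$ is the canonical (non-degenerate) inclusion $A \otimes B \hookrightarrow M(A \otimes B)$. The universal property of multiplier algebras therefore extends $\phi$ uniquely to a unital $*$-morphism
\[
  \widetilde{\phi} : M(M(A) \otimes B) \to M(A \otimes B),
\]
and $\widetilde{\phi}$ is precisely the canonical morphism appearing in the statement.

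To finish, I would observe that $\ker \widetilde{\phi}$ is a closed two-sided ideal of $M(M(A) \otimes B)$ whose intersection with the subalgebra $M(A) \otimes B$ equals $\ker \phi = 0$. Since $M(A) \otimes B$ is an essential ideal of its own multiplier algebra $M(M(A) \otimes B)$, and essential ideals cannot be annihilated by non-zero ideals, it follows that $\ker \widetilde{\phi} = 0$.

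The only subtle point is verifying that $\widetilde{\phi}$ really does restrict to $\phi$ on $M(A) \otimes B$, but this is immediate from the uniqueness clause in the extension property of multipliers. Otherwise the argument is essentially the two-step transitivity of essentiality $A \otimes B \trianglelefteq M(A) \otimes B \trianglelefteq M(M(A) \otimes B)$, with \Cref{le:ess} supplying the first link and the defining property of the multiplier algebra supplying the second.
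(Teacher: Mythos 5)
Your proof is correct and takes essentially the same route as the paper: both rest on applying \Cref{le:ess} to the essential ideals $A\trianglelefteq M(A)$ and $B\trianglelefteq B$ to get the embedding $M(A)\otimes B\hookrightarrow M(A\otimes B)$. You merely spell out the step the paper leaves implicit, namely extending this non-degenerate embedding to $M(M(A)\otimes B)$ and using that $M(A)\otimes B$ is essential in its own multiplier algebra to conclude the extension has trivial kernel, which is exactly the intended (standard) completion of the argument.
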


\begin{remark}\label{re:mma}
  The discussion in \cite[\S 3.1, second paragraph]{vrgn-phd} traces the general failure of an action $A\circlearrowleft \bG$ to extend to $M(A)\circlearrowleft \bG$ to the fact that in general, \Cref{eq:mabab} is a {\it proper} embedding. Recall the distinction between the two sides in the ``half-classical'' case when $B=C_0(\bX)$ for some locally compact space $\bX$:
  \begin{itemize}
  \item $M(A\otimes B)=M(C_0(\bX,A))$ is the algebra of bounded, strictly continuous functions $\bX\to M(A)$ \cite[Corollary 3.4]{apt};
  \item whereas by the same result, $M(M(A)\otimes B)=M(C_0(\bX,M(A)))$ is the algebra of bounded {\it norm}-continuous functions $\bX\to M(A)$.
  \end{itemize}
  Since the latter, norm-continuity requirement is generally stronger than strict continuity, \Cref{eq:mabab} will indeed generally be a proper embedding.
\end{remark}

%%%%%%%%%%%%%%%%%%%%%%%%%%%%%%%%%%%%%%%%%%%%%%%%%%%%%%%%%%%%%%%%%%%%%%%%%%%%%
\subsection{Actions}\label{subse:act}

\begin{definition}\label{def:act}
  Let $A$ be a $C^*$-algebra and $\bG$ a locally compact quantum group. An {\it action} $A \circlearrowleft \bG$ is a non-degenerate morphism $\rho:A\to M(C_0(\bG)\otimes A)$ such that
  \begin{enumerate}[(a)]
  \item\label{item:6} $(\id\otimes\rho)\rho = (\Delta_{\bG}\otimes\id)\rho$ as morphisms $A\to M(C_0(\bG)^{\otimes 2}\otimes A)$;
  \item\label{item:7} $\rho$ takes values in $M_A(C_0(\bG)\otimes A)$ (notation as in \Cref{eq:msub}), so that
    \begin{equation*}
      \rho(A)(C_0(\bG)\otimes \bC)\subseteq C_0(\bG)\otimes A;
    \end{equation*}
  \item\label{item:8} we have
    \begin{equation}\label{eq:aactcont}
      \overline{\rho(A)(C_0(\bG)\otimes \bC)}^{\|\cdot \|} = C_0(\bG)\otimes A.
    \end{equation}
  \end{enumerate}
  We will occasionally also denote such a gadget by $\rho:A \circlearrowleft \bG$.
\end{definition}

\begin{remark}
  The notation $A\circlearrowleft \bG$ is meant to suggest the left-right switch that occurs every time we pass from a space to its associated function algebra, and from an action to a coaction. Thus:
  \begin{itemize}
  \item a {\it left} coaction $\rho:A\to M(C_0(\bG)\otimes A)$ as in \Cref{def:act};
  \item counts as a {\it right} action of $\bG$ on $A$;
  \item and hence a {\it left} action of $\bG$ on the ``noncommutative space'' dual to $A$.
  \end{itemize}
\end{remark}

% % \begin{remark}\label{re:cohen}
% %   Assuming \Cref{eq:aactcont} holds, it can be strengthened to
% %   \begin{equation}\label{eq:chn}
% %     \rho(A)(C_0(\bG)\otimes \bC) = C_0(\bG)\otimes A
% %   \end{equation}
% %   by the {\it Cohen factorization theorem} (\cite[Theorem (32.22)]{hr2} or \cite[\S 1, Theorem]{lprs}). In other words, taking the norm-closure is not necessary.
% %   
% %   Furthermore, by Cohen's theorem, in \Cref{eq:chn} we can interpret the juxtaposition in either of the two possible ways: as either implying the span or not. If the {\it linear span} of the products
% %   \begin{equation*}
% %     \rho(a)(x\otimes 1),\ a\in A,\ x\in C_0(\bG)
% %   \end{equation*}
% %   is dense in $C_0(\bG)\otimes A$ then the latter algebra equals the {\it set} of such products (no need to take a span). 
% % \end{remark}
% %

\begin{remark}\label{re:ndegauto}
  The non-degeneracy requirement in \Cref{def:act} might be needed to make sense of condition \Cref{item:6} (so as to be able to define the relevant compositions, etc.), but assuming conditions \Cref{item:7} and \Cref{item:8}, non-degeneracy {\it follows}:
  \begin{align*}
    C_0(\bG)\otimes A &= C_0(\bG)\otimes A^2
                        \quad\text{by {\it Cohen factorization} \cite[Theorem (32.22)]{hr2}: $A^2=A$}\\
                      &\subseteq \overline{\rho(A)(C_0(\bG)\otimes \bC)(\bC\otimes A)}^{\|\cdot\|}
                        \quad\text{by \Cref{eq:aactcont}}\\
                      &\subseteq \overline{\rho(A)(C_0(\bG)\otimes A)}^{\|\cdot\|},
  \end{align*}
  as desired.
  
  For this reason, when checking candidate actions against \Cref{def:act}, we will not belabor degeneracy much.
\end{remark}

\begin{example}\label{ex:regred}
The comultiplication
\begin{equation*}
  \Delta_{\bG}:C_0(\bG)\to M(C_0(\bG)\otimes C_0(\bG))
\end{equation*}
is the {\it regular} action $C_0(\bG)\circlearrowleft \bG$. That it is indeed an action follows, for instance, from \cite[Proposition 5.1]{wor-mult} and \cite[Proposition 6.10]{kvcast}.
\end{example}

\begin{example}\label{ex:reguniv}
  Similarly, the {\it universal} function algebra $C_0^u(\bG)$ ($A_u$ in \cite[\S 5]{kus-univ}, where $A$ is our $C_0(\bG)$) also comes equipped with a $\bG$-action
  \begin{equation*}
    (\pi\otimes\id)\Delta^u_{\bG}:C_0^u(\bG)\to M(C_0(\bG)\otimes C_0^u(\bG)).
  \end{equation*}
  Here,
  \begin{equation*}
    \Delta_{\bG}^u: C_0^u(\bG)\to M(C_0^u(\bG)\otimes C_0^u(\bG))
  \end{equation*}
  is the universal version of the comultiplication \cite[Proposition 6.1]{kus-univ}, $\pi:C_0^u(\bG)\to C_0(\bG)$ is the canonical surjection \cite[Notation 5.1]{kus-univ}, and the fact that once more this is an action in the sense of \Cref{def:act} follows from \cite[Proposition 6.1]{kus-univ}.
\end{example}

We will also have to consider morphisms of $C^*$-algebras equipped with actions by some fixed $\bG$. The notion of equivariance with respect to $\bG$-actions makes sense for a broader class of morphisms than those that make up any of the three categories discussed above: arbitrary $C^*$-morphisms $A\to M(B)$ will do. Cf. \cite[\S 4.3]{vrgn-phd}, where the concept (of equivariance) is essentially the same: there too, possibly-degenerate morphisms $A\to M(B)$ are considered.

\begin{definition}\label{def:eqvr}
  Let $\rho_A:A\circlearrowleft \bG$ and $\rho_B\circlearrowleft\bG$ be two actions by an LCQG. A $C^*$-morphism $f:A\to M(B)$ is {\it ($\bG$-)equivariant} if the diagram
  \begin{equation*}
    \begin{tikzpicture}[auto,baseline=(current  bounding  box.center)]
      \path[anchor=base] 
      (0,0) node (l) {$A$}
      +(4,.5) node (u) {$M(B)$}
      +(2,-1.5) node (dl) {$M_A(C_0(\bG)\otimes A)$}
      +(7,-1.5) node (dr) {$M(C_0(\bG)\otimes M(B))$}
      +(9,0) node (r) {$M(C_0(\bG)\otimes B)$}
      ;
      \draw[->] (l) to[bend left=6] node[pos=.5,auto] {$\scriptstyle f$} (u);
      \draw[->] (u) to[bend left=6] node[pos=.5,auto] {$\scriptstyle \rho_B$} (r);
      \draw[->] (l) to[bend right=6] node[pos=.5,auto,swap] {$\scriptstyle \rho_A$} (dl);
      \draw[->] (dl) to[bend right=6] node[pos=.5,auto,swap] {$\scriptstyle \id\otimes f$} (dr);
      \draw[->] (dr) to[bend right=6] node[pos=.5,auto,swap] {$\scriptstyle \subseteq $} (r);
    \end{tikzpicture}
  \end{equation*}
  commutes, where
  \begin{itemize}
  \item the bottom arrow $\id\otimes f$ is that provided by the functor of \Cref{eq:isfnc}, having identified
    \begin{equation*}
      M(C_0(\bG)\otimes M(B)) = M_{M(B)}(C_0(\bG)\otimes M(B))
    \end{equation*}
    (because $M(B)$ is unital);
  \item the bottom right-hand inclusion is that of \Cref{cor:mmab}. 
  \end{itemize}
\end{definition}

%%%%%%%%%%%%%%%%%%%%%%%%%%%%%%%%%%%%%%%%%%%%%%%%%%%%%%%%%%%%%%%%%%%%%%%%%%%%%
%%%%%%%%%%%%%%%%%%%%%%%%%%%%%%%%%%%%%%%%%%%%%%%%%%%%%%%%%%%%%%%%%%%%%%%%%%%%%
\section{Categories of noncommutative structured spaces}\label{se:cats}

As noted in \cite[\S 2.8]{dvr-puc} one approach to proving that for a topological group $\bG$ every (classical) $\bG$-space admits a universal $\bG$-equivariant Hausdorff compactification is to proceed categorically (via adjoint functor theorems in the style of \cite[\S V.6 Theorem 2 or \S V.8 Theorem 2]{mcl}), for instance. The present section applies the same category-theoretic approach to the non-commutative setting that is our focus.

% % %%%%%%%%%%%%%%%%%%%%%%%%%%%%%%%%%%%%%%%%%%%%%%%%%%%%%%%%%%%%%%%%%%%%%%%%%%%%%
% % \subsection{Categories of actions}\label{subse:catact}
% %

Apart from $\cC^*_1$ and its various other flavors discussed above, we will encounter a number of other categories. 

Recall \cite[Introduction]{dvr-puc} that a {\it compactification} of a space $X$ is a compact Hausdorff space $K$ together with a dense-image continuous map $X\to K$. Casting possibly-non-unital $C^*$-algebras as non-commutative locally compact spaces, we work with the following analogue.

\begin{definition}\label{def:cpct}
  Let $A$ be a (possibly non-unital) $C^*$-algebra. A {\it compactification} of (the non-commutative space underlying) $A$ is a triple $(B,\ell,r)$ fitting into a commutative diagram
  \begin{equation}\label{eq:trip}
    \begin{tikzpicture}[auto,baseline=(current  bounding  box.center)]
      \path[anchor=base] 
      (0,0) node (l) {$A$}
      +(2,.5) node (u) {$B$}
      +(4,0) node (r) {$M(A)$}
      ;
      \draw[->] (l) to[bend left=6] node[pos=.5,auto] {$\scriptstyle \ell$} (u);
      \draw[->] (u) to[bend left=6] node[pos=.5,auto] {$\scriptstyle r$} (r);
      \draw[->] (l) to[bend right=6] node[pos=.5,auto,swap] {$\scriptstyle $} (r);
    \end{tikzpicture}
  \end{equation}
  where
  \begin{itemize}
  \item $r$ is a morphism in the category $\cC^*_1$ of unital $C^*$-algebras;
  \item $\ell$ is a morphism in $\cC^*$; 
  \item and the bottom arrow is the canonical inclusion. 
  \end{itemize}
  Compactifications of $A$ from a category $\cat{cpct}(A)$ in the obvious fashion, with morphisms
  \begin{equation*}
    (B,\ell,r)\to (B',\ell',r')
  \end{equation*}
  consisting of those unital $C^*$-morphisms $B\to B'$ making the appropriate four-vertex diagram commute.
\end{definition}

There are also equivariant versions of the various categories. 

\begin{definition}\label{def:cats}
  Let $\bG$ be an LCQG.
  \begin{enumerate}[(a)]
  \item\label{item:4} $\cC^{*\bG}_1$ is the category of unital $C^*$-algebras equipped with an action (\Cref{def:act}) by a fixed LCQG $\bG$, with $\bG$-equivariant unital morphisms in the sense of \Cref{def:eqvr}.

  \item\label{item:5} For an action $\rho:A\circlearrowleft \bG$ on a (possibly non-unital) $C^*$-algebra $A$ the category $\cat{cpct}(\rho)$ of {\it equivariant compactifications of $A$ (or $\rho$)} consists of quadruples $(B,\ell,r,\rho_B)$ where
    \begin{itemize}
    \item the triple $(B,\ell,r)$ is an object of $\cat{cpct}(A)$, as in \Cref{def:cpct};
    \item the unital $C^*$-algebra $B$ additionally carries a $\bG$ action $\rho_B:B\circlearrowleft \bG$;
    \item and all maps in \Cref{eq:trip} are $\bG$-equivariant in the sense of \Cref{def:eqvr}.
    \end{itemize}
    The morphisms are as in $\cat{cpct}(A)$, but also $\bG$-equivariant.
  \end{enumerate}  
\end{definition}

There are also the {\it comma categories} $A\downarrow \cC^*_1$ of \cite[\S II.6]{mcl}: the objects are $C^*$-morphisms $A\to B$ for unital $B$, with unital morphisms that make the relevant triangles commute.

The statement of \Cref{th:ccmpl} refers to the following notion, dual to \cite[\S V.1, Definition]{mcl} (slightly adapted for our use here).

\begin{definition}\label{def:crt}
  A functor $F:\cC\to \cC'$ {\it creates colimits} if for every functor $S:\cD\to \cC$ from a small diagram, a colimiting cocone for $F\circ S:\cD\to \cC'$ is the image of a unique (up to isomorphism) cocone on $S$, and that cocone is a colimit for $S$.
\end{definition}

\begin{theorem}\label{th:ccmpl}
  Let $\rho:A\circlearrowleft \bG$ be an LCQG action on a $C^*$-algebra. The three categories in the diagram
  \begin{equation}\label{eq:allforget}
    \begin{tikzpicture}[auto,baseline=(current  bounding  box.center)]
      \path[anchor=base] 
      (0,0) node (lu) {$\cat{cpct}(\rho)$}
      +(4,0) node (mu) {$\cat{cpct}(A)$}
      +(8,0) node (ru) {$A\downarrow \cC^*_1$}
      +(0,-1) node (ld) {$(B,\ell,r,\rho_B)$}
      +(4,-1) node (md) {$(B,\ell,r)$}
      +(8,-1) node (rd) {$A\stackrel{\ell}{\to} B$}
      ;
      \draw[->] (lu) to[bend left=6] node[pos=.5,auto] {$\scriptstyle \text{forget}$} (mu);
      \draw[->] (mu) to[bend left=6] node[pos=.5,auto] {$\scriptstyle \text{forget}$} (ru);
      \draw[|->] (ld) to[bend left=-6] node[pos=.5,auto] {$\scriptstyle $} (md);
      \draw[|->] (md) to[bend left=-6] node[pos=.5,auto] {$\scriptstyle $} (rd);
    \end{tikzpicture}
  \end{equation}
  are all cocomplete, and the forgetful functors depicted therein all create colimits.
\end{theorem}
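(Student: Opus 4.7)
The proof proceeds by a three-step cascade working right-to-left in \Cref{eq:allforget}. The base case is the standard cocompleteness of $\cC^*_1$. Hence the comma category $A\downarrow\cC^*_1$ is cocomplete with colimits computed ``underneath'': for a diagram $\{A\to B_i\}$ the colimit is $A\to \mathrm{colim}_{\cC^*_1} B_i$, with the structure map from $A$ induced by the compatible family.

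For the forgetful functor $\cat{cpct}(A)\to A\downarrow\cC^*_1$: given a diagram $\{(B_i,\ell_i,r_i)\}$ in $\cat{cpct}(A)$ with colimit $(B,\ell)$ in $A\downarrow\cC^*_1$ and structure maps $\phi_i:B_i\to B$, the compatible family of unital maps $r_i:B_i\to M(A)$ assembles, via universality in $\cC^*_1$, into a unique unital $r:B\to M(A)$. Composing with each $\phi_i$ verifies that $r\ell$ agrees with the canonical inclusion $A\hookrightarrow M(A)$, so $(B,\ell,r)$ is a compactification; uniqueness of the lift and the resulting colimit property in $\cat{cpct}(A)$ then follow from the same universal property in $\cC^*_1$.

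For the forgetful functor $\cat{cpct}(\rho)\to\cat{cpct}(A)$, let $(B,\ell,r)$ be the colimit in $\cat{cpct}(A)$ of a diagram $\{(B_i,\ell_i,r_i,\rho_{B_i})\}$, with structure maps $\phi_i:B_i\to B$. Each $\phi_i$ is unital between unital $C^*$-algebras, hence non-degenerate, so \Cref{le:isfnc} (with $C=C_0(\bG)$) furnishes unital extensions $\id\otimes\phi_i:M(C_0(\bG)\otimes B_i)\to M(C_0(\bG)\otimes B)$. Composing with $\rho_{B_i}$ yields a cocone $\{B_i\to M(C_0(\bG)\otimes B)\}$, compatible by the $\bG$-equivariance of the diagram's transition morphisms, and universality in $\cC^*_1$ produces a unique unital $\rho_B:B\to M(C_0(\bG)\otimes B)$. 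Coassociativity and the multiplier condition then hold because each of the relevant identities between $C^*$-morphisms is already satisfied after restriction to $\bigcup_i\phi_i(B_i)$, a set which generates $B$ as a $C^*$-algebra (by universality of the colimit). Equivariance of $\ell$, $r$, and each $\phi_i$ is built into the construction of $\rho_B$.

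The main obstacle is the Podle\'s density \Cref{eq:aactcont}: $\overline{\rho_B(B)(C_0(\bG)\otimes 1)} = C_0(\bG)\otimes B$. Transporting the density in each $B_i$ through $\id\otimes\phi_i$ shows that the closed subspace $L':=\overline{\rho_B(B)(C_0(\bG)\otimes 1)}\subseteq C_0(\bG)\otimes B$ contains $C_0(\bG)\otimes \phi_i(B_i)$ for every $i$. Combined with generation of $B$ by $\bigcup_i\phi_i(B_i)$ and closure properties of $L'$ stemming from $\rho_B$ being a unital multiplicative map that already satisfies the multiplier condition, this forces $L'=C_0(\bG)\otimes B$; unlike coassociativity and the multiplier axiom, this density is not preserved by algebraic generation alone and demands an analytic argument, which is the delicate step. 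Lastly, the universal property of $(B,\ell,r,\rho_B)$ in $\cat{cpct}(\rho)$ follows from that of $(B,\ell,r)$ in $\cat{cpct}(A)$, with $\bG$-equivariance of any induced map out of $B$ recovered by the same ``agreement on $\bigcup_i\phi_i(B_i)$'' argument applied to the two sides of the equivariance square.
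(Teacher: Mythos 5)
Your construction matches the paper's proof in all essentials: take the colimit of the underlying algebras, lift $\ell$, $r$ and the $\bG$-action uniquely through universal properties and the extensions supplied by \Cref{le:isfnc}, verify coassociativity and equivariance by agreement on the generating set $\bigcup_i\phi_i(B_i)$, and transport the density condition \Cref{eq:aactcont} from the pieces. The paper does this in a single pass for the composite of the two forgetful functors in \Cref{eq:allforget} rather than in your two-stage cascade, which is a cosmetic difference, and your treatment of the density step is no less detailed than the paper's; to close it, one checks that $Y:=\{b\in B:\ C_0(\bG)\otimes b\subseteq \overline{\rho_B(B)(C_0(\bG)\otimes 1)}\}$ is a closed subalgebra (if $b'\in Y$ then $\rho_B(B)(C_0(\bG)\otimes b')$ lies in the closure, and writing $c\otimes b=\lim\sum_k\rho_B(x_k)(c_k\otimes 1)$ gives $c\otimes bb'=\lim\sum_k\rho_B(x_k)(c_k\otimes b')$), after which generation by the self-adjoint sets $\phi_i(B_i)$ finishes the argument.

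The one statement you should repair is the base case: colimits in $A\downarrow \cC^*_1$ are \emph{not} computed ``underneath'' in $\cC^*_1$ once the indexing diagram is disconnected. The coproduct of $(A\to B_1)$ and $(A\to B_2)$ in the comma category is the pushout of $B_1\leftarrow A^+\to B_2$ in $\cC^*_1$ (an amalgamated free product over $A^+$), not the $\cC^*_1$-coproduct $B_1\ast B_2$; and for a discrete diagram your ``structure map from $A$ induced by the compatible family'' is not even well defined, since there are no compatibility constraints gluing the $\ell_i$. The paper's fix is to realize the comma-category colimit as the $\cC^*_1$-colimit of the diagram augmented by an initial object sent to $A^+$, with the augmented legs given by the structure maps. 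With that correction your remaining steps go through essentially verbatim: the cocones you build (the $r_i$, and $(\id\otimes\phi_i)\rho_{B_i}$) are automatically compatible over $A$ --- for instance $(\id\otimes\phi_i)\rho_{B_i}\ell_i=(\id\otimes\ell)\rho_A$ by equivariance of $\ell_i$ --- so the universal property you actually need is that of the colimit in $A\downarrow\cC^*_1$ (equivalently, of the augmented diagram), and it is available.
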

\begin{proof}
  $\cC^*_1$ has coequalizers and coproducts \cite[\S 2.1]{ped-pp} (which discusses co{\it kernels}, but coequalizers are described similarly), so it is indeed cocomplete by, say, \cite[\S V.2, Theorem 1]{mcl} or \cite[Theorem 12.3]{ahs}. That its comma category
  \begin{equation*}
    A\downarrow \cC^*_1\simeq A^+\downarrow \cC^*_1
  \end{equation*}
  is also cocomplete is then a simple exercise: the colimit of a functor $S:\cD\to A^+\downarrow \cC^*_1$ is identifiable with the colimit of
  \begin{equation*}
    S':\cD'\to \cC^*_1,
  \end{equation*}
  where
  \begin{itemize}
  \item $\cD'$ is $\cD$ with an additional initial object $*$;
  \item and $S'$ restricts to $S$ on $\cD\subset \cD'$, sends $*\mapsto A^+$ and the unique morphisms $*\to d$ to the morphisms constituting the objects $S(d)\in A^+\downarrow \cC^*_1$.
  \end{itemize}
  
  We prove the colimit-creation claim for the overall forgetful functor
  \begin{equation*}
    U: \cat{cpct}(\rho)\to \cC^*_1,
  \end{equation*}
  i.e. the composition of all of the functors in the diagram \Cref{eq:allforget}. No additional subtleties emerge in working with the individual functors, so the relevant ingredients will all feature in this single proof.

  Denote $\cC:=\cat{cpct}(\rho)$ for brevity and consider a functor $S:\cD\to \cC$ from a small category $\cD$. The cocompleteness of $\cC^*_1$ ensures the existence of a colimit
  \begin{equation}\label{eq:bcolim}
    B:=\varinjlim US\in \cC^*_1,
  \end{equation}
  i.e. a unital $C^*$-algebra equipped with a colimiting $US$-cocone in $\cC^*_1$ consisting of unital $C^*$ morphisms
  \begin{equation*}
    \iota_d:B_d\to B,\quad\text{where}\ B_d:=S(d) \text{ for } d\in \cD. 
  \end{equation*}
  We will equip $B$ with all of the desired structure (an action by $\bG$, etc.), in the only way possible, given the various compatibility constraints.

  {\bf (1): a $\bG$-action $\rho:B\circlearrowleft \bG$.} Denoting by $\rho_d:B_d\circlearrowleft\bG$ the individual actions on the $C^*$-algebras $B_d$, we want an action $\rho:B\circlearrowleft \bG$ that fits into commutative diagrams
  \begin{equation}\label{eq:phideqvr}
    \begin{tikzpicture}[auto,baseline=(current  bounding  box.center)]
      \path[anchor=base] 
      (0,0) node (l) {$B_d$}
      +(4,.5) node (u) {$M(C_0(\bG)\otimes B_d)$}
      +(4,-.5) node (d) {$B$}
      +(8,0) node (r) {$M(C_0(\bG)\otimes B)$}
      ;
      \draw[->] (l) to[bend left=6] node[pos=.5,auto] {$\scriptstyle \rho_d$} (u);
      \draw[->] (u) to[bend left=6] node[pos=.5,auto] {$\scriptstyle \id\otimes\iota_d$} (r);
      \draw[->] (l) to[bend right=6] node[pos=.5,auto,swap] {$\scriptstyle \iota_d$} (d);
      \draw[->] (d) to[bend right=6] node[pos=.5,auto,swap] {$\scriptstyle \rho$} (r);
    \end{tikzpicture}
  \end{equation}
  for $d\in \cD$ and is further compatible in the obvious fashion with the morphisms
  \begin{equation*}
    S(f):B_d\to B_{d'}\text{ for }d\stackrel{f}{\to}d'\in D.
  \end{equation*}
  That such a $\varphi$ exists follows immediately from the universality property of the colimit \Cref{eq:bcolim}. It is of course unital and hence non-degenerate, and its coassociativity
  \begin{equation}\label{eq:rhoassoc}
    (\id\otimes\rho)\circ\rho = (\Delta\otimes\id)\circ\rho
  \end{equation}
  follows again from the universality property of the colimit \Cref{eq:bcolim} and the diagrams
    \begin{equation*}
    \begin{tikzpicture}[auto,baseline=(current  bounding  box.center)]
      \path[anchor=base] 
      (0,0) node (l) {$B_d$}
      +(4,1) node (u) {$M(C_0(\bG)\otimes C_0(\bG)\otimes B_d)$}
      +(4,-.5) node (d) {$B$}
      +(8,0) node (r) {$M(C_0(\bG)\otimes C_0(\bG)\otimes B)$,}
      ;
      \draw[->] (l) to[bend left=6] node[pos=.5,auto] {$\scriptstyle (\id\otimes\rho_d)\rho_d=(\Delta\otimes\id)\rho_d$} (u);
      \draw[->] (u) to[bend left=6] node[pos=.5,auto] {$\scriptstyle \id\otimes\id\otimes\iota_d$} (r);
      \draw[->] (l) to[bend right=6] node[pos=.5,auto,swap] {$\scriptstyle \iota_d$} (d);
      \draw[->] (d) to[bend right=6] node[pos=.5,auto,swap] {$\scriptstyle \bullet$} (r);
    \end{tikzpicture}
  \end{equation*}
  where either morphism in \Cref{eq:rhoassoc} will do to fill in the arrow marked `$\bullet$'.
  
  We also need to verify the {\it continuity} of the action $\rho:B\circlearrowleft \bG$, i.e. condition \Cref{item:8} of \Cref{def:act} with $B$ in place of $A$. This follows from the commutativity of \Cref{eq:phideqvr} and the analogous conditions
  \begin{equation*}
    \overline{\rho_d(B_d)(C_0(\bG)\otimes \bC)}^{\|\cdot \|} = C_0(\bG)\otimes B_d.
  \end{equation*}
  for the individual $\rho_d$, and the fact that
  \begin{equation*}
    \iota_d(B_d)\subseteq B,\ d\in D
  \end{equation*}
  generate $B$ as a $C^*$-algebra.

  {\bf (2): an equivariant morphism $\ell:A\to B$.} We already have equivariant morphisms $\ell_d:A\to B_d$, so simply setting
  \begin{equation*}
    \ell:=A\stackrel{\ell_d}{\longrightarrow} B_d\stackrel{\iota_d}{\longrightarrow} B
  \end{equation*}
  will do: the commutativity of the diagrams
  \begin{equation*}
    \begin{tikzpicture}[auto,baseline=(current  bounding  box.center)]
      \path[anchor=base] 
      (0,0) node (l) {$A$}
      +(2,.5) node (u) {$B_d$}
      +(4,0) node (r) {$B_{d'}$}
      ;
      \draw[->] (l) to[bend left=6] node[pos=.5,auto] {$\scriptstyle \ell_d$} (u);
      \draw[->] (u) to[bend left=6] node[pos=.5,auto] {$\scriptstyle S(f)$} (r);
      \draw[->] (l) to[bend right=6] node[pos=.5,auto,swap] {$\scriptstyle \ell_{d'}$} (r);
    \end{tikzpicture}
  \end{equation*}
  for $f:d\in d'$ ensures that this definition does not depend on $d$, and the equivariance follows from that of $\ell_d$ and $\iota_d$.

  {\bf (3): an equivariant morphism $r:B\to M(A)$.} We have equivariant morphisms $r_d$ fitting into commutative diagrams 
  \begin{equation*}
    \begin{tikzpicture}[auto,baseline=(current  bounding  box.center)]
      \path[anchor=base] 
      (0,0) node (l) {$B_d$}
      +(2,.5) node (u) {$B_{d'}$}
      +(4,0) node (r) {$M(A)$}
      ;
      \draw[->] (l) to[bend left=6] node[pos=.5,auto] {$\scriptstyle S(f)$} (u);
      \draw[->] (u) to[bend left=6] node[pos=.5,auto] {$\scriptstyle r_{d'}$} (r);
      \draw[->] (l) to[bend right=6] node[pos=.5,auto,swap] {$\scriptstyle r_d$} (r);
    \end{tikzpicture}
  \end{equation*}
  so the universality property of the colimit \Cref{eq:bcolim} provides $r:B\to M(A)$, along with its equivariance.

  {\bf (4): colimit creation.} That what we have just constructed is indeed a colimit in
  \begin{equation*}
    \cC=\cat{cpct}(\rho)
  \end{equation*}
  is obvious from the construction itself, which at every stage leverages a universality property. On the other hand, the {\it uniqueness} (up to isomorphism) of the resulting $S$-cocone required by \Cref{def:crt} is also a consequence of the selfsame universality of \Cref{def:crt}:
  \begin{itemize}
  \item the obligate commutativity of \Cref{eq:phideqvr} (for all $d\in D$) defines $\rho$ uniquely;
  \item similarly, the requisite commutativity of
    \begin{equation*}
      \begin{tikzpicture}[auto,baseline=(current  bounding  box.center)]
        \path[anchor=base] 
        (0,0) node (l) {$B_d$}
        +(2,.5) node (u) {$B$}
        +(4,0) node (r) {$M(A)$}
        ;
        \draw[->] (l) to[bend left=6] node[pos=.5,auto] {$\scriptstyle \iota_d$} (u);
        \draw[->] (u) to[bend left=6] node[pos=.5,auto] {$\scriptstyle r$} (r);
        \draw[->] (l) to[bend right=6] node[pos=.5,auto,swap] {$\scriptstyle r_d$} (r);
      \end{tikzpicture}
    \end{equation*}
    determines $r$;
  \item and once more,
    \begin{equation*}
      \begin{tikzpicture}[auto,baseline=(current  bounding  box.center)]
        \path[anchor=base] 
        (0,0) node (l) {$A$}
        +(2,.5) node (u) {$B_d$}
        +(4,0) node (r) {$B$}
        ;
        \draw[->] (l) to[bend left=6] node[pos=.5,auto] {$\scriptstyle \ell_d$} (u);
        \draw[->] (u) to[bend left=6] node[pos=.5,auto] {$\scriptstyle \iota_d$} (r);
        \draw[->] (l) to[bend right=6] node[pos=.5,auto,swap] {$\scriptstyle \ell$} (r);
      \end{tikzpicture}
    \end{equation*}
    returns a unique $\ell$. 
  \end{itemize}
  This finishes the proof. 
\end{proof}

It is perhaps worth spelling out an ``absolute'' version of \Cref{th:ccmpl}, involving $\bG$ alone (and no action $\rho$).

Note, incidentally, that the same arguments also give colimits and colimit creation absent $A$; we only record the statement, as the proof essentially recapitulates (in part) that of \Cref{th:ccmpl}.

\begin{corollary}\label{cor:ccmpl-noa}
  For any LCQG $\bG$ the category $\cC^{*\bG}_1$ is cocomplete and the forgetful functor
  \begin{equation*}
    \cC^{*\bG}_1\to \cC^*_1
  \end{equation*}
  creates colimits.
\end{corollary}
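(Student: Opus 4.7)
The plan is to extract steps (1) and (4) of the proof of \Cref{th:ccmpl}, stripping away all mention of the compactification data $A$, $\ell$, and $r$. Let $U:\cC^{*\bG}_1\to\cC^*_1$ denote the forgetful functor and let $S:\cD\to\cC^{*\bG}_1$ be an arbitrary small diagram, with $B_d:=S(d)$ carrying the action $\rho_d:B_d\circlearrowleft\bG$. Since $\cC^*_1$ admits coequalizers and coproducts \cite[\S 2.1]{ped-pp}, hence is cocomplete by \cite[\S V.2, Theorem 1]{mcl}, I would first form
\begin{equation*}
  B \;:=\; \varinjlim US \;\in\; \cC^*_1
\end{equation*}
with colimiting cocone $\iota_d:B_d\to B$. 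All subsequent structure on $B$ will then be forced by universality.

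Next I would equip $B$ with a $\bG$-action $\rho:B\to M(C_0(\bG)\otimes B)$ by noting that the composites
\begin{equation*}
  (\id\otimes\iota_d)\circ\rho_d \;:\; B_d \to M(C_0(\bG)\otimes B)
\end{equation*}
assemble into a cocone on $US$ valued in the unital $C^*$-algebra $M(C_0(\bG)\otimes B)$, and hence factor uniquely through a unital morphism $\rho$ satisfying the analogue of \Cref{eq:phideqvr}. Coassociativity of $\rho$ follows from a second invocation of universality applied to the coassociativity of the individual $\rho_d$, exactly as in the proof of \Cref{th:ccmpl}. The multiplier condition \Cref{item:7} of \Cref{def:act} transfers from the $\rho_d$ to $\rho$ routinely, using the continuity of $\id\otimes\iota_d$ together with \Cref{cor:mmab}.

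The one non-formal step, and the only place where the argument leaves pure category theory, is verifying the continuity axiom \Cref{item:8}, i.e.\ $\overline{\rho(B)(C_0(\bG)\otimes\bC)}^{\|\cdot\|}=C_0(\bG)\otimes B$. Here one checks first that $W:=\overline{\rho(B)(C_0(\bG)\otimes\bC)}^{\|\cdot\|}$ contains $C_0(\bG)\otimes\iota_d(B_d)$ for every $d$ (immediate from the corresponding identity for $\rho_d$ and the intertwining \Cref{eq:phideqvr}), and then uses that $W$ is closed under left multiplication by $\rho(B)$ (because $\rho$ is multiplicative) together with the fact that $\bigcup_d \iota_d(B_d)$ generates $B$ as a $C^*$-algebra by construction of the colimit. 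Once the action is established, colimit creation in the sense of \Cref{def:crt} is automatic: every ingredient above was forced by universality in $\cC^*_1$, so any competing cocone in $\cC^{*\bG}_1$ factors through a unique $\cC^*_1$ morphism that automatically intertwines the actions by the corresponding uniqueness in the construction of $\rho$.
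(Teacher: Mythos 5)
Your argument is correct, and it is essentially the alternative route the paper itself flags in the sentence just before the corollary (``the same arguments also give colimits and colimit creation absent $A$''): you re-run steps (1) and (4) of the proof of \Cref{th:ccmpl} with the compactification data $A$, $\ell$, $r$ stripped out. The paper's recorded proof is different and much shorter: it sets $A=\{0\}$ in \Cref{th:ccmpl}, notes that $\{0\}$ is a unital $C^*$-algebra (indeed the terminal object of $\cC^*_1$), so that the right-hand forgetful functor in \Cref{eq:allforget} becomes an equivalence while the left-hand one specializes to $\cC^{*\bG}_1\to \cC^*_1$, and the corollary drops out of the theorem. Your direct construction buys logical independence from the compactification formalism (the corollary could then be stated and proved before \Cref{th:ccmpl}), at the cost of repeating the only genuinely analytic verification, namely the continuity condition \Cref{item:8} of \Cref{def:act} for the colimit action, which the specialization argument inherits for free. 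One caution on that step: knowing $W:=\overline{\rho(B)(C_0(\bG)\otimes\bC)}$ contains each $C_0(\bG)\otimes\iota_d(B_d)$ and satisfies $\rho(B)W\subseteq W$ does not by itself give $C_0(\bG)\otimes B\subseteq W$, because the images $\iota_d(B_d)$ only \emph{generate} $B$ rather than span it; to absorb a product, say $c\otimes xy$ with $x=\iota_d(x_0)$, one should first re-expand $c\otimes x$ as a limit of sums $\sum_i \rho(\iota_d(b_i))(c_i\otimes 1_B)$ using the density condition for $\rho_d$ pushed forward by $\id\otimes\iota_d$, then multiply on the right by $1\otimes y$ and apply your left $\rho(B)$-stability to the terms $\rho(\iota_d(b_i))(c_i\otimes y)$, inducting on word length. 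Since the paper's own proof of this step in \Cref{th:ccmpl} is at least as compressed, this is a presentational point rather than a gap.
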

\begin{proof}
  Simply set $A=\{0\}$ in \Cref{th:ccmpl}. This is a unital $C^*$-algebra (in fact the {\it terminal object} of $\cC^*_1$ \cite[Definition 7.4]{ahs}, admitting exactly one morphism from every other object), and the right-hand forgetful functor in \Cref{eq:allforget} becomes an equivalence, while the left-hand functor specializes to $\cC^{*\bG}_1\to \cC^*_1$.
\end{proof}

\begin{remark}\label{re:clsdvr}
  \Cref{cor:ccmpl-noa} is very much in the spirit of \cite[Theorem 3.2.4]{dvr-bk}, to the effect that the forgetful functor from topological $\bG$-spaces to $\bG$-spaces creates {\it limits}: the categories of $C^*$-algebras under consideration here are intuitively dual to those of ``spaces''.
\end{remark}

\Cref{th:ccmpl} will help prove that among all compactifications of an action $\rho:A\circlearrowleft\bG$ there is a ``largest'' one, analogous to its classical counterpart discussed in \cite[\S 2.8]{dvr-puc}.

\begin{corollary}\label{cor:term}
  For an LCQG action $\rho$ the category $\cat{cpct}(\rho)$ has a terminal object.
\end{corollary}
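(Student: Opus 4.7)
The plan is to exhibit the terminal object as the faithful quotient of a colimit, in $\cat{cpct}(\rho)$, of a small representative set of \emph{faithful} compactifications---those in which the structure map $r:B\to M(A)$ is injective---using only cocompleteness from \Cref{th:ccmpl} together with a cardinality bound and a faithful-reduction lemma. As a first step I would check that for any $(B,\ell,r,\rho_B)\in\cat{cpct}(\rho)$ the kernel $I:=\ker r\trianglelefteq B$ is $\bG$-invariant, so that $\rho_B$ descends to a $\bG$-action on $B/I$ yielding a new object $(B/I,\bar\ell,\bar r,\bar\rho_B)\in\cat{cpct}(\rho)$ with $\bar r$ injective, together with a canonical surjection $B\twoheadrightarrow B/I$ in $\cat{cpct}(\rho)$. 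The key input is the equivariance of $r$: for $b\in I$ the identity $(\id\otimes r)\rho_B(b)=\rho(r(b))=0$ forces $\rho_B(b)$ into $M_I(C_0(\bG)\otimes I)\subseteq M_B(C_0(\bG)\otimes B)$, using the injectivity behaviour recorded in \Cref{le:isfnc}.

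Faithful compactifications, up to isomorphism, then form a set: each is determined by a unital $C^*$-subalgebra of $M(A)$ containing $A$ together with a $\bG$-action on it restricting $\rho$, both bounded in cardinality by $|M(A)|$ and $|C_0(\bG)|$. Let $\cS\subseteq\cat{cpct}(\rho)$ be a small full subcategory containing a representative of every such isomorphism class. By \Cref{th:ccmpl} the colimit $B_\infty:=\varinjlim\cS$ exists in $\cat{cpct}(\rho)$, and applying the faithful reduction to it produces an object $B^\natural := B_\infty/\ker r_\infty$ whose structure map $r^\natural:B^\natural\to M(A)$ is injective; this $B^\natural$ is my candidate terminal object.

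For terminality, given any $C=(C,\ell_C,r_C,\rho_C)\in\cat{cpct}(\rho)$, the faithful quotient $C/\ker r_C$ is isomorphic to some $S\in\cS$, so the composite
\begin{equation*}
  C \twoheadrightarrow C/\ker r_C \xrightarrow{\cong} S \longrightarrow B_\infty \twoheadrightarrow B^\natural
\end{equation*}
provides a morphism $C\to B^\natural$ in $\cat{cpct}(\rho)$. Uniqueness is then immediate: any morphism $f:C\to B^\natural$ in $\cat{cpct}(\rho)$ satisfies $r^\natural\circ f=r_C$, and injectivity of $r^\natural$ pins $f$ down.

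The main technical obstacle is the faithful reduction itself: one must verify carefully that $\ker r$ is genuinely $\bG$-invariant and that the induced action on the quotient satisfies all three clauses of \Cref{def:act}, especially the density/continuity axiom \Cref{item:8}. A secondary subtlety is that \emph{distinct} $\bG$-actions can coexist on the same underlying $C^*$-subalgebra of $M(A)$, so $\cS$ cannot be replaced by a mere poset of $\bG$-invariant subalgebras of $M(A)$, and the terminal object cannot be assembled by naively intersecting or unioning subalgebras inside $M(A)$---the categorical colimit provided by \Cref{th:ccmpl} is what correctly assembles the action data.
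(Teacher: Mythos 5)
Your proposal is correct, and its load-bearing step---replacing an arbitrary object $(B,\ell,r,\rho_B)$ by its faithful quotient $B/\ker r\cong r(B)\subseteq M(A)$ and noting that faithful compactifications form a set---is exactly the paper's key step. Where you genuinely differ is the final assembly: the paper stops once it has this solution set and invokes the dual of the General Adjoint Functor Theorem (Mac Lane, \S V.6, Theorem 1) on the cocomplete category $\cat{cpct}(\rho)$, whereas you inline that argument, taking the colimit $B_\infty$ of a small skeleton of faithful objects (legitimate by \Cref{th:ccmpl}) and then its faithful quotient $B^\natural$; injectivity of $r^\natural$, together with the fact that morphisms of compactifications commute with the maps to $M(A)$, gives uniqueness of morphisms into $B^\natural$ directly. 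So you obtain an explicit model of the terminal object rather than a bare existence statement; the paper's route is shorter, yours is more self-contained.

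One justification in your sketch needs repair, although the statement it supports is true. From $(\id\otimes r)\rho_B(b)=0$ for $b\in I=\ker r$ you cannot conclude $\rho_B(b)\in M_I(C_0(\bG)\otimes I)$: that would require the kernel of $C_0(\bG)\otimes B\to C_0(\bG)\otimes (B/I)$ to be exactly $C_0(\bG)\otimes I$, i.e.\ exactness of $C_0(\bG)$, which is not available for a general LCQG (the paper only gets it later, for coamenable $\bG$). Fortunately the claim is not needed: write $r=\iota\circ q$ with $q:B\to r(B)$ surjective and $\iota:r(B)\to M(A)$ injective; functoriality and injectivity-preservation in \Cref{le:isfnc}, combined with \Cref{cor:mmab}, give $\id\otimes r=(\id\otimes\iota)\circ(\id\otimes q)$ with $\id\otimes\iota$ injective, whence $(\id\otimes q)\rho_B(b)=0$. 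That is precisely what makes $\rho_B$ descend to $B/I$, and it is how the paper phrases the step (restricting the extension of $\rho$ to $r(B)$ and checking it lands in $M(C_0(\bG)\otimes r(B))$); coassociativity and the density axiom of \Cref{def:act} for the quotient then follow from surjectivity of $q$, as you anticipate. A small simplification: once $r$ is injective, equivariance plus injectivity of $\id\otimes r$ force the action on $r(B)$ to be the restriction of (the extension of) $\rho$, so your worry about several actions coexisting on the same subalgebra of $M(A)$ disappears for faithful compactifications---though carrying the action data along, as you do, costs nothing.
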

\begin{proof}
  Given the cocompleteness provided by \Cref{th:ccmpl}, (the dual to) \cite[\S V.6, Theorem 1]{mcl} reduces the claim to the {\it solution-set condition}: there is a {\it set} (as opposed to a proper class) of objects
  \begin{equation*}
    x_i:=(B_i,\ell_i,r_i,\rho_{i})\in \cat{cpct}(\rho)
  \end{equation*}
  so that every object of $\cat{cpct}(\rho)$ admits a morphism to some $x_i$. We will argue that the $x_i$ can be chosen so that the right-hand morphisms $r_i:B_i\to M(A)$ are embeddings; clearly, then, they constitute a set, since there is an upper bound on the cardinalities of all of the structures involved.

  Consider an arbitrary object $(B,\ell,r,\rho_B)$ in $\cat{cpct}(\rho)$. The equivariance (\Cref{def:eqvr}) of $r:B\to M(A)$ provides the commutative diagram
  \begin{equation*}
    \begin{tikzpicture}[auto,baseline=(current  bounding  box.center)]
      \path[anchor=base] 
      (0,0) node (l) {$B$}
      +(4,.5) node (u) {$M(A)$}
      +(2,-1.5) node (dl) {$M(C_0(\bG)\otimes B)$}
      +(7,-1.5) node (dr) {$M(C_0(\bG)\otimes M(A))$}
      +(9,0) node (r) {$M(C_0(\bG)\otimes A)$}
      ;
      \draw[->] (l) to[bend left=6] node[pos=.5,auto] {$\scriptstyle r$} (u);
      \draw[->] (u) to[bend left=6] node[pos=.5,auto] {$\scriptstyle \rho_A$} (r);
      \draw[->] (l) to[bend right=6] node[pos=.5,auto,swap] {$\scriptstyle \rho_B$} (dl);
      \draw[->] (dl) to[bend right=6] node[pos=.5,auto,swap] {$\scriptstyle \id\otimes r$} (dr);
      \draw[->] (dr) to[bend right=6] node[pos=.5,auto,swap] {$\scriptstyle \subseteq $} (r);
    \end{tikzpicture}
  \end{equation*}
  whence it follows that the restriction of
  \begin{equation*}
    \rho_A:M(A)\to M(C_0(\bG)\otimes A)
  \end{equation*}
  to the unital $C^*$-subalgebra $r(B)\subseteq M(A)$ takes values in $M(C_0(\bG)\otimes r(B))$. That restriction, henceforth denoted by $\rho_{r(B)}$, then fits into a commutative diagram
  \begin{equation*}
    \begin{tikzpicture}[auto,baseline=(current  bounding  box.center)]
      \path[anchor=base] 
      (0,0) node (l) {$B$}
      +(3,.5) node (u) {$r(B)$}
      +(3,-.5) node (d) {$M(C_0(\bG)\otimes B)$}
      +(7,0) node (r) {$M(C_0(\bG)\otimes r(B))$}
      ;
      \draw[->] (l) to[bend left=6] node[pos=.5,auto] {$\scriptstyle r$} (u);
      \draw[->] (u) to[bend left=6] node[pos=.5,auto] {$\scriptstyle \rho_{r(B)}$} (r);
      \draw[->] (l) to[bend right=6] node[pos=.5,auto,swap] {$\scriptstyle \rho_B$} (d);
      \draw[->] (d) to[bend right=6] node[pos=.5,auto,swap] {$\scriptstyle \id\otimes r$} (r);
    \end{tikzpicture}
  \end{equation*}
  The requisite coassociativity of $\rho_{r(B)}$ follows immediately from that of $\rho_B$ and the surjectivity of $r:B\to r(B)$, as does the action-continuity condition
  \begin{equation*}
    \overline{\rho_{r(B)}(r(B))(C_0(\bG)\otimes \bC)} = C_0(\bG)\otimes r(B). 
  \end{equation*}
  We thus have a morphism in $\cat{cpct}(\rho)$ from the arbitrary object $(B,\ell,r,\rho_B)$ to the object
  \begin{equation*}
    (r(B),\ r\circ\ell,\ (r(B)\subseteq M(A)),\ \rho_{r(B)})\in \cat{cpct}(\rho). 
  \end{equation*}
  As explained above, the fact that the third component of this new object is an embedding into $M(A)$ completes the proof. 
\end{proof}

We can now define, in the present quantum context, versions of classical action-compactifications (see \cite[\S 2.8]{dvr-puc} for arbitrary actions and \cite[Introduction]{brk} for compactifications of regular self-actions, i.e. {\it ambits}).

\begin{definition}\label{def:univcpct}
  The {\it greatest (or maximal, or equivariant) compactification} of an LCQG action $\rho:A\circlearrowleft\bG$ is the terminal object of $\cat{cpct}(\rho)$ provided by \Cref{cor:term}.

  The {\it greatest ambit} of a locally compact quantum group $\bG$ is the maximal compactification of its regular action
  \begin{equation*}
    \Delta_{\bG}:C_0(\bG)\to M(C_0(\bG)\otimes C_0(\bG))
  \end{equation*}
  discussed in \Cref{ex:regred}. 
\end{definition}

%%%%%%%%%%%%%%%%%%%%%%%%%%%%%%%%%%%%%%%%%%%%%%%%%%%%%%%%%%%%%%%%%%%%%%%%%%%%%
%%%%%%%%%%%%%%%%%%%%%%%%%%%%%%%%%%%%%%%%%%%%%%%%%%%%%%%%%%%%%%%%%%%%%%%%%%%%%
\section{Equivariant epimorphisms and regular monomorphisms}\label{se:epimono}

Recall \cite[Definitions 7.32 and 7.39]{ahs} that a morphism $f:A\to B$ in a category is
\begin{itemize}
\item {\it monic} (or a {\it monomorphism}) if it is left-cancellable: for every parallel pair $f_i:A'\to A$, $i=1,2$ we have
  \begin{equation*}
    ff_1 = ff_2\Rightarrow f_1=f_2.
  \end{equation*}
\item dually, {\it epic} (or an {\it epimorphism}) if it is right-cancellable: for every parallel pair $f_i:B\to B'$, $i=1,2$ we have
  \begin{equation*}
    f_1 f = f_2 f\Rightarrow f_1=f_2. 
  \end{equation*}
\end{itemize}
The notions are meant to recast, in purely categorical terms, those of injection and surjection respectively.

In {\it concrete} categories \cite[Definition 5.1]{ahs} such as those of \Cref{th:ccmpl} (i.e. those consisting of sets with additional structure, where morphisms are structure-respecting functions) surjections are always epimorphisms, but the converse is occasionally interesting and non-trivial: \cite[Theorem 2]{reid} and \cite[Corollary 4]{hnb}, for instance, prove this converse (epimorphisms are surjective) for $\cC^*_1$. We extend the result to the categories studied in \Cref{th:ccmpl}, after some preparation.

\begin{theorem}\label{th:episurj}
  For any locally compact quantum group $\bG$ and action $\rho:A\circlearrowleft \bG$, the epimorphisms in any of the categories $\cat{cpct}(\rho)$, $\cat{cpct}(A)$ and $A\downarrow \cC^*_1$ are those whose underlying $C^*$-morphism is surjective.
\end{theorem}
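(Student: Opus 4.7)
Write $U:\cC\to\cC^*_1$ for the composite forgetful functor, with $\cC$ any of $\cat{cpct}(\rho)$, $\cat{cpct}(A)$, or $A\downarrow\cC^*_1$; this $U$ is visibly faithful in all three cases, since morphisms in $\cC$ are $\cC^*_1$-morphisms subject to some extra commutation constraints.

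The \emph{sufficiency} direction (surjective $\Rightarrow$ epic) is immediate. If $Uf:UX\to UY$ is surjective, hence epic in $\cC^*_1$, then for any $\cC$-parallel pair $g_1,g_2:Y\to Z$ with $g_1f=g_2f$ applying $U$ and right-cancelling the epimorphism $Uf$ yields $Ug_1=Ug_2$, and faithfulness of $U$ gives $g_1=g_2$.

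For the \emph{necessity} direction the plan is to exploit the standard self-pushout characterization: a morphism $f:X\to Y$ is epic if and only if the two coprojections $\iota_1,\iota_2:Y\to Y\sqcup_X Y$ into the pushout of $f$ against itself coincide. Given $f$ epic in $\cC$, the cocompleteness provided by \Cref{th:ccmpl} builds this pushout in $\cC$, and the epi hypothesis forces $\iota_1=\iota_2$ there. Pushing along $U$: pushouts are connected colimits, and these are preserved by all the forgetful functors in \Cref{eq:allforget} (a direct consequence of the colimit-creation clause of \Cref{th:ccmpl}) as well as by the final forgetful $A\downarrow\cC^*_1\to\cC^*_1$, for which the ``added initial object'' $A^+$ appearing in the proof of \Cref{th:ccmpl} is inert over connected indexing shapes. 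Hence $U(Y\sqcup_X Y)\cong UY\sqcup_{UX}UY$ with $U\iota_i$ the coprojections, and the equality $U\iota_1=U\iota_2$ exhibits $Uf$ as epic in $\cC^*_1$. The Reid--Hofmann--Neeb result (\cite[Theorem 2]{reid}, \cite[Corollary 4]{hnb}) already quoted in the excerpt then forces $Uf$ to be surjective.

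The one moving part that needs a careful word is pushout-preservation under $A\downarrow\cC^*_1\to\cC^*_1$, since general colimits in the comma category differ from those in $\cC^*_1$; but pushout diagrams are connected, which is precisely the regime where the underlying $C^*$-algebra of the comma-category colimit coincides with the $\cC^*_1$-colimit. Modulo this observation the argument is entirely formal, and the real work has already been done in \Cref{th:ccmpl}.
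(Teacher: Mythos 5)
Your proof is correct, but it finishes differently from the paper. Both arguments share the first move: by \Cref{th:ccmpl} the forgetful functors in \Cref{eq:allforget} create (hence preserve) colimits, so via \Cref{le:prespp} (equivalently, your self-pushout characterization) epimorphisms descend to $A\downarrow\cC^*_1$. At that point the paper stops descending—precisely because $A\downarrow\cC^*_1\to\cC^*_1$ is not covered by \Cref{th:ccmpl}—and instead argues inside the comma category: factor the epimorphism through its image, note the embedding part is still epic \cite[Proposition 7.41]{ahs}, invoke \cite[Theorem 6]{hnb} that $C^*$-embeddings are equalizers in $\cC^*_1$, lift the equalizer to $A\downarrow\cC^*_1$ via limit creation by the comma projection, and conclude with ``an epic equalizer is an isomorphism'' \cite[Proposition 7.54]{ahs}. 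You instead push the epimorphism all the way down to $\cC^*_1$, using the (correct, and correctly flagged) fact that the comma projection $A\downarrow\cC^*_1\to\cC^*_1$ creates connected colimits, in particular pushouts, and then quote \cite[Theorem 2]{reid} or \cite[Corollary 4]{hnb} that epimorphisms in $\cC^*_1$ are surjective. Your route is shorter and treats the $C^*$-algebraic input as a black box; the paper's route avoids the connected-colimit observation altogether and, more importantly, its equalizer mechanism is not wasted—the same argument (injective morphisms are equalizers in the comma category, lifted via colimit creation) is reused verbatim in \Cref{th:injeq}, which your approach would not yield as a byproduct. If you keep your version, it would be worth writing out the two-line verification that the comma projection creates connected colimits (the structure map $A^+\to\varinjlim$ is induced from any single component, well-defined by connectedness), since that is the only step not already delegated to a stated result.
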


Recall first the following well-known characterization of monomorphisms.

\begin{lemma}\label{le:monchar}
  Let $f:A\to B$ be a morphism in a category, and assume its self-pullback
  \begin{equation*}
    \begin{tikzpicture}[auto,baseline=(current  bounding  box.center)]
      \path[anchor=base] 
      (0,0) node (l) {$A\times_BA$}
      +(2,.5) node (u) {$A$}
      +(2,-.5) node (d) {$A$}
      +(4,0) node (r) {$B$}
      ;
      \draw[->] (l) to[bend left=6] node[pos=.5,auto] {$\scriptstyle \pi_1$} (u);
      \draw[->] (u) to[bend left=6] node[pos=.5,auto] {$\scriptstyle f$} (r);
      \draw[->] (l) to[bend right=6] node[pos=.5,auto,swap] {$\scriptstyle \pi_2$} (d);
      \draw[->] (d) to[bend right=6] node[pos=.5,auto,swap] {$\scriptstyle f$} (r);
    \end{tikzpicture}
  \end{equation*}
  exists. The following are equivalent:
  \begin{enumerate}[(a)]
  \item\label{item:17} $f$ is monic;
  \item\label{item:18} $\pi_1$ is monic;
  \item\label{item:20} both $\pi_i$ are monic;
  \item\label{item:19} $\pi_1$ is an isomorphism;  
  \item\label{item:21} both $\pi_i$ are isomorphisms.
  \end{enumerate}
\end{lemma}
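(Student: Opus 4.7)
The plan is to pivot the whole argument around the diagonal morphism $\Delta\colon A\to A\times_B A$, which exists by the universal property of the pullback applied to the compatible pair $(\id_A,\id_A)$ (compatibility being the tautological $f\id_A=f\id_A$). By construction $\pi_i\Delta=\id_A$ for $i=1,2$, so $\Delta$ is a common section of $\pi_1$ and $\pi_2$. This observation alone collapses most of the equivalences: any $\pi_i$ that is either monic or an isomorphism must then be an isomorphism with inverse $\Delta$, since $\pi_i\Delta=\id$ combined with the fact that a split epic monomorphism is an isomorphism does the work.

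The first substantive step is \Cref{item:17}$\Rightarrow$\Cref{item:21}. Assuming $f$ monic and using $f\pi_1=f\pi_2$ (the defining property of the pullback), left-cancellation gives $\pi_1=\pi_2$. Then I would verify $\Delta\pi_1=\id_{A\times_B A}$ via the uniqueness half of the pullback universal property: both $\Delta\pi_1$ and $\id$ are maps $A\times_B A\to A\times_B A$ whose two $\pi_j$-components equal $\pi_1$ (using $\pi_1=\pi_2$ for the $j=2$ component). So $\pi_1$ is an isomorphism with inverse $\Delta$, and the same reasoning applies to $\pi_2$. The chain \Cref{item:21}$\Rightarrow$\Cref{item:19}, \Cref{item:21}$\Rightarrow$\Cref{item:20}, and \Cref{item:20}$\Rightarrow$\Cref{item:18} (and likewise \Cref{item:19}$\Rightarrow$\Cref{item:18}) is immediate, as isomorphisms are monic.

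The closing step is \Cref{item:18}$\Rightarrow$\Cref{item:17}. Assuming $\pi_1$ monic, take any parallel pair $g,h\colon X\to A$ with $fg=fh$; the universal property of the pullback produces a unique $u\colon X\to A\times_B A$ with $\pi_1 u=g$ and $\pi_2 u=h$. The morphism $\Delta g$ also satisfies $\pi_1(\Delta g)=g=\pi_1 u$, so monicity of $\pi_1$ forces $u=\Delta g$, and therefore $h=\pi_2 u=\pi_2\Delta g=g$. This closes the cycle \Cref{item:17}$\Rightarrow$\Cref{item:21}$\Rightarrow$\Cref{item:18}$\Rightarrow$\Cref{item:17}, with \Cref{item:19} and \Cref{item:20} slotting in as intermediates.

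Honestly, there is no real obstacle here: the whole proof is a short diagram chase once the diagonal $\Delta$ is introduced. The only place one has to be slightly careful is in invoking the \emph{uniqueness} part of the pullback universal property when showing $\Delta\pi_1=\id$, since the existence part is of course trivial; otherwise the verification is routine.
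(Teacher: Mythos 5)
Your proof is correct and pivots on the same essential fact as the paper's argument: the diagonal $\Delta:A\to A\times_BA$ is a common section of the projections, so a monic $\pi_i$ is a split epimorphism and hence an isomorphism. The only difference is that you verify directly the implications (a)$\Rightarrow$(e) and (b)$\Rightarrow$(a) that the paper delegates to citations (the kernel-pair discussion in Mac Lane--Moerdijk and the monic-retraction lemma in Ad\'amek--Herrlich--Strecker), which makes your argument self-contained but not substantively different.
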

\begin{proof}
  That \Cref{item:18} $\Leftrightarrow$ \Cref{item:20} and \Cref{item:19} $\Leftrightarrow$ \Cref{item:21} follows from the fact that $A\times_BA$ has an automorphism interchanging the factors and the $\pi_i$, the equivalence \Cref{item:17} $\Leftrightarrow$ \Cref{item:21} is noted in \cite[\S I.2, discussion on kernel pairs]{mm}, and given that $\pi_1$ is a {\it retraction} \cite[Definition 7.24]{ahs} (i.e. it has a right inverse, namely the map $A\to A\times_BA$ with identity components), it is an isomorphism precisely when it is monic \cite[Proposition 7.36]{ahs}.
\end{proof}

Naturally, there is a dual version of \Cref{le:monchar} relating epimorphisms and pushouts, etc. As a consequence, we have

\begin{lemma}\label{le:prespp}
  \begin{enumerate}[(a)]
  \item Pullback-preserving functors defined on categories with pullbacks also preserve monomorphisms.
  \item Dually, pushout-preserving functors defined on categories with pushouts preserve epimorphisms.
  \end{enumerate}
\end{lemma}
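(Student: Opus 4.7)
The plan is to derive both claims directly from the characterization provided by \Cref{le:monchar} (and its dual), without any separate work on each part.

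For part (a), I would let $F:\cC\to \cC'$ be a functor between categories with pullbacks that preserves them, and take a monomorphism $f:A\to B$ in $\cC$. By \Cref{le:monchar}, the existence of the self-pullback $A\times_B A$ is guaranteed in $\cC$, and the projection $\pi_1:A\times_B A\to A$ is an isomorphism. Applying $F$ and invoking the pullback-preservation hypothesis, the image square
\begin{equation*}
  \begin{tikzpicture}[auto,baseline=(current  bounding  box.center)]
    \path[anchor=base]
    (0,0) node (l) {$F(A\times_B A)$}
    +(3,.5) node (u) {$FA$}
    +(3,-.5) node (d) {$FA$}
    +(6,0) node (r) {$FB$}
    ;
    \draw[->] (l) to[bend left=6] node[pos=.5,auto] {$\scriptstyle F\pi_1$} (u);
    \draw[->] (u) to[bend left=6] node[pos=.5,auto] {$\scriptstyle Ff$} (r);
    \draw[->] (l) to[bend right=6] node[pos=.5,auto,swap] {$\scriptstyle F\pi_2$} (d);
    \draw[->] (d) to[bend right=6] node[pos=.5,auto,swap] {$\scriptstyle Ff$} (r);
  \end{tikzpicture}
\end{equation*}
realizes $F(A\times_B A)$ as the self-pullback $FA\times_{FB}FA$ in $\cC'$, with projections $F\pi_i$. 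Since any functor preserves isomorphisms, $F\pi_1$ remains an isomorphism, and \Cref{le:monchar} (the implication \Cref{item:19}$\Rightarrow$\Cref{item:17}) applied in $\cC'$ yields that $Ff$ is monic.

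Part (b) then follows by dualizing: a pushout-preserving functor $F:\cC\to \cC'$ becomes a pullback-preserving functor $F^{\mathrm{op}}:\cC^{\mathrm{op}}\to \cC'^{\mathrm{op}}$, epimorphisms in $\cC$ are monomorphisms in $\cC^{\mathrm{op}}$, and part (a) transported through this opposite-category correspondence yields the desired conclusion. There is no real obstacle here; the lemma is essentially a formal consequence of the characterization already proved, and the only thing worth mentioning explicitly is the trivial but important observation that functors always preserve isomorphisms, so the ``$\pi_1$ is an isomorphism'' criterion transports across $F$ with no extra hypotheses.
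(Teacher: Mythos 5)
Your proof is correct and follows exactly the route the paper intends: the paper's proof is the one-liner ``Immediate from \Cref{le:monchar} and its dual version,'' and your write-up simply fills in the details of that same argument (preserve the self-pullback square, note functors preserve isomorphisms, apply the criterion of \Cref{le:monchar} in the target category, dualize for epimorphisms). No gaps.
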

\begin{proof}
  Immediate from \Cref{le:monchar} and its dual version. 
\end{proof}

\pf{th:episurj}
\begin{th:episurj}
  \Cref{th:ccmpl} and \Cref{le:prespp} reduce the problem to the rightmost category in \Cref{eq:allforget}, i.e. the comma category $A\downarrow \cC^*_1$. Consider an epimorphism in that category, consisting of a morphism $f:B\to C$ in $\cC^*_1$ fitting into a commutative triangle
  \begin{equation*}
    \begin{tikzpicture}[auto,baseline=(current  bounding  box.center)]
      \path[anchor=base] 
      (0,0) node (l) {$A$}
      +(2,.5) node (u) {$B$}
      +(4,0) node (r) {$C$}
      ;
      \draw[->] (l) to[bend left=6] node[pos=.5,auto] {$\scriptstyle $} (u);
      \draw[->] (u) to[bend left=6] node[pos=.5,auto] {$\scriptstyle f$} (r);
      \draw[->] (l) to[bend right=6] node[pos=.5,auto,swap] {$\scriptstyle $} (r);
    \end{tikzpicture}
  \end{equation*}
  Factoring $f=f'f''$ through its image, $f'$ is also epic \cite[Proposition 7.41]{ahs}; there is no loss of generality, then, in assuming that $f$ itself is an embedding. As such, it must be an equalizer in $\cC^*_1$ by \cite[Theorem 6]{hnb}. Because for any object $c\in \cC$ in complete category the forgetful functor
  \begin{equation*}
    c\downarrow \cC\to \cC
  \end{equation*}
  is easily seen to create limits, $f$ is also an equalizer in
  \begin{equation*}
    A\downarrow \cC^*_1\simeq A^+\downarrow \cC^*_1.
  \end{equation*}
  An epic equalizer is an isomorphism \cite[Proposition 7.54]{ahs}, so we are done.
\end{th:episurj}

As noted in the proof of \Cref{th:episurj}, \cite[Theorem 6]{hnb} shows that embeddings of $C^*$-algebras are equalizers. It is not difficult to show that, dually to \cite[Theorem 2]{reid} or \cite[Corollary 4]{hnb}, in $\cC^*_1$ the {\it monomorphisms} (or {\it monic} morphisms \cite[Definition 7.32]{ahs}; the notion dual to that of `epimorphism') are exactly the embeddings.

Going back to monomorphisms in $\cC^*_1$,
\begin{itemize}
\item observe that the forgetful functor $\cC^*_1\to\cat{Set}$ preserves finite limits;
\item so the pullback of a $\cC^*_1$-morphism $f:A\to B$ along itself is simply the set-theoretic pullback $A\times_BA$;
\item whence the two projections $A\times_BA\to A$ can be isomorphisms (i.e. $f$ is monic, per \Cref{le:monchar}) only when $A\to B$ is one-to-one.
\end{itemize}
In view of this, \cite[Theorem 6]{hnb} says that in $\cC^*_1$ monomorphisms are {\it regular} \cite[\S 0.5]{ar}.

I do not know whether monomorphisms in any of the categories from \Cref{th:ccmpl} are injective (more on this below), but \cite[Theorem 6]{hnb} does generalize: those that {\it do} happen to be injective are equalizers.

\begin{theorem}\label{th:injeq}
  Let $\bG$ be a locally compact group and $\rho:A\circlearrowleft \bG$ an action. In any of the categories $\cat{cpct}(\rho)$, $\cat{cpct}(A)$ and $A\downarrow \cC^*_1$, the morphisms whose underlying $C^*$-algebra maps are injective are equalizers.
\end{theorem}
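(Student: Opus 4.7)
I would exhibit each injective morphism $f:X\to Y$ as the equalizer of its own cokernel pair, using \cite[Theorem 6]{hnb} as the $\cC^*_1$-level engine. First I would form the pushout $P:=Y\sqcup_X Y$, with coprojections $i_1,i_2:Y\rightrightarrows P$, in whichever of the three categories we are working. By \Cref{th:ccmpl} this pushout exists and is created (along with its coprojections) by the forgetful functors all the way down to $A\downarrow \cC^*_1$, and the further forgetful functor $A\downarrow\cC^*_1\to\cC^*_1$ creates connected colimits (a routine comma-category check, applicable since pushouts are connected). So $P$ and the $i_j$ coincide with the corresponding $\cC^*_1$-pushout.

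Next, in $\cC^*_1$ the injective morphism $f$ is an equalizer by \cite[Theorem 6]{hnb}, and a short diagram chase upgrades this to the assertion that $f$ is specifically $\mathrm{eq}(i_1,i_2)$: given any other equalizer presentation $f=\mathrm{eq}(g_1,g_2)$ with $g_1,g_2:Y\rightrightarrows D$, the pair $(g_1,g_2)$ factors uniquely through $P$ by the universal property of the pushout, so the universal properties of the two equalizers transfer.

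The remaining task is to lift this equalizer property back to the ambient category. Given a test morphism $h:Z\to Y$ in that category with $i_1 h = i_2 h$, the $\cC^*_1$-step produces a unique factorization $\tilde h:Z\to X$ with $f\tilde h=h$, and I would check that $\tilde h$ automatically respects the extra structure. Compatibility with the legs $\ell$ and $r$ (for $\cat{cpct}(A)$ and $\cat{cpct}(\rho)$) follows by applying $f$ to the relevant triangles and invoking the left-cancellability of $f$; and the $\bG$-equivariance of $\tilde h$ (needed for $\cat{cpct}(\rho)$) follows from that of $h$ and $f$ once one cancels $\id\otimes f$ from the target side of the equivariance square for $\tilde h$.

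\textbf{Main obstacle.} The only non-formal step is this last cancellation: it requires $\id\otimes f$ to remain injective after passage through the functor $B\mapsto M_B(C_0(\bG)\otimes B)$. This is precisely the injectivity-preservation clause of \Cref{le:isfnc}, so the obstacle is already dispatched by that lemma.
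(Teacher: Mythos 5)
Your argument is correct and takes essentially the same route as the paper: both exhibit the injection as the equalizer of its own cokernel pair, resting on \cite[Theorem 6]{hnb} at the $C^*$-level and on the colimit creation of \Cref{th:ccmpl} to identify that pushout consistently across $\cat{cpct}(\rho)$, $\cat{cpct}(A)$ and $A\downarrow\cC^*_1$ (the paper packages your hand-made ``equalizer of the cokernel pair'' step as the dual of \cite[Proposition 11.33]{ahs}). Your closing verification --- lifting the induced factorization by cancelling the injective $\id\otimes f$ supplied by \Cref{le:isfnc} --- is precisely the check the paper compresses into the phrase ``can be reinterpreted'', so nothing essential differs.
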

\begin{proof}
  Consider a morphism $\iota:x\to y$ in any of the categories in question, mapped to an injection by the forgetful functor to $\cC^*_1$. We saw, in the course of the proof of \Cref{th:episurj}, that $\iota$ is an equalizer in $A\downarrow \cC^*_1$ (by forgetting to $\cC^*_1$, etc.). But then it is also the equalizer (in $A\downarrow \cC^*_1$ again) of its own self-pushout
  \begin{equation*}
    \begin{tikzpicture}[auto,baseline=(current  bounding  box.center)]
      \path[anchor=base] 
      (0,0) node (l) {$x$}
      +(2,.5) node (u) {$y$}
      +(2,-.5) node (d) {$y$}
      +(4,0) node (r) {$y\coprod_xy$:}
      ;
      \draw[->] (l) to[bend left=6] node[pos=.5,auto] {$\scriptstyle \iota$} (u);
      \draw[->] (u) to[bend left=6] node[pos=.5,auto] {$\scriptstyle \iota_1$} (r);
      \draw[->] (l) to[bend right=6] node[pos=.5,auto,swap] {$\scriptstyle \iota$} (d);
      \draw[->] (d) to[bend right=6] node[pos=.5,auto,swap] {$\scriptstyle \iota_2$} (r);
    \end{tikzpicture}
  \end{equation*}
  this is the observation dual to \cite[Proposition 11.33]{ahs}. Now, since by \Cref{th:ccmpl} the forgetful functors from any of our categories down to $A\downarrow \cC^*_1$ all create colimits, the equalizer 
  \begin{equation*}
    \begin{tikzpicture}[auto,baseline=(current  bounding  box.center)]
      \path[anchor=base] 
      (0,0) node (l) {$x$}
      +(2,0) node (u) {$y$}
      +(4,0) node (r) {$y\coprod_xy$}
      ;
      \draw[->] (l) to[bend left=0] node[pos=.5,auto] {$\scriptstyle \iota$} (u);
      \draw[->] (u) to[bend left=6] node[pos=.5,auto] {$\scriptstyle \iota_1$} (r);      
      \draw[->] (u) to[bend right=6] node[pos=.5,auto,swap] {$\scriptstyle \iota_2$} (r);
    \end{tikzpicture}
  \end{equation*}
  can be reinterpreted as one in $\cat{cpct}(\rho)$ or $\cat{cpct}(A)$, as appropriate. 
\end{proof}

\section{Local presentability}\label{se:lpres}

We recalled in the course of the proof of \Cref{th:ccmpl} that the category $\cC^*_1$ of unital $C^*$-algebras is cocomplete; it is in fact a great deal more than that: as noted in \cite[Remark 6.10]{dlros}, $\cC^*_1$ is {\it locally $\aleph_1$-presentable} in the sense of \cite[Definition 1.17]{ar}. This follows, for instance, from \cite[Theorem 3.28]{ar} and the fact that $\cC^*_1$ is a {\it variety of algebras} equipped with $\aleph_0$-ary operations \cite[Theorem 2.4]{pr1}.

We recall some category-theoretic background from \cite{ar}, assuming some of the more basic material (e.g. (co)limits) covered, say, in \cite{mcl}. Recall \cite[paragraph preceding Definition 1.13]{ar} that a {\it regular} cardinal $\kappa$ is one which cannot be written as a union of fewer than $\kappa$ sets, each of cardinality $<\kappa$.

We will often think of posets $(I,\le)$ as categories, with one arrow $i\to j$ whenever $i\le j$ (as in \cite[\S I.2]{mcl}). An aggregate of \cite[Definitions 1.13 and 1.17]{ar} now reads

\begin{definition}\label{def:lpres}
  Let $\kappa$ be a regular cardinal. A poset is {\it $\kappa$-directed} if every set of $<\kappa$ elements has an upper bound.
  
  Let $\cC$ be a category.
  \begin{enumerate}[(a)]
  \item A {\it $\kappa$-directed diagram} in $\cC$ is a functor $(I,\le)\to \cC$ for a $\kappa$-directed poset $(I,\le)$.
  \item A {\it $\kappa$-directed colimit} in $\cC$ is the colimit of a $\kappa$-directed diagram.
  \item An object $c\in \cC$ is {\it $\kappa$-presentable} if $\mathrm{hom}(c,-)$ preserves $\kappa$-directed colimits, and it is {\it presentable} if it is $\kappa$-presentable for some regular cardinal $\kappa$.
  \item $\cC$ is {\it $\kappa$-presentable} if
    \begin{itemize}
    \item it is cocomplete (i.e. has arbitrary small colimits; dual to \cite[\S V.1]{mcl});
    \item and has a set $S$ of $\kappa$-presentable objects such that every object is a $\kappa$-directed colimit of objects in $S$.
    \end{itemize}
  \item $\cC$ is {\it locally presentable} if it is locally $\kappa$-presentable for some regular cardinal $\kappa$.
  \end{enumerate}
  We might, on occasion, drop the modifier `locally' and speak instead of `presentable categories', etc.
\end{definition}

\begin{remark}
  As observed, $\cC^*_1$ is locally $\aleph_1$-presentable, hence its comma category
  \begin{equation*}
    A\downarrow \cC^*_1\simeq A^+\downarrow \cC^*_1
  \end{equation*}
  must also be $\aleph_1$-presentable \cite[Proposition 1.57]{ar}.
\end{remark}

Presentability is of interest here in part because it facilitates existence proofs for various universal structures: limits, adjoint functors, etc. Per \cite[Remark 1.56]{ar}, a presentable category is
\begin{itemize}
\item complete (in addition to being, by definition, {\it co}complete);
\item {\it well-powered} (i.e. every object has only a set of subobjects \cite[\S V.8]{mcl}) and dually, {\it co-well-powered}. 
\end{itemize}

All of this renders presentable categories well-behaved enough to automatically satisfy the requirements of {\it Freyd's Special Adjoint Functor Theorem (SAFT)} \cite[\S V.8, Theorem 2]{mcl}: in the present context, the dual of that result reads

\begin{proposition}\label{pr:saft}
  A functor defined on a locally presentable category is a left adjoint if and only if it is cocontinuous.  \qedhere
\end{proposition}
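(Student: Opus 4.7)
The forward direction is standard and free of any presentability hypothesis: a functor with a right adjoint preserves all colimits that exist in its domain (see, e.g., \cite[Theorem V.5.1]{mcl}). So the entire content of the statement lies in the converse, and my plan is to derive it as a direct application of the dual of Freyd's Special Adjoint Functor Theorem (SAFT) \cite[\S V.8, Theorem 2]{mcl}.

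Recall that the dual SAFT asserts that a functor $F:\cC\to \cD$ has a right adjoint whenever $\cC$ is cocomplete, locally small, co-well-powered, admits a (small) generating set, and $F$ is cocontinuous. The strategy is therefore to verify, one by one, that a locally presentable category $\cC$ satisfies each of these four hypotheses, and then quote dual SAFT. Concretely:
\begin{enumerate}[(i)]
\item Cocompleteness is built into \Cref{def:lpres}.
\item Local smallness is part of the general convention under which \cite{ar} operates and is implicit in the definition of presentability (since $\mathrm{hom}(c,-)$ is taken to be set-valued).
\item Co-well-poweredness was recorded in the remark immediately preceding the proposition, quoting \cite[Remark 1.56]{ar}.
\item A small generating set is furnished by the definition itself: if $\cC$ is locally $\kappa$-presentable, fix a set $S$ of $\kappa$-presentable objects such that every object of $\cC$ is a $\kappa$-directed colimit of objects in $S$. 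Then $S$ is a (strong) generator, because any parallel pair of morphisms equalized by all maps out of objects of $S$ must already coincide on the $\kappa$-directed colimit cocone expressing their common source, hence be equal.
\end{enumerate}

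With these four properties in place, the dual SAFT applies verbatim and produces a right adjoint to any cocontinuous $F$, which is precisely the assertion that $F$ is a left adjoint. I do not anticipate a genuine obstacle: the only mildly delicate point is (iv), and even that is a routine unwinding of \Cref{def:lpres}; alternatively, one may simply cite \cite[Theorem 1.66]{ar}, which packages this implication (cocontinuous functor out of a locally presentable category $\Rightarrow$ right adjoint exists) as a stand-alone result.
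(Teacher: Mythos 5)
Your proposal is correct and follows essentially the same route as the paper, which likewise deduces the statement from the dual of Freyd's SAFT using cocompleteness, completeness and (co-)well-poweredness of locally presentable categories per \cite[Remark 1.56]{ar}. Your extra step (iv), verifying the small generating set via the $\kappa$-presentable objects (or citing \cite[Theorem 1.66]{ar}), only makes explicit a hypothesis of SAFT that the paper leaves implicit, so there is no substantive difference.
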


\begin{theorem}\label{th:allpres}
  For an LCQG action $\rho:A\circlearrowleft \bG$ the categories $\cat{cpct}(\rho)$, $\cat{cpct}(A)$ and $A\downarrow\cC^{*\bG}_1$ are all locally presentable.
\end{theorem}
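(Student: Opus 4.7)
The point of departure is the fact recalled just before the theorem: $\cC^*_1$ is locally $\aleph_1$-presentable, whence its coslice $A\downarrow \cC^*_1\simeq A^+\downarrow \cC^*_1$ is too by \cite[Proposition 1.57]{ar}. The plan is to bootstrap from here by exhibiting each of the three categories of interest as a construction within the 2-category of locally presentable categories and accessible functors --- pseudo-pullbacks along accessible functors, inserters, equifiers, and reflective subcategories --- all of which preserve local presentability by the closure theorems of \cite[Chapter 2]{ar}.

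For $\cat{cpct}(A)$ I would present it as the equifier, cutting down
\[
(A\downarrow \cC^*_1)\times_{\cC^*_1}(\cC^*_1\downarrow M(A))
\]
by the condition that the composite $A\to B\to M(A)$ coincide with the canonical inclusion $A\subseteq M(A)$. Both slice factors are locally $\aleph_1$-presentable by \cite[Proposition 1.57]{ar} and the two forgetful functors down to $\cC^*_1$ preserve colimits (hence are accessible), so the pseudo-pullback is locally presentable and the equifier step preserves this.

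For the other two categories I would first handle $\cC^{*\bG}_1$, then obtain $A\downarrow\cC^{*\bG}_1$ as a coslice and $\cat{cpct}(\rho)$ by a further pseudo-pullback with $\cat{cpct}(A)$ over $\cC^*_1$, augmented by an equifier demanding the $\bG$-equivariance (\Cref{def:eqvr}) of $\ell$ and $r$. A $\bG$-action on a unital $B$ amounts to a morphism $\rho_B:B\to F(B)$ in $\cC^*_1$ where $F(B):=M(C_0(\bG)\otimes B)$ is the endofunctor of \Cref{le:isfnc}, subject to the coassociativity \Cref{item:6} of \Cref{def:act} and the density condition \Cref{item:8}. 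Granted that $F$ is accessible, the category of coassociative $F$-coalgebras is locally presentable as an inserter followed by an equifier, and the density condition carves out a full reflective subcategory (it demands that a canonical $C^*$-morphism be norm-dense, an orthogonality-class condition in the sense of \cite[Theorem 1.39]{ar}).

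The principal technical obstacle is verifying that the endofunctor $F(B)=M(C_0(\bG)\otimes B)$ on $\cC^*_1$ is accessible. The tensor product $C_0(\bG)\otimes(-)$ interacts well with $\aleph_1$-directed colimits, but passage to the multiplier algebra of a non-unital $C^*$-algebra is a strict-topological completion that does not manifestly commute with directed colimits. I would address this either by describing $M(-)$ via its universal double-centralizer property and presenting it as a right adjoint whose left adjoint is suitably accessible, or by sidestepping the explicit check entirely: \Cref{cor:ccmpl-noa} already tells us that the forgetful functor $\cC^{*\bG}_1\to\cC^*_1$ creates colimits, so one can attempt to transfer local presentability along it via a Makkai--Par\'e-style theorem in the vein of \cite[Theorem 2.78]{ar}, reducing the question to producing a dense set of presentable objects in $\cC^{*\bG}_1$.
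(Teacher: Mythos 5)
Your outline correctly reduces everything to $\cC^{*\bG}_1$ plus comma-category bookkeeping (the paper does the same via \cite[Proposition 1.57]{ar}), but the load-bearing step is exactly the one you flag and do not close: accessibility of the endofunctor $F(B)=M(C_0(\bG)\otimes B)$ on $\cC^*_1$ (and of $B\mapsto M(C_0(\bG)^{\otimes 2}\otimes B)$, which the coassociativity equifier also needs). Without that, none of the inserter/equifier/pseudo-pullback closure theorems from \cite[Chapter 2]{ar} apply, so the argument for $\cC^{*\bG}_1$ does not get off the ground. Neither of your proposed remedies repairs this. For (i): $F$ is not a right adjoint in general, since it need not preserve even equalizers unless $C_0(\bG)$ is exact --- that is precisely why \cref{le:finlim} carries an exactness hypothesis, while \cref{th:allpres} is asserted for arbitrary LCQGs --- and $M(-)$ itself is only functorial on proper/non-degenerate morphisms and visibly fails to commute with directed colimits (e.g.\ $M(c_0)=\ell^\infty$ against $\varinjlim_n \bC^n=c_0$). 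Moreover your premise that $C_0(\bG)\otimes(-)$ ``interacts well with $\aleph_1$-directed colimits'' is only safe for diagrams with \emph{injective} connecting maps; for general directed colimits the minimal tensor product misbehaves (\cite[\S II.9.6.5]{blk}), which is the very reason the paper does not take your route. For (ii): a Makkai--Par\'e-style transfer ``reducing to a dense set of presentable objects in $\cC^{*\bG}_1$'' is not a reduction but a restatement --- by \cite[Theorem 1.20]{ar} that is essentially the theorem itself, and colimit creation by $\cC^{*\bG}_1\to\cC^*_1$ (\cref{cor:ccmpl-noa}) does not by itself produce such a set. Your claim that the Podle\'s-type density condition \cref{def:act}\cref{item:8} cuts out an orthogonality class/reflective subcategory is likewise asserted without argument.

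The paper's proof is designed to sidestep precisely these obstructions: it equips $\cC^{*\bG}_1$ with the (surjection, injection) factorization system (\cref{pr:surjinj}) and proves local \emph{generation} with respect to injections, invoking \cite[Theorem 1]{ar-lg} to conclude local presentability. Concretely, (1) a L\"owenheim--Skolem-style closing-off argument (\cref{le:univk}) writes every $\bG$-$C^*$-algebra as a $\kappa$-directed union of ``small'' $\bG$-invariant unital subalgebras satisfying the continuity condition, and (2) every object is shown to be generated with respect to injections, using only that each object of $\cC^*_1$ is presentable together with the fact that $M(C_0(\bG)\otimes-)$ preserves injectivity (\cref{le:isfnc}) --- i.e.\ only directed colimits of \emph{injections} are ever used, where the tensor/multiplier functors behave. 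If you want to salvage your plan, you would either have to prove accessibility of $F$ outright (doubtful in this generality) or switch, as the paper does, to the injective-morphism-relative notion of smallness.
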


First, the consequences noted above:

\begin{corollary}\label{cor:allgood}
  For an LCQG action $\rho:A\circlearrowleft \bG$ the categories $\cat{cpct}(\rho)$, $\cat{cpct}(A)$ and $A\downarrow\cC^{*\bG}_1$ are complete, well-powered and co-well-powered.
\end{corollary}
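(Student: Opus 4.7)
The proof is essentially immediate: \Cref{th:allpres} hands us local presentability of all three categories, and the remarks preceding \Cref{pr:saft} recorded from \cite[Remark 1.56]{ar} list precisely the three properties in question (completeness, well-poweredness, co-well-poweredness) as automatic consequences of local presentability. So the plan is simply to invoke \Cref{th:allpres} and then cite \cite[Remark 1.56]{ar} for each of the three conclusions.

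More concretely, I would write something like: by \Cref{th:allpres}, each of $\cat{cpct}(\rho)$, $\cat{cpct}(A)$ and $A\downarrow\cC^{*\bG}_1$ is locally presentable; by \cite[Remark 1.56 and Theorem 1.58]{ar}, any locally presentable category is complete (in addition to being cocomplete by definition), well-powered, and co-well-powered; applying this to each of the three categories in turn yields the corollary.

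There is no real obstacle here — the work has already been done in \Cref{th:allpres}, and the present statement is just the bookkeeping step of translating local presentability into its three most commonly used downstream consequences. The only thing worth being careful about is making sure the citation pointers are to the right statements in \cite{ar}, since completeness is proved in a different spot (\cite[Corollary 1.28]{ar}) from well-poweredness and co-well-poweredness (\cite[Remark 1.56]{ar} and the subsequent discussion), but either way no new argument is required beyond quoting these.
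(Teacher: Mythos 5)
Your proposal is correct and matches the paper's own proof, which likewise deduces the corollary immediately from \Cref{th:allpres} together with \cite[Remark 1.56]{ar}. The extra care about which precise statements in \cite{ar} carry completeness versus (co-)well-poweredness is fine but inessential.
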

\begin{proof}
  Immediate from \Cref{th:allpres} and \cite[Remark 1.56]{ar}.
\end{proof}

And again:

\begin{corollary}\label{cor:ladj}
  The forgetful functors in \Cref{eq:allforget} are left adjoints.
\end{corollary}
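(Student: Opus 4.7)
The proof is essentially a one-line application of machinery already assembled. The plan is to invoke Proposition~\ref{pr:saft} after verifying its two hypotheses for each of the two forgetful functors in~\Cref{eq:allforget}.

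First I would note that both forgetful functors have locally presentable domains: by~\Cref{th:allpres}, the categories $\cat{cpct}(\rho)$ and $\cat{cpct}(A)$ are both locally presentable, so the hypothesis of~\Cref{pr:saft} on the domain is satisfied in each case. Next, by~\Cref{th:ccmpl} these forgetful functors \emph{create} colimits, and creation of colimits implies preservation of colimits (a created colimit is, by definition, a colimit, and applying the functor returns the prescribed colimit in the codomain). Hence both forgetful functors are cocontinuous.

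With cocontinuity and local presentability of the domain in hand,~\Cref{pr:saft} immediately yields that each forgetful functor admits a right adjoint, i.e.\ is a left adjoint, completing the proof. There is no real obstacle: the substantive work was done in~\Cref{th:ccmpl} (colimit creation) and~\Cref{th:allpres} (local presentability), and~\Cref{cor:ladj} is just their conjunction through the dual special adjoint functor theorem.
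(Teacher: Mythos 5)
Your proposal is correct and follows exactly the paper's own argument: combine \Cref{th:allpres} (local presentability of the domains), \Cref{th:ccmpl} (colimit creation, hence cocontinuity since the codomains are cocomplete), and the dual SAFT statement \Cref{pr:saft}. The only minor caveat is that creation of colimits yields preservation because the target categories are themselves cocomplete (so the relevant colimits downstairs exist), which is guaranteed by \Cref{th:ccmpl}; with that noted, nothing is missing.
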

\begin{proof}
  This follows from \Cref{th:ccmpl,th:allpres} and \Cref{pr:saft}.
\end{proof}

Rather than attack \Cref{th:allpres} as-is, we make some preparations. These are mostly intended to avoid extensive work with directed diagrams of non-injective $C^*$ morphisms, which do not play well with minimal $C^*$-algebra tensor products: \cite[\S II.9.6.5]{blk}, for instance, notes that the endofunctor $C\otimes-$ of $\cC^*$ preserves directed colimits of injections but not arbitrary directed colimits.

To mitigate the problem we rely on the theory of {\it locally generated} categories, as covered in \cite[\S 1.E]{ar} and extended in \cite[Definition]{ar-lg}. The latter source generalizes the narrower concept of the former in the context of a {\it factorization system}, which notion we recollect.

\begin{definition}\label{def:fact}
  A {\it factorization system} on a category $\cC$ is a pair $(\cE,\cM)$ of classes of morphisms in $\cC$ such that
  \begin{enumerate}[(1)]
  \item $\cE$ and $\cM$ are both closed under composition;
  \item the isomorphisms of $\cC$ are contained in both $\cE$ and $\cM$;
  \item and $\cC$ has {\it (essentially) unique $(\cE,\cM)$-factorizations} in the sense that every morphism $f$ factors as
    \begin{equation*}
      f = m\circ e,\ m\in \cM,\ e\in \cE,
    \end{equation*}
    and this factorization is unique up to unique isomorphism: two such factorizations on the outside of the diagram 
    \begin{equation*}
      \begin{tikzpicture}[auto,baseline=(current  bounding  box.center)]
        \path[anchor=base] 
        (0,0) node (l) {$\bullet$}
        +(2,.5) node (u) {$\bullet$}
        +(2,-.5) node (d) {$\bullet$}
        +(4,0) node (r) {$\bullet$}
        ;
        \draw[->] (l) to[bend left=6] node[pos=.5,auto] {$\scriptstyle e_1$} (u);
        \draw[->] (u) to[bend left=6] node[pos=.5,auto] {$\scriptstyle m_1$} (r);
        \draw[->] (l) to[bend right=6] node[pos=.5,auto,swap] {$\scriptstyle e_2$} (d);
        \draw[->] (d) to[bend right=6] node[pos=.5,auto,swap] {$\scriptstyle m_2$} (r);
        \draw[->] (u) to[bend right=0] node[pos=.5,auto,swap] {$\scriptstyle h$} node[pos=.5,auto] {$\scriptstyle \cong$} (d);
      \end{tikzpicture}
    \end{equation*}
    admit a unique vertical isomorphism making the diagram commute.
  \end{enumerate}

  Given morphism properties $\cP$ and $\cQ$, a factorization system $(\cE,\cM)$ is {\it $(\cP,\cQ)$} if $\cE$ consists of morphisms having property $\cP$ and $\cM$ consists of morphisms having property $\cQ$. Example: {\it (epi, mono)} factorization systems are those in which $\cE$ consists of epimorphisms and $\cM$ of monomorphisms.
\end{definition}

\begin{remark}
  The apparently-stronger notion of factorization system of \cite[Definition 14.1]{ahs} is equivalent by \cite[Proposition 14.7]{ahs}. On a related note, \cite[Proposition 14.6]{ahs} shows that in fact the class of isomorphisms precisely {\it coincides} with $\cE\cap \cM$ (rather than only being contained therein, as initially assumed).
\end{remark}

(Epi, mono) factorization systems in the sense of \Cref{def:fact} are essentially the gadgets introduced in \cite[\S III]{isbell-fact} modulo different language and coincide with the notion defined in passing in \cite[discussion preceding Definition]{ar-lg}. We will work mostly with (epi, mono) factorizations.

The alternative take on locally presentable categories, given a factorization system $(\cE,\cM)$, is as follows (\cite[Definition]{ar-lg} and also \cite[Definition 2.5]{dlros}):

\begin{definition}\label{def:lg}
  Let $(\cE,\cM)$ be a factorization system on a category $\cC$ and $\kappa$ a regular cardinal.
  \begin{enumerate}[(a)]
  \item\label{item:11} An object $c\in \cC$ is {\it $\cM$-$\kappa$-generated} or {\it $\kappa$-generated with respect to (wrt) $\cM$} if $\mathrm{hom}(c,-)$ preserves $\kappa$-directed colimits with connecting morphisms in $\cM$.
  \item\label{item:12} $c$ is {\it $\cM$-generated} or {\it generated with respect to $\cM$} if it is $\cM$-$\kappa$-generated for some regular cardinal $\kappa$.
  \item\label{item:13} $\cC$ is {\it $\cM$-locally $\kappa$-generated} if
    \begin{itemize}
    \item it is cocomplete;
    \item and has a set $S$ of $\cM$-$\kappa$-generated objects such that every object is a $\kappa$-directed colimit of objects in $S$ with connecting morphisms in $\cM$.
    \end{itemize}
  \item\label{item:14} $\cC$ is {\it $\cM$-locally generated} if it is $\cM$-locally $\kappa$-generated for some regular cardinal $\kappa$.
  \end{enumerate}
\end{definition}

\cite[Theorem 1]{ar-lg} says that local presentability is equivalent to $\cM$-local generation for {\it any} factorization system $(\cE,\cM)$. To bring that result in scope, then, we need such factorization systems.

\begin{proposition}\label{pr:surjinj}
  For any locally compact quantum group $\bG$, the category $\cC^{*\bG}_1$ has a factorization system $(\cE,\cM)$ where
  \begin{itemize}
  \item $\cE$ is the class of surjective morphisms in $\cC^{*\bG}_1$;
  \item and $\cM$ is the class of injective morphisms.
  \end{itemize}
\end{proposition}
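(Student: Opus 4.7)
\smallskip
\noindent\textbf{Proof plan.} The strategy is to lift the image factorization of $\cC^*_1$ to $\cC^{*\bG}_1$. Given an equivariant unital morphism $f:(A,\rho_A)\to(B,\rho_B)$, I will factor $f=\iota\circ q$ in $\cC^*_1$, where $q:A\twoheadrightarrow f(A)$ is the corestriction onto the (closed) image and $\iota:f(A)\hookrightarrow B$ is the inclusion. The substantive step is to equip the unital $C^*$-subalgebra $f(A)\subseteq B$ with a $\bG$-action $\rho_{f(A)}$ that renders both $q$ and $\iota$ equivariant.

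The equivariance of $f$ reads
\begin{equation*}
\rho_B\circ\iota\circ q=(\id\otimes\iota)\circ(\id\otimes q)\circ\rho_A,
\end{equation*}
so $\rho_B|_{f(A)}$ takes values in the image of the embedding
\begin{equation*}
\id\otimes\iota:M(C_0(\bG)\otimes f(A))\hookrightarrow M(C_0(\bG)\otimes B),
\end{equation*}
which is one-to-one by the injectivity-preservation clause of \Cref{le:isfnc}. Identifying along this embedding produces a unique $\rho_{f(A)}:f(A)\to M(C_0(\bG)\otimes f(A))$ making $\iota$ (and hence also $q$) equivariant. The multiplier condition of \Cref{def:act} is automatic since $f(A)$ is unital; coassociativity and the continuity axiom \Cref{eq:aactcont} for $\rho_{f(A)}$ are inherited from the corresponding properties of $\rho_A$ by pushing them through the surjection $q$, in essentially the same manner as the action $\rho_{r(B)}$ is treated in the proof of \Cref{cor:term}.

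The remaining clauses of \Cref{def:fact} amount to light bookkeeping. Surjections and injections in $\cC^{*\bG}_1$ are each closed under composition and contain all isomorphisms, because the forgetful functor to $\cC^*_1$ both preserves and reflects these properties. For essential uniqueness, any two $(\mathrm{surj},\mathrm{inj})$-factorizations of $f$ in $\cC^{*\bG}_1$ become the essentially unique image factorization upon forgetting to $\cC^*_1$, yielding a unique $C^*$-isomorphism $h$ between the two intermediate algebras; $h$ is automatically $\bG$-equivariant, since equivariance along the injective leg into $B$ pins the action on each intermediate algebra down uniquely (by injectivity of the corresponding $\id\otimes\iota_i$).

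I expect the main obstacle to be the construction and axiomatic verification of $\rho_{f(A)}$ itself: it requires careful tracking of several multiplier algebras and essential use of \Cref{le:isfnc} in its injectivity-preserving form (with a tacit appeal to \Cref{cor:mmab} for ambient identifications). Beyond that step, the argument is largely parallel to the restriction-to-the-image construction already carried out in \Cref{cor:term}.
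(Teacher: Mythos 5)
Your proposal is correct and follows essentially the same route as the paper: factor through the closed image in $\cC^*_1$, use the injectivity-preserving part of \Cref{le:isfnc} to corestrict $\rho_B$ to an action on the image making both legs equivariant (unitality rendering the multiplier condition automatic), inherit continuity via the surjection and verify coassociativity, and settle essential uniqueness by noting the induced action on the image is pinned down by equivariance of the injective leg. The only cosmetic difference is that the paper checks coassociativity through the embedding $\id^{\otimes 2}\otimes\iota$ into $M(C_0(\bG)^{\otimes 2}\otimes B)$ rather than by pushing it through the surjection, which is an equally valid bookkeeping choice.
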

\begin{proof}
  The first two conditions in \Cref{def:fact} are self-evident: composition preserves both surjectivity and injectivity, and isomorphisms enjoy both properties; it thus remains to discuss (unique) factorization.

  Let $\rho_A:A\circlearrowleft \bG$ and $\rho_B:B\circlearrowleft\bG$ be two objects in $\cC^{*\bG}_1$. A $\bG$-equivariant unital-$C^*$-algebra morphism $f:A\to B$ factors as
  \begin{equation}\label{eq:ffact}
    \begin{tikzpicture}[auto,baseline=(current  bounding  box.center)]
      \path[anchor=base] 
      (0,0) node (l) {$A$}
      +(2,.5) node (u) {$C$}
      +(4,0) node (r) {$B$}
      ;
      \draw[->>] (l) to[bend left=6] node[pos=.5,auto] {$\scriptstyle \pi$} (u);
      \draw[right hook->] (u) to[bend left=6] node[pos=.5,auto] {$\scriptstyle \iota$} (r);
      \draw[->] (l) to[bend right=6] node[pos=.5,auto,swap] {$\scriptstyle f$} (r);
    \end{tikzpicture}
  \end{equation}
  through its image as a surjection followed by an embedding, so we will have factorization as soon as we equip $C$ with a $\bG$-action making both $\pi$ and $\iota$ equivariant.

  By \Cref{le:isfnc} we have an embedding
  \begin{equation*}
    \id\otimes\iota:M(C_0(\bG)\otimes C)\subseteq M(C_0(\bG)\otimes B)
  \end{equation*}
  (we can drop the subscripts on the multiplier-algebra symbols by \Cref{re:when1} because $C$, $B$, etc. are unital). The commutativity of the diagram
  \begin{equation*}
    \begin{tikzpicture}[auto,baseline=(current  bounding  box.center)]
      \path[anchor=base] 
      (0,0) node (l) {$A$}
      +(2,.5) node (u) {$M(C_0(\bG)\otimes A)$}
      +(3,-.5) node (dl) {$C$}
      +(6,-.5) node (d) {$B$}
      +(6,.5) node (ur) {$M(C_0(\bG)\otimes C)$}
      +(10,0) node (r) {$M(C_0(\bG)\otimes B)$,}
      ;
      \draw[->] (l) to[bend left=6] node[pos=.5,auto] {$\scriptstyle \rho_A$} (u);
      \draw[->] (u) to[bend left=6] node[pos=.5,auto] {$\scriptstyle \id\otimes\pi$} (ur);
      \draw[right hook->] (ur) to[bend left=6] node[pos=.5,auto] {$\scriptstyle \id\otimes\iota$} (r);
      \draw[->>] (l) to[bend right=6] node[pos=.5,auto,swap] {$\scriptstyle \pi$} (dl);
      \draw[right hook->] (dl) to[bend right=6] node[pos=.5,auto,swap] {$\scriptstyle \iota$} (d);
      \draw[->] (d) to[bend right=6] node[pos=.5,auto,swap] {$\scriptstyle \rho_B$} (r);
    \end{tikzpicture}
  \end{equation*}
  expressing the equivariance of $f$, ensures that $\rho_B$ restricts to a (unital, hence non-degenerate) morphism
  \begin{equation*}
    \rho_C:C\to M(C_0(\bG)\otimes C)
  \end{equation*}
  so that furthermore $\pi:A\to C$ is $\rho_A$-$\rho_C$-equivariant:
  \begin{equation}\label{eq:piequiv}
    \begin{tikzpicture}[auto,baseline=(current  bounding  box.center)]
      \path[anchor=base] 
      (0,0) node (l) {$A$}
      +(2,.5) node (u) {$M(C_0(\bG)\otimes A)$}
      +(4,-.5) node (d) {$C$}
      +(6,0) node (r) {$M(C_0(\bG)\otimes C)$.}
      ;
      \draw[->] (l) to[bend left=6] node[pos=.5,auto] {$\scriptstyle \rho_A$} (u);
      \draw[->] (u) to[bend left=6] node[pos=.5,auto] {$\scriptstyle \id\otimes\pi$} (r);
      \draw[->] (l) to[bend right=6] node[pos=.5,auto,swap] {$\scriptstyle \pi$} (d);
      \draw[->] (d) to[bend right=6] node[pos=.5,auto,swap] {$\scriptstyle \rho_C$} (r);
    \end{tikzpicture}
  \end{equation}
  The coassociativity of $\rho_C$ follows from that of $\rho_B$ restricted to $C$, together with the fact that
  \begin{equation*}
    \id^{\otimes 2}\otimes \iota: M(C_0(\bG)^{\otimes 2}\otimes C)\to M(C_0(\bG)^{\otimes 2}\otimes B)
  \end{equation*}
  is an embedding (\Cref{le:isfnc} again). Finally, the continuity condition (\Cref{def:act} \Cref{item:8})
  \begin{equation*}
    \overline{\rho_C(C)(C_0(\bG)\otimes \bC)}^{\|\cdot \|} = C_0(\bG)\otimes C
  \end{equation*}
  follows from its analogue for $A$, the commutativity of \Cref{eq:piequiv}, and the surjectivity of $\pi$.

  Thus far we have a factorization
  \begin{equation*}
    f=m\circ e,\ m\in \cM,\ e\in \cE
  \end{equation*}
  for an arbitrary morphism \Cref{eq:ffact} in $\cC^{*\bG}_1$. Uniqueness is clear: for every such factorization the image of $e$ can be identified with the image $C\subseteq B$ of $f$ as above, and the $\bG$-action on that image is then uniquely defined by the requirement that
  \begin{equation*}
    \begin{tikzpicture}[auto,baseline=(current  bounding  box.center)]
      \path[anchor=base] 
      (0,0) node (l) {$C$}
      +(2,.5) node (u) {$M(C_0(\bG)\otimes C)$}
      +(4,-.5) node (d) {$B$}
      +(6,0) node (r) {$M(C_0(\bG)\otimes B)$.}
      ;
      \draw[->] (l) to[bend left=6] node[pos=.5,auto] {$\scriptstyle \rho_C$} (u);
      \draw[right hook->] (u) to[bend left=6] node[pos=.5,auto] {$\scriptstyle \id\otimes\iota$} (r);
      \draw[right hook->] (l) to[bend right=6] node[pos=.5,auto,swap] {$\scriptstyle \iota$} (d);
      \draw[->] (d) to[bend right=6] node[pos=.5,auto,swap] {$\scriptstyle \rho_B$} (r);
    \end{tikzpicture}
  \end{equation*}
  commute.
\end{proof}

\pf{th:allpres}
\begin{th:allpres}
  $\cat{cpct}(A)$ is a particular instance of $\cat{cpct}(\rho)$ (for trivial $\bG$). On the other hand, this latter category can be recovered from $\cC^{*\bG}_1$ through a double iteration of the comma-category construction:
  \begin{equation*}
    \cat{cpct}(\rho)\simeq \left(A^+\downarrow \cC^{*\bG}_1\right) \downarrow \left(A^+\to M(A)\right),
  \end{equation*}
  where $A^+$ is regarded as an object in $\cC^{*\bG}_1$ in the obvious fashion. Since local presentability transports over to comma categories \cite[Proposition 1.57]{ar}, it will suffice to focus on $\cC^{*\bG}_1$.

  We already know from \Cref{cor:ccmpl-noa} that $\cC^{*\bG}_{1}$ is cocomplete, so by \cite[Theorem 1]{ar-lg} it will be enough to argue that it is also $\cM$-locally generated for the (surjection,injection) factorization system $(\cE,\cM)$ of \Cref{pr:surjinj}. The claim is twofold.
  \begin{enumerate}[(1)]
  \item\label{item:9} For some regular cardinal $\kappa$ depending only on $\bG$, every object in $\cC^{*\bG}_1$ is the closed $\kappa$-directed union of its subobjects generated, as $C^*$-algebras, by fewer than $\kappa$ elements; we refer to these as {\it $\kappa$-capped} objects.
  \item\label{item:10} Every object of $\cC^{*\bG}_1$ is $\cM$-generated in the sense of \Cref{def:lg} \Cref{item:12}.
  \end{enumerate}
  Given \Cref{item:9} and \Cref{item:10}, the conclusion follows: for $\kappa$ as in \Cref{item:9}, there is, by \Cref{item:10}, a regular cardinal $\kappa'>\kappa$ such that all $\kappa$-capped objects are $\kappa'$-presentable (we can always raise the index of generation by \cite[Definition 1.13, last paragraph]{ar}). But then $\cC^{*\bG}_1$ is $\cM$-locally $\kappa'$-generated by definition, and we are done.

  It thus remains to prove \Cref{item:9} and \Cref{item:10}. The former is relegated to \Cref{le:univk}, so the rest of the proof is devoted to the latter: every $\cC^{*\bG}_1$-object is generated with respect to the class of injective morphisms.

  Let
  \begin{equation}\label{eq:atocolim}
    A\to B:=\varinjlim_i B
  \end{equation}
  be a morphism in $\cC^{*\bG}_1$, where the colimit (again in $\cC^{*\bG}_1$) on the right-hand side is $\kappa$-directed and its connecting morphisms $B_i\to B_j$, $i\le j$ are injective.

  We have already noted that $\cC^*_1$ is locally $\aleph_1$-presentable, so every object therein is presentable \cite[Remark 1.30 (1)]{ar}. It follows that for some $i$, the morphism \Cref{eq:atocolim} factors (uniquely, by the injectivity of $B_i\to B$) through $B_i$. We will be done once we show that the resulting ($C^*$, thus far) morphism $A\to B_i$ is $\bG$-equivariant. This, though, follows from the fact that \Cref{eq:atocolim} itself is equivariant by assumption, and hence
  \begin{equation*}
    \begin{tikzpicture}[auto,baseline=(current  bounding  box.center)]
      \path[anchor=base] 
      (0,0) node (l) {$A$}
      +(2,.5) node (u) {$M(C_0(\bG)\otimes A)$}
      +(4,-.5) node (d) {$B_i$}
      +(6,0) node (r) {$M(C_0(\bG)\otimes B_i)$}
      ;
      \draw[->] (l) to[bend left=6] node[pos=.5,auto] {$\scriptstyle $} (u);
      \draw[->] (u) to[bend left=6] node[pos=.5,auto] {$\scriptstyle $} (r);
      \draw[->] (l) to[bend right=6] node[pos=.5,auto,swap] {$\scriptstyle $} (d);
      \draw[->] (d) to[bend right=6] node[pos=.5,auto,swap] {$\scriptstyle $} (r);
    \end{tikzpicture}
  \end{equation*}
  commutes after further composition with the {\it injection} (\Cref{le:isfnc})
  \begin{equation*}
    M(C_0(\bG)\otimes B_i)\to M(C_0(\bG)\otimes B).
  \end{equation*}
  This completes the proof of the theorem, modulo \Cref{le:univk}. 
\end{th:allpres}

\begin{lemma}\label{le:univk}
  Let $\bG$ be a locally compact quantum group. There is a regular cardinal $\kappa=\kappa(\bG)$ such that every object
  \begin{equation*}
    \rho:A\circlearrowleft\bG\in \cC^{*\bG}_1
  \end{equation*}
  is the closed $\kappa$-directed union of its subobjects generated, as $C^*$-algebras, by fewer than $\kappa$ elements.
\end{lemma}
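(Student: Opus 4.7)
The plan is to choose $\kappa := (\mathrm{dens}(C_0(\bG)) + \aleph_1)^+$, a regular cardinal strictly larger than both $\aleph_0$ and the density character of the $C^*$-algebra $C_0(\bG)$, depending only on $\bG$ as required. Given $\rho:A\circlearrowleft \bG$ in $\cC^{*\bG}_1$ and a subset $S\subseteq A$ with $|S|<\kappa$, I will construct a $\bG$-stable unital $C^*$-subalgebra $B\subseteq A$ containing $S$ of density character $<\kappa$, hence generated by fewer than $\kappa$ elements. Since every element of $A$ lies in some such $B$, and since the family of such $B$'s is $\kappa$-directed (a union of fewer than $\kappa$ of them has cardinality $<\kappa$ by the regularity of $\kappa$, and can be fed back into the construction), this will display $A$ as the closed $\kappa$-directed union of its $<\kappa$-generated subobjects along injective inclusions.

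The construction is a L\"owenheim--Skolem-style closure with three clauses designed to ensure $\bG$-stability. Fix a norm-dense $D\subseteq C_0(\bG)$ of cardinality $<\kappa$ and set $S_0 := S\cup S^*\cup\{1\}$. Inductively, let $S_{n+1}$ consist of $S_n$ together with the $\bQ(i)$-$*$-subalgebra it generates and, for each triple $(b,c,k)\in S_n\times D\times \bN$, three finite subsets of $A$:
\begin{itemize}
\item $F_{b,c,k}^+$, the $A$-tensorands in some $\tfrac{1}{k}$-close elementary-tensor approximation of $\rho(b)(c\otimes 1)\in C_0(\bG)\otimes A$ (this containment being \Cref{def:act}\ref{item:7});
\item $F_{b,c,k}^-$, playing the analogous role for $(c\otimes 1)\rho(b)$;
\item $G_{b,c,k}$, the $A$-tensorands in an approximation $c\otimes b \approx \sum \rho(a_i)(c_i\otimes 1)$, whose existence is the action-continuity axiom \Cref{def:act}\ref{item:8} for the ambient $\rho$ on $A$.
\end{itemize}
A routine cardinal count gives $|S_{n+1}|\le \max(|S_n|,|D|,\aleph_0)<\kappa$, so $S_\omega := \bigcup_n S_n$ has cardinality $<\kappa$; $S_\omega$ is a $\bQ(i)$-$*$-subalgebra of $A$, and I take $B$ to be its norm closure, which is then a unital $C^*$-subalgebra of density $<\kappa$ with $S_\omega$ norm-dense.

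It remains to verify that $\rho$ restricts to a $\bG$-action on $B$. The $F^\pm$-clauses place $\rho(b)(c\otimes 1)$ and $(c\otimes 1)\rho(b)$ in the closed subspace $C_0(\bG)\otimes B\subseteq C_0(\bG)\otimes A$ for all $b\in S_\omega$ and $c\in D$, and then for all $c\in C_0(\bG)$ by density; factoring $\rho(b)(c\otimes b') = \rho(b)(c\otimes 1)(1\otimes b')$ (and its left-sided analogue) yields $\rho(S_\omega)\subseteq M(C_0(\bG)\otimes B)$, embedded inside $M(C_0(\bG)\otimes A)$ via \Cref{le:isfnc}, after which $*$-algebra closure and norm continuity of $\rho$ propagate the containment to all of $B$. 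Coassociativity for $\rho|_B$ descends from that of $\rho$ on $A$ by a second application of the same embedding. The remaining axiom $\overline{\rho(B)(C_0(\bG)\otimes 1)} = C_0(\bG)\otimes B$ is where the $G_{b,c,k}$-clauses earn their keep: they place each $c\otimes b$ with $c\in D$ and $b\in S_\omega$ inside $\overline{\rho(B)(C_0(\bG)\otimes 1)}$, after which the densities of $D$ in $C_0(\bG)$ and of $S_\omega$ in $B$ cover all of $C_0(\bG)\otimes B$. The main obstacle I anticipate is exactly this continuity step: $\overline{\rho(B)(C_0(\bG)\otimes 1)}$ is not a priori closed under right multiplication by $1\otimes B$, so one cannot simply restrict the analogous property of $\rho$ on $A$; building the $G_{b,c,k}$'s into the closure is precisely the device that bypasses this difficulty.
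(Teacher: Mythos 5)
Your proposal is correct and takes essentially the same route as the paper: a recursive, L\"owenheim--Skolem-style enlargement that alternately enforces $\rho$-stability (your $F^{\pm}$-clauses, playing the role of the paper's Step (1), which instead adjoins the slices $(\omega\otimes\id)\rho(B_n)(C_0(\bG)\otimes \bC)$ for $\omega\in C_0(\bG)^*$) and the continuity condition via the ambient axiom \Cref{def:act}\Cref{item:8} (your $G$-clauses, matching the paper's Step (2)), with the same cardinality bookkeeping and a harmless, arguably cleaner, choice of closure data. The supporting verifications you sketch --- multiplier membership through $\rho(b)(c\otimes b')=\rho(b)(c\otimes 1)(1\otimes b')$, coassociativity through the embedding of \Cref{le:isfnc}, the explicit $\kappa$ and the directedness remark --- are consistent with (indeed slightly more explicit than) the paper's treatment.
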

\begin{proof}
  We want to produce, for each $a\in A$, a unital $C^*$-subalgebra $a\in B\subseteq A$ generated by fewer than $\kappa$ elements such that 
  \begin{enumerate}[(a)]
  \item\label{item:15} the action structure map $\rho:A\to M_A(C_0(\bG)\otimes A)$ maps $B$ to
    \begin{equation*}
      M_B(C_0(\bG)\otimes B)\subseteq M_A(C_0(\bG)\otimes A)
    \end{equation*}
    (making implicit use of the embedding claim in \Cref{le:isfnc});
  \item\label{item:16} and furthermore, the action-continuity condition
    \begin{equation*}
      \overline{\rho(B)(C_0(\bG)\otimes \bC)}^{\|\cdot \|} = C_0(\bG)\otimes B
    \end{equation*}
    is met.
  \end{enumerate}
  We will start with the unital $C^*$-subalgebra of $A$ generated by $a$ and enlarge it recursively. As it will be apparent from the construction that the procedure produces $C^*$-algebras generated by $<\kappa$ elements for an appropriately large cardinal $\kappa$ depending only on $\bG$, we henceforth omit any mention of cardinals, referring only to ``small'' $C^*$-subalgebras; this means generated by $<\kappa$ elements for some large but fixed cardinal number depending on nothing but $\bG$.

  The above-mentioned enlargement procedure is recursive, taking turns in ensuring we have \Cref{item:15} and \Cref{item:16} and passing to a colimit. The two conditions are each provided by a separate recursive construction; we describe these in turn. 

  {\bf Step (1): enlarging a small $B'\subseteq A$ to a small $B\subseteq A$ satisfying \Cref{item:15}.} This is done recursively, as follows:
  \begin{itemize}
  \item Start with $B_0:=B'$.
  \item Let $B_1$ be the $C^*$-subalgebra of $A$ generated by $B_0$ and 
    \begin{equation*}
      (\omega\otimes\id)\rho(B_0)(C_0(\bG)\otimes \bC)\subseteq A
    \end{equation*}
    as $\omega$ ranges over $C_0(\bG)^*$.
  \item Similarly, $B_2$ is the $C^*$-subalgebra of $A$ generated by $B_1$ and
    \begin{equation*}
      (\omega\otimes\id)\rho(B_1)(C_0(\bG)\otimes \bC)\subseteq A,\ \omega\in C_0(\bG)^*.
    \end{equation*}
  \item Continuing the process, we obtain a unital $C^*$-subalgebra
    \begin{equation*}
      B:=\overline{\bigcup_{i\ge 0} B_i}^{\|\cdot\|}
    \end{equation*}
    of $A$. 
  \end{itemize}
  It is clear by construction that we have
  \begin{equation*}
    \rho(B)(C_0(\bG)\otimes \bC)\subseteq C_0(\bG)\otimes B,
  \end{equation*}
  concluding Step (1).

  {\bf Step (2): enlarging a small $B'\subseteq A$ to a small $B\subseteq A$ satisfying both \Cref{item:15} and \Cref{item:16}.}

  Initially, $B_0:=B'$ might have the drawback that
  \begin{equation*}
    \overline{\rho(B_0)(C_0(\bG)\otimes \bC)}^{\|\cdot\|}\subsetneq C_0(\bG)\otimes B_0.
  \end{equation*}
  Nevertheless, because we do have \Cref{eq:aactcont}, $B_0$ can be enlarged to a (still small) $C^*$-algebra $B_0'$ with
  \begin{equation*}
    \overline{\rho(B_0')(C_0(\bG)\otimes \bC)}^{\|\cdot\|} \supseteq C_0(\bG)\otimes B_0.
  \end{equation*}
  Furthermore, Step (1) allows us to further extend $B_0'$ to $B_1$ satisfying both \Cref{item:15} and 
  \begin{equation*}
    \overline{\rho(B_1)(C_0(\bG)\otimes \bC)}^{\|\cdot\|} \supseteq C_0(\bG)\otimes B_0.
  \end{equation*}
  As before, continue the process recursively: $B_{n+1}$ is to $B_n$ as $B_1$ is to $B_0$. The resulting (small) $C^*$-algebra
  \begin{equation*}
    B:=\overline{\bigcup_{i\ge 0} B_i}^{\|\cdot\|}
  \end{equation*}
  will have both desired properties: \Cref{item:15} and \Cref{item:16}.
\end{proof}

%%%%%%%%%%%%%%%%%%%%%%%%%%%%%%%%%%%%%%%%%%%%%%%%%%%%%%%%%%%%%%%%%%%%%%%%%%%%%
%%%%%%%%%%%%%%%%%%%%%%%%%%%%%%%%%%%%%%%%%%%%%%%%%%%%%%%%%%%%%%%%%%%%%%%%%%%%%
\section{Limit preservation and (co)monads}\label{se:lim}

To reiterate and slightly amplify \Cref{re:clsdvr}, \Cref{th:ccmpl} and \Cref{cor:ladj} are, jointly, analogous to \cite[Theorem 3.2.4]{dvr-bk}: for any topological group $\bG$, the forgetful functor
\begin{equation}\label{eq:gtop}
  \cat{Top}^{\bG}\to \cat{Top}
\end{equation}
fro topological $\bG$-spaces to plain topological spaces has a left adjoint and creates colimits. It is natural to consider the dual problem of whether it creates/preserves {\it limits}, has a {\it right} adjoint, etc. As it happens, when $\bG$ is locally compact (which setup we are generalizing here), all of these questions have affirmative answers: \cite[Theorem 3.4.3]{dvr-bk}.

Dualizing from spaces to algebras, the analogue for us would be the question of whether the forgetful functor
\begin{equation}\label{eq:c1c1}
  \cC^{*\bG}_1\to \cC^*_1
\end{equation}
creates and/or preserves {\it limits}. There are several confounding factors making that discussion more involved than that of \Cref{eq:gtop}.

First, \cite[Proposition 3.4.2]{dvr-bk} says that \Cref{eq:gtop} {\it always} creates coproducts, whether the topological group $\bG$ is locally compact or not. In \Cref{eq:c1c1}, on the other hand, in working with {\it unital} $C^*$-algebras, we are intuitively dualizing the category $\cat{CH}$ of {\it compact Hausdorff} spaces. While in \cat{Top} coproducts are simply disjoint unions, in \cat{CH} they are formed by first taking the disjoint union and then applying the {\it Stone-\v{C}ech compactification}
\begin{equation*}
  \cat{Top}\ni \bX\mapsto \beta \bX\in \cat{CH},
\end{equation*}
i.e. the left adjoint of the inclusion functor $\cat{CH}\subset \cat{Top}$ (\cite[\S 38]{mnk} and \cite[\S V.8, following Corollary]{mcl}). In that setup, even classically (for compact $\bG$ and {\it commutative} unital $C^*$-algebras), the analogue of functor \Cref{eq:c1c1} cannot create coproducts.

\begin{example}\label{ex:notcoprod}
  Consider the additive group
  \begin{equation*}
    \bG:=\bZ_2=\varprojlim_n \bZ/2^n
  \end{equation*}
  of 2-adic integers, acting on each of its quotient groups $\bX_n:=\bZ/2^n$ by translation.

  Consider a continuous $\{0,1\}$-valued function $f$ on
  \begin{equation*}
    \beta\bY\quad\text{where}\quad \bY:=\coprod_n \bX_n
  \end{equation*}
  defined by extending the function taking the value $0$ at every trivial element $0_n\in \bX_n$ and 1 elsewhere on $\bY$. We will make some reference to {\it ultrafilters} on a set $\bS$ (i.e. the elements of $\beta\bS$), for which the reader will find a very pithy recollection in \cite[Introduction]{brgl}. For any such ultrafilter
  \begin{equation*}
    \cU\in \beta\bN
  \end{equation*}
  the {\it $\cU$-limit} \cite[\S 7]{brgl} 
  \begin{equation*}
    x:=\cU-\lim_n 0_n\in \beta\bY
  \end{equation*}
  belongs by construction to $f^{-1}(0)$. On the other hand, because every subgroup
  \begin{equation*}
    2^m\bZ_2\subset \bZ_2,\ m\in \bN
  \end{equation*}
  moves all $0_n$ for sufficiently large $n$, the action of any such subgroup on $x$ moves the latter into $f^{-1}(1)$.

  The conclusion is that no neighborhood of $0\in \bZ_2$, no matter how small, keeps $x$ in the neighborhood $f^{-1}(0)\ni x$. In short, the action
  \begin{equation*}
    \bZ_2\times \beta\bY\to \beta \bY
  \end{equation*}
  is not continuous, and hence \Cref{eq:c1c1} cannot create arbitrary (infinite) products.
\end{example}

In light of \Cref{ex:notcoprod}, it is only reasonable to inquire into {\it finite} limit preservation by \Cref{eq:c1c1}. With that caveat, products are unproblematic:

\begin{proposition}\label{pr:finprod}
  For any locally compact quantum group $\bG$ the forgetful functor $\cC^{*\bG}_1\to \cC^*_1$ creates finite products.
\end{proposition}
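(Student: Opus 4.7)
Finite products in $\cC^*_1$ are computed as direct sums $A\oplus B$ (with unit $(1_A,1_B)$ and projections $\pi_A,\pi_B$), so the task reduces to showing that for any pair $\rho_A:A\circlearrowleft\bG$ and $\rho_B:B\circlearrowleft\bG$ in $\cC^{*\bG}_1$, the algebra $A\oplus B$ carries a \emph{unique} $\bG$-action rendering $\pi_A,\pi_B$ equivariant, and that with this action it is the product in $\cC^{*\bG}_1$. The whole proof hinges on one standard distributivity fact, which I would separate out first: minimal $C^*$ tensor products and multiplier algebras both distribute over \emph{finite} direct sums, so that
\begin{equation*}
  C_0(\bG)\otimes (A\oplus B)\;\cong\;(C_0(\bG)\otimes A)\oplus(C_0(\bG)\otimes B),
\end{equation*}
and consequently
\begin{equation*}
  M(C_0(\bG)\otimes (A\oplus B))\;\cong\;M(C_0(\bG)\otimes A)\oplus M(C_0(\bG)\otimes B).
\end{equation*}
(Recall \Cref{re:when1}: since $A$, $B$ and $A\oplus B$ are unital, the $M_{(-)}$-subscripts in \Cref{def:act} collapse to plain multiplier algebras.)

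\textbf{Existence of the action.} Using the above identification I define
\begin{equation*}
  \rho_{A\oplus B}\,:\,A\oplus B\;\longrightarrow\;M(C_0(\bG)\otimes(A\oplus B)),
  \qquad (a,b)\;\longmapsto\;(\rho_A(a),\rho_B(b)).
\end{equation*}
Coassociativity of $\rho_{A\oplus B}$ is componentwise-inherited from $\rho_A$ and $\rho_B$ after applying the analogous distributivity for $C_0(\bG)^{\otimes 2}\otimes(A\oplus B)$. The continuity condition \Cref{eq:aactcont} for $\rho_{A\oplus B}$ likewise reduces to the two separate conditions for $\rho_A$ and $\rho_B$, since the closed span decomposes as a direct sum. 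Equivariance of $\pi_A$ and $\pi_B$ is immediate from the definition.

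\textbf{Uniqueness and universal property.} If $\rho'$ is any $\bG$-action on $A\oplus B$ making both projections equivariant, then the commutative squares defining equivariance pin down the two components $(\id\otimes\pi_A)\rho'$ and $(\id\otimes\pi_B)\rho'$ as $\rho_A\pi_A$ and $\rho_B\pi_B$ respectively. Under the direct-sum decomposition of $M(C_0(\bG)\otimes(A\oplus B))$ these two components jointly determine $\rho'$, forcing $\rho'=\rho_{A\oplus B}$. This also gives the universal property: for a third object $\rho_C:C\circlearrowleft\bG$ with equivariant maps $f_A:C\to A$ and $f_B:C\to B$, the unique $\cC^*_1$-morphism $f=(f_A,f_B):C\to A\oplus B$ is automatically equivariant, as the two components of $(\id\otimes f)\rho_C$ agree with $\rho_Af_A$ and $\rho_Bf_B$, hence with the components of $\rho_{A\oplus B}f$.

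\textbf{Expected obstacle.} There is no serious analytic obstacle here; the only point that requires a moment of care is checking the distributivity statements at the level of multiplier algebras (and their tensor powers) that underlie the whole argument. Once that is in hand, everything is bookkeeping on two coordinates, and the argument for a binary product extends verbatim to any finite number of factors; the empty product is the terminal (zero) object of $\cC^*_1$, which carries a unique action and is trivially terminal in $\cC^{*\bG}_1$.
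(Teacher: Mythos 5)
Your proposal is correct and follows essentially the same route as the paper: the paper isolates the same distributivity fact as \Cref{le:finprod} (preservation of finite products by $C_0(\bG)\otimes-$ and by the functor \Cref{eq:isfnc}), then uses the resulting identification $M(C_0(\bG)\otimes\prod_iA_i)\cong\prod_iM(C_0(\bG)\otimes A_i)$ to define the action componentwise and to check coassociativity, the continuity condition \Cref{eq:aactcont}, and uniqueness exactly as you do. The only cosmetic difference is that the paper phrases the uniqueness/universal-property bookkeeping via the product's universal property rather than via explicit coordinates.
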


We leave the following simple preliminary remark to the reader.

\begin{lemma}\label{le:finprod}
  For any $C^*$-algebra $C\in \cC^*$ the endofunctors $C\otimes -$ and \Cref{eq:isfnc} of $\cC^*$ both preserve finite products.  \qedhere
\end{lemma}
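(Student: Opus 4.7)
I would treat the two endofunctors in parallel, checking that each preserves the terminal object and binary products, which together yield all finite products in $\cC^*$.

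\emph{Terminal object.} Morphisms in $\cC^*$ need not preserve units, so the zero algebra $\{0\}$ receives a unique (zero) morphism from every object and is therefore terminal. Both $C\otimes\{0\}=\{0\}$ and, using that $\{0\}$ is already unital so that $\widetilde{\{0\}}=\{0\}$, $M_{\{0\}}(C\otimes\{0\})=\{0\}$ are immediate.

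\emph{Binary products.} In $\cC^*$ the binary product of $A$ and $B$ is the $C^*$-direct sum $A\oplus B$ with coordinate projections $\pi_A,\pi_B$. For the first functor I would invoke the standard distributivity of the minimal tensor product over finite $C^*$-direct sums (as recorded in \cite[\S II.9.6.5]{blk}): the map
\begin{equation*}
  C\otimes(A\oplus B)\;\xrightarrow{\;\cong\;}\;(C\otimes A)\oplus(C\otimes B)
\end{equation*}
with components $\id\otimes\pi_A$ and $\id\otimes\pi_B$ is an isomorphism, so $C\otimes-$ sends the product cone to a product cone.

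\emph{Multiplier functor.} For the second functor I would combine the well-known identification $M(X\oplus Y)=M(X)\oplus M(Y)$ for $C^*$-direct sums with the distributivity just noted to obtain
\begin{equation*}
  M\bigl(C\otimes(A\oplus B)\bigr)\;=\;M(C\otimes A)\oplus M(C\otimes B).
\end{equation*}
It then remains to intersect with the multiplier-subalgebra condition from \Cref{eq:msub}. Writing an element as $(m_1,m_2)$, the requirement that $(m_1,m_2)(C\otimes(A\oplus B))\subseteq C\otimes(A\oplus B)$ and its right-handed analogue decompose coordinate-wise into $m_i(C\otimes A_i)\subseteq C\otimes A_i$ for $A_1=A$, $A_2=B$; that is, $m_i\in M_{A_i}(C\otimes A_i)$. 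Hence
\begin{equation*}
  M_{A\oplus B}\bigl(C\otimes(A\oplus B)\bigr)\;=\;M_A(C\otimes A)\oplus M_B(C\otimes B),
\end{equation*}
and the projections of this decomposition agree with those induced by $\pi_A,\pi_B$ via the functoriality established in \Cref{le:isfnc}.

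\emph{Main obstacle.} The only subtle point is bookkeeping around unitizations: the smallest unitization $\widetilde{A\oplus B}$ adjoins a single unit, whereas $\widetilde{A}\oplus\widetilde{B}$ has two orthogonal adjoined units and strictly contains $\widetilde{A\oplus B}$. However, both algebras have $C\otimes(A\oplus B)$ as an essential ideal, so the multiplier condition $xJ+Jx\subseteq J$ cuts out the same $C^*$-subalgebra regardless of which enveloping unitization is used, and the coordinatewise decomposition described above then goes through without modification.
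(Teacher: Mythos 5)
The paper offers no proof of \Cref{le:finprod} (it is explicitly left to the reader), so your argument must stand on its own. Its skeleton --- terminal object plus binary products, distributivity of $C\otimes-$ and of multiplier algebras over finite direct sums, then a coordinatewise analysis of the relative-multiplier condition --- is exactly the routine argument intended, and the terminal-object and $C\otimes-$ parts are fine (though \cite[\S II.9.6.5]{blk} concerns inductive limits; distributivity over finite direct sums follows, e.g., from the split exact sequence $0\to C\otimes A\to C\otimes(A\oplus B)\to C\otimes B\to 0$). The genuine gap is in the step identifying $M_{A\oplus B}(C\otimes(A\oplus B))$: the condition you impose, that $(m_1,m_2)$ multiply $C\otimes(A\oplus B)$ into itself, is satisfied by \emph{every} element of $M(C\otimes(A\oplus B))$ --- that is what being a multiplier means, and more generally any multiplier maps every closed ideal of the algebra into itself --- so it cuts out nothing. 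Likewise $m_i(C\otimes A_i)\subseteq C\otimes A_i$ holds for all $m_i\in M(C\otimes A_i)$ and does not characterize $M_{A_i}(C\otimes A_i)$. As written, your computation would yield $M_{A\oplus B}(C\otimes(A\oplus B))=M(C\otimes A)\oplus M(C\otimes B)$, which is strictly too big whenever $A$ or $B$ is non-unital: already for $C=\bC$ one has $M_A(\bC\otimes A)=A$, not $M(A)$.

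The condition that actually does the work --- and the one the paper uses operationally, e.g. in the proofs of \Cref{le:isfnc} and \Cref{le:finlim} --- is the test against the unit of the \emph{affected} tensorand: $m\in M_D(C\otimes D)$ precisely when $m(c\otimes 1)$ and $(c\otimes 1)m$ lie in $C\otimes D$ for all $c\in C$, where $1$ is the unit of $\widetilde{D}$ (equivalently of $M(D)$); in the notation of \Cref{eq:msub}, the ideal $C\otimes D$ must absorb products with all of $C\otimes\widetilde{D}$, not merely with itself (the displayed condition $xJ+Jx\subseteq J$ is vacuous for the reason above, so it should not be taken literally). With this correction your coordinatewise argument goes through verbatim: under $M(C\otimes(A\oplus B))\cong M(C\otimes A)\oplus M(C\otimes B)$ the element $c\otimes 1_{\widetilde{A\oplus B}}$ corresponds to $(c\otimes 1,\,c\otimes 1)$, so $(m_1,m_2)$ passes the test if and only if each $m_i$ does, giving $M_{A\oplus B}(C\otimes(A\oplus B))=M_A(C\otimes A)\oplus M_B(C\otimes B)$ with the projections induced by $\id\otimes\pi_A$, $\id\otimes\pi_B$ as you say. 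Your closing remark about the two unitizations $\widetilde{A\oplus B}$ and $\widetilde{A}\oplus\widetilde{B}$ is then both true and relevant for the corrected condition (testing against $c\otimes(1,1)$ or against $c\otimes(1,0)$ and $c\otimes(0,1)$ separately cuts out the same subalgebra of $M(C\otimes(A\oplus B))$), whereas for the condition you wrote it was vacuous.
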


\pf{pr:finprod}
\begin{pr:finprod}
  Let $\rho_i:A_i\circlearrowleft\bG$, $i\in I$ be a family of $\bG$-actions on unital $C^*$-algebras, and write
  \begin{equation*}
    \cC^*_1\ni A:=\prod_i A_i
  \end{equation*}
  for the product, equipped with the product-structure morphisms $\pi_i:A\to A_i$, $i\in I$. By \Cref{le:finprod} we have a canonical identification (the finiteness of $I$ is crucial!)
  \begin{equation*}
    M(C_0(\bG)\otimes A)
    =
    M\left(C_0(\bG)\otimes\prod A_i\right)
    \cong
    \prod M(C_0(\bG)\otimes A_i),
  \end{equation*}
  so there is a unique morphism $\rho:A\to M(C_0(\bG)\otimes A)$ making all squares
  \begin{equation*}
    \begin{tikzpicture}[auto,baseline=(current  bounding  box.center)]
      \path[anchor=base] 
      (0,0) node (l) {$A$}
      +(2,.5) node (u) {$M(C_0(\bG)\otimes A)$}
      +(4,-.5) node (d) {$A_i$}
      +(6,0) node (r) {$M(C_0(\bG)\otimes A_i)$}
      ;
      \draw[->] (l) to[bend left=6] node[pos=.5,auto] {$\scriptstyle \rho$} (u);
      \draw[->] (u) to[bend left=6] node[pos=.5,auto] {$\scriptstyle \id\otimes\pi_i$} (r);
      \draw[->] (l) to[bend right=6] node[pos=.5,auto,swap] {$\scriptstyle \pi_i$} (d);
      \draw[->] (d) to[bend right=6] node[pos=.5,auto,swap] {$\scriptstyle \rho_i$} (r);
    \end{tikzpicture}
  \end{equation*}
  commute. Its coassociativity follows from the fact that
  \begin{equation*}
    (\id\otimes\rho)\rho\quad\text{and}\quad (\Delta\otimes\id)\rho:A\to M(C_0(\bG)^{\otimes 2}\otimes A)
  \end{equation*}
  both fit as the unique upper left-hand map rendering all 
  \begin{equation*}
    \begin{tikzpicture}[auto,baseline=(current  bounding  box.center)]
      \path[anchor=base] 
      (0,0) node (l) {$A$}
      +(3,.5) node (u) {$M(C_0(\bG)^{\otimes 2}\otimes A)$}
      +(4,-.5) node (d) {$A_i$}
      +(7,0) node (r) {$M(C_0(\bG)^{\otimes 2}\otimes A_i)$}
      ;
      \draw[->] (l) to[bend left=6] node[pos=.5,auto] {$\scriptstyle $} (u);
      \draw[->] (u) to[bend left=6] node[pos=.5,auto] {$\scriptstyle \id\otimes\pi_i$} (r);
      \draw[->] (l) to[bend right=6] node[pos=.5,auto,swap] {$\scriptstyle \pi_i$} (d);
      \draw[->] (d) to[bend right=6] node[pos=.5,auto,swap] {$\scriptstyle (\id\otimes\rho_i)\rho_i = (\Delta\otimes\id)\rho_i$} (r);
    \end{tikzpicture}
  \end{equation*}
  commutative, while the continuity condition \Cref{eq:aactcont} for $\rho$ follows from the analogous conditions for its individual $A_i$-components (the finiteness of $I$ once more being essential).
\end{pr:finprod}

In the presence of finite products (taken care of by \Cref{pr:finprod}), finite-limit creation/preservation amounts to creating, or, respectively preserving, either pullbacks or, equivalently, equalizers \cite[Theorem 12.4 and Proposition 13.3]{ahs}.

Changing perspective slightly, recall that the issue of whether or not monomorphisms in $\cC^{*\bG}_1$ are injective was left standing in \Cref{se:epimono}; as \Cref{le:monchar} makes clear, in order to make that assessment one would need to understand pullbacks in $\cC^{*\bG}_1$. Pullbacks (or, what amounts essentially to the same thing, equalizers) are also what mandates local compactness in the aforementioned \cite[Theorem 3.4.3]{dvr-bk}: while \Cref{eq:gtop} always creates coproducts \cite[Theorem 3.4.2]{dvr-bk}, it does not, in general, preserve pushouts \cite[\S 3.4.4 and Theorem 3.4.5]{dvr-bk}.

Here, the nature problem is slightly different: we are already working with {\it locally compact} quantum groups, so one should presumably expect equalizer creation/preservation for $\cC^{*\bG}_1\to \cC^*_1$ (recall that this forgetful functor is to be thought of as a bi-opposite analogue of \Cref{eq:gtop}, hence the passage from colimits to limits, etc.). The {\it quantum} setup, though, has its own peculiar difficulties:
\begin{enumerate}[(a)]
\item\label{item:22} the tensor-product functor $C_0(\bG)\otimes -$ does not play well with equalizers;
\item\label{item:23} and in general, the structure map
  \begin{equation*}
    \rho:A\to M(C_0(\bG)\otimes A)
  \end{equation*}
  need not be injective (this issue occasionally comes up in the types of arguments we carry out below).

  In relation to this latter observation, it is not uncommon for some authors to either require injectivity for the classes of actions of interest (what \cite[D\'efinition 0.2]{bs-cross} or \cite[\S 3.1]{vrgn-phd} would call {\it $C_0(\bG)$-algebras}, for instance, require injectivity), or to at least specialize to injective actions in specific results (e.g. \cite[\S 6]{mrw}).
\end{enumerate}

There is a convenient way to address both issues simultaneously, by specializing to a class of LCQGs still wider than that of classical locally compact groups (\cite[Definition 3.1]{bt}):

\begin{definition}\label{def:coam}
  A locally compact quantum group $\bG$ is {\it coamenable} if $C_0(\bG)$ has a {\it counit}: a $C^*$-algebra morphism
  \begin{equation*}
    \varepsilon=\varepsilon_{\bG}:C_0(\bG)\to \bC
  \end{equation*}
  such that
  \begin{equation}\label{eq:coun}
    (\id\otimes\varepsilon)\Delta_{\bG}
    = \id =
    (\varepsilon\otimes\id)\Delta_{\bG} = \id.
  \end{equation}

  As explained in the discussion following \cite[Definition 3.1]{bt}, it is enough to require one of the equalities in \Cref{eq:coun}; the other follows. 
\end{definition}

For coamenable LCQGs the defining property \Cref{eq:coun} of the counit extends to actions, familiar from the theory of {\it comodule algebras} over bialgebras \cite[\S 11.3]{rad}. We will also have to assume henceforth that our locally compact quantum groups are {\it regular} (\cite[D\'efinition 3.3]{bs-cross}, \cite[Definition 2.8]{vs-impr}). We refer to \cite[\S 2.1]{vs-impr} (and its sources) for the background necessary for the definition below; the notion will play here only a technical, black-box role.

\begin{definition}
  Let $\bG$ be an LCQG and denote by
  \begin{itemize}
  \item $L^2=L^2(\bG)$ the carrier Hilbert space of the GNS representation attached to the left Haar weight;
  \item and
    \begin{equation*}
      W=W_{\bG} \in B(L^2\otimes L^2)
    \end{equation*}
    the {\it multiplicative unitary} implementing the comultiplication of $\bG$ by
    \begin{equation*}
      C_0(\bG)\ni x\mapsto \Delta_{\bG}(x) = W^*(1\otimes x)W \in M(C_0(\bG)\otimes C_0(\bG))\subset B(L^2\otimes L^2).
    \end{equation*}
  \end{itemize}
  $\bG$ is {\it regular} if the norm-closed span of
  \begin{equation*}
    \{(\id\otimes\omega)(\text{flip}(W))\ |\ \omega\in K(L^2)^*\}
  \end{equation*}
  is precisely the $C^*$-algebra $K(L^2)$ of compact operators on $L^2$.
\end{definition}

\begin{proposition}\label{pr:coun}
  Let $\bG$ be a coamenable LCQG and $\varepsilon=\varepsilon_{\bG}$ its counit.
  \begin{enumerate}[(a)]
  \item\label{item:26} For any action $\rho:A\circlearrowleft \bG$ on a $C^*$-algebra the map
    \begin{equation*}
      (\varepsilon\otimes \id)\rho:M(A)\to M(A)
    \end{equation*}
    is the identity. 
  \item\label{item:27} If $\bG$ is in addition regular, then conversely, a non-degenerate $\rho:A\to M(C_0(\bG)\otimes A)$ satisfying conditions \Cref{item:6} and \Cref{item:7} of \Cref{def:act} is an action provided
    \begin{equation}\label{eq:eid}
      (\varepsilon\otimes \id)\rho = \id_{M(A)}. 
    \end{equation}
  \end{enumerate}
\end{proposition}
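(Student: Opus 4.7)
\textbf{Part (a)} is a direct slice-map argument off coassociativity. Define $\phi := (\varepsilon \otimes \id_A) \circ \rho : A \to M(A)$ and apply $\id_{C_0(\bG)} \otimes \varepsilon \otimes \id_A$ to both sides of $(\id \otimes \rho)\rho = (\Delta \otimes \id)\rho$. The right-hand side collapses to $\rho$ by the counit axiom $(\id \otimes \varepsilon)\Delta_{\bG} = \id$, while the left-hand side becomes $(\id \otimes \phi)\rho$. Hence $(\id \otimes \phi)\rho = \rho$ as maps $A \to M(C_0(\bG) \otimes A)$. Since $\id \otimes \phi$ is a unital $*$-morphism that fixes $c \otimes 1_{M(A)}$, it fixes every $\rho(a)(c \otimes 1)$; by the density \eqref{eq:aactcont} and continuity, $\id \otimes \phi$ is then the identity on all of $C_0(\bG) \otimes A$. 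Testing on a simple tensor $c \otimes a$ with $c \ne 0$ and invoking faithfulness of the spatial tensor product yields $\phi|_A = \id_A$, and uniqueness of strictly continuous extensions of non-degenerate morphisms upgrades this to $\phi = \id_{M(A)}$.

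\textbf{Part (b)} is the converse direction, for which regularity is essential. Given only coassociativity \ref{item:6}, the multiplier-property \ref{item:7}, non-degeneracy, and \eqref{eq:eid}, we must derive the continuity axiom \eqref{eq:aactcont}. The inclusion $\overline{\rho(A)(C_0(\bG) \otimes \bC)}^{\|\cdot\|} \subseteq C_0(\bG) \otimes A$ is built into \ref{item:7}. For the reverse, set $X := \overline{\rho(A)(C_0(\bG) \otimes \bC)}^{\|\cdot\|}$ and combine coassociativity --- rephrased through $\Delta_{\bG}(x) = W^*(1 \otimes x)W$ --- with the regularity hypothesis that $\{(\id \otimes \omega)(\Sigma W) : \omega \in K(L^2)^*\}$ has norm-dense span in $K(L^2)$. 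The plan is to apply leg-numbered slice maps to the identity $(\id \otimes \rho)\rho = (\Delta_{\bG} \otimes \id)\rho$, extract approximations of arbitrary $c \otimes a \in C_0(\bG) \otimes A$ from the regularity-supplied dense supply of $W$-slices, and then invoke \eqref{eq:eid} to collapse the auxiliary $C_0(\bG)$-leg introduced by the $(\id \otimes \rho)$ side so that the resulting approximants land inside $X$. Equivalently, regularity furnishes an external source of density that compensates for the absence, a priori, of \eqref{eq:aactcont}, while the counit allows a ``left-inverse'' of the extra tensor factor generated during the calculation.

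\textbf{The principal obstacle} is part (b): the delicate coordination of leg-numbering conventions, of the multiplicative unitary $W$, of slice maps by functionals on $K(L^2)$, and of the counit identity, is notably more intricate than part (a). One further complication is that $\rho$ itself need not be injective in this setup (cf.\ the discussion following \Cref{item:23}), so naive cancellation arguments must be avoided and density/closure has to substitute for injectivity throughout. By contrast, part (a) is a three-line coassociativity unpacking once the slice $\id \otimes \varepsilon \otimes \id$ is applied.
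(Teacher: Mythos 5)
Your part (a) is correct, and it takes a mildly different route from the paper: you apply $\id\otimes\varepsilon\otimes\id$ to coassociativity to get $(\id\otimes\phi)\rho=\rho$ and then use the density \Cref{eq:aactcont} of $\rho(A)(C_0(\bG)\otimes\bC)$ to force $\id\otimes\phi=\id$ on $C_0(\bG)\otimes A$, whereas the paper applies $\varepsilon\otimes\varepsilon\otimes\id$ to coassociativity to see that $r=(\varepsilon\otimes\id)\rho$ is an \emph{idempotent} unital morphism with strictly dense image (density of the image again coming from \Cref{eq:aactcont}), and an idempotent that is the identity on a strictly dense set is the identity. Both arguments are sound and of comparable length; yours trades the idempotency trick for a slice-and-cancel step, and the final upgrade from $\phi|_A=\id_A$ to $\phi=\id_{M(A)}$ by strict continuity is fine.

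Part (b), however, is a genuine gap: what you offer is a plan, not a proof. The entire content of \Cref{item:27} is the derivation of the continuity axiom \Cref{eq:aactcont} from coassociativity, the multiplier condition, regularity and \Cref{eq:eid}, and your text only announces the strategy (``apply leg-numbered slice maps \ldots extract approximations \ldots invoke \Cref{eq:eid} to collapse the auxiliary leg'') without ever producing the estimate or algebraic identity that shows $C_0(\bG)\otimes A\subseteq\overline{\rho(A)(C_0(\bG)\otimes\bC)}$. This is exactly where regularity must be used in a quantitative way (via relations such as the norm-closed span of $\{(\id\otimes\omega)(\Sigma W)\}$ being $K(L^2)$), and carrying it out amounts to reproving \cite[Proposition 5.8]{bsv}; that computation is nontrivial and is precisely what your sketch omits. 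The paper's proof instead observes that \Cref{eq:eid} yields the \emph{weak continuity} condition discussed before \cite[Proposition 5.8]{bsv} and then invokes that proposition as a black box for regular quantum groups. (Note also that checking weak continuity is itself not entirely formal, since the counit of a coamenable $\bG$ need not be a normal functional, a point your outline does not address.) To repair your write-up, either cite the Baaj--Skandalis--Vaes result after verifying its hypothesis, or supply the full multiplicative-unitary computation you allude to.
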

\begin{proof}
  \begin{enumerate}[(a)]
  \item Denote
    \begin{equation*}
      r:=(\varepsilon\otimes \id)\rho : M(A)\to M(A).
    \end{equation*}
    This is a strictly-continuous unital $C^*$ morphism, and an application of $\varepsilon\otimes \id$ to the two sides of \Cref{eq:aactcont} shows that $r|_A$ has norm-dense image in $A$, so that $r$ has strictly-dense image. Next, composing
    \begin{equation*}
      (\id\otimes\rho)\rho = (\Delta_{\bG}\otimes\id)\rho
      : M(A)\to M(C_0(\bG)^{\otimes 2}\otimes A)
    \end{equation*}
    with $\varepsilon\otimes\varepsilon\otimes\id$ also shows (via \Cref{eq:coun}) that $r:M(A)\to M(A)$ is idempotent. But now we are done: being idempotent $r$ is the identity on a strictly-dense subset
    \begin{equation*}
      r(M(A))\subseteq M(A),
    \end{equation*}
    so it must be the identity everywhere by strict continuity.
  \item The assumption \Cref{eq:eid} implies the {\it weak continuity} condition of \cite[discussion preceding Proposition 5.8]{bsv}, so under the regularity assumption the conclusion follows from \cite[Proposition 5.8]{bsv}.
  \end{enumerate}
  
  This finishes the proof of the two claims. 
\end{proof}

\begin{remark}
  In \Cref{pr:coun} \Cref{item:27} regularity matters: the {\it quantum $E(2)$ group} studied in \cite{jac-phd} is
  \begin{itemize}
  \item coamenable \cite[Theorem 3.2.29]{jac-phd};    
  \item and {\it semi-regular} in the sense of \cite[Definition 1.3.3]{jac-phd} by \cite[Corollary 2.8.25]{jac-phd};
  \item so the example of \cite[Proposition 5.8]{bsv} applies. 
  \end{itemize}
  That example is of a ``not-quite-action'' that is easily checked to satisfy \Cref{eq:eid} but {\it not} \Cref{def:act} \Cref{item:8} (hence not an action).
\end{remark}

% % \begin{lemma}\label{le:coun}
% %   Let $\bG$ be a coamenable LCQG and $\varepsilon=\varepsilon_{\bG}$ its counit. For any action $\rho:A\circlearrowleft \bG$ on a $C^*$-algebra the map
% %   \begin{equation*}
% %     (\varepsilon\otimes \id)\rho:A\to M(A)
% %   \end{equation*}
% %   is the canonical inclusion.
% % \end{lemma}
% % \begin{proof}
% %   Denote
% %   \begin{equation*}
% %     r:=(\varepsilon\otimes \id)\rho : A\to M(A).
% %   \end{equation*}
% %   An application of $\varepsilon\otimes \id$ to the two sides of \Cref{eq:chn} shows that $r$ factors through a {\it surjection} $A\to A$, which we abusively denote by the same symbol. Next, composing
% %   \begin{equation*}
% %     (\id\otimes\rho)\rho = (\Delta_{\bG}\otimes\id)\rho
% %     : A\to M(C_0(\bG)^{\otimes 2}\otimes A)
% %   \end{equation*}
% %   with $\varepsilon\otimes\varepsilon\otimes\id$ also shows (via \Cref{eq:coun}) that $r:A\to A$ is idempotent. But now we are done: an idempotent surjection (on any set) is the identity. 
% % \end{proof}
% %

\Cref{pr:coun} will be particularly useful when attempting to restrict actions to $C^*$-subalgebras, ensuring that we again obtain actions.

\begin{lemma}\label{le:resact}
  Let $\bG$ be a regular coamenable LCQG and $\rho:A\circlearrowleft\bG$ an action. If $A'\le A$ is a $C^*$-subalgebra for which
  \begin{equation*}
    \rho(A')\subseteq M_{A'}(C_0(\bG)\otimes A')\le M_{A}(C_0(\bG)\otimes A)
  \end{equation*}
  and $\rho|_{A'}$ is non-degenerate as a map to $M(C_0(\bG)\otimes A')$, then the restriction $\rho':=\rho|_{A'}$ is a $\bG$-action.
\end{lemma}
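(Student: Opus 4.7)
The plan is to verify the three defining clauses of \Cref{def:act} for $\rho'$, leveraging \Cref{pr:coun} to sidestep a direct check of the action-continuity condition \Cref{item:8}. The three ingredients to settle are coassociativity \Cref{item:6}, the multiplier-range property \Cref{item:7}, and the density condition \Cref{item:8}. Non-degeneracy of $\rho'$ is given, and \Cref{item:7} is part of the hypothesis, so only \Cref{item:6} and \Cref{item:8} require work.

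For coassociativity, I would start from the coassociativity of $\rho$, viewed as an equality of morphisms $A\to M(C_0(\bG)^{\otimes 2}\otimes A)$, and restrict to $A'$. The key point is that because $\rho(A')\subseteq M_{A'}(C_0(\bG)\otimes A')$, the iterated application $(\id\otimes\rho)\rho|_{A'}$ factors through $M_{A'}(C_0(\bG)^{\otimes 2}\otimes A')$, since $\id\otimes\rho'$ is obtained by applying the functor of \Cref{le:isfnc} to $\rho'$, and this functor commutes with the inclusion into the corresponding multiplier algebra over $A$. The same holds for $(\Delta\otimes\id)\rho|_{A'}$. The embedding $M_{A'}(C_0(\bG)^{\otimes 2}\otimes A')\hookrightarrow M_A(C_0(\bG)^{\otimes 2}\otimes A)$ supplied by \Cref{le:isfnc} being one-to-one, the identity $(\id\otimes\rho)\rho=(\Delta\otimes\id)\rho$ on $A'$ descends to $(\id\otimes\rho')\rho'=(\Delta\otimes\id)\rho'$ on $A'$.

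For \Cref{item:8}, rather than proving density by hand, I would invoke \Cref{pr:coun}\Cref{item:27}: since $\bG$ is regular coamenable and $\rho'$ is already non-degenerate and has been shown to satisfy \Cref{item:6} and \Cref{item:7}, it suffices to check the counit identity
\begin{equation*}
  (\varepsilon\otimes\id)\rho' = \id_{M(A')}.
\end{equation*}
For any $x\in A'$, the value $\rho'(x)=\rho(x)$ lies in $M_{A'}(C_0(\bG)\otimes A')\subseteq M_A(C_0(\bG)\otimes A)$, and applying $\varepsilon\otimes\id$ gives an element of $M(A')$ whose image in $M(A)$ is $(\varepsilon\otimes\id)\rho(x)=x$ by \Cref{pr:coun}\Cref{item:26} applied to the ambient action $\rho$. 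Injectivity of the inclusion $M(A')\hookrightarrow M(A)$ pertinent here (again \Cref{le:isfnc}, applied with $C=\bC$) forces $(\varepsilon\otimes\id)\rho'(x)=x$ on $A'$; strict continuity of the extension then upgrades this to the identity on all of $M(A')$.

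The main potential obstacle is the bookkeeping around the various multiplier-algebra extensions: one needs to verify that $\id\otimes\rho'$ (and $\id\otimes\rho'$ iterated) is genuinely the restriction of $\id\otimes\rho$, and that $\varepsilon\otimes\id$ commutes with the inclusion $M_{A'}(C_0(\bG)\otimes A')\hookrightarrow M_A(C_0(\bG)\otimes A)$. Both statements are instances of the functoriality and injectivity assertions in \Cref{le:isfnc}, so once they are cited explicitly the argument is essentially mechanical.
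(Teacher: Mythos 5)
Your proposal is correct and follows essentially the same route as the paper: the paper's proof likewise notes that $(\varepsilon\otimes\id)\rho=\id$ (from \Cref{pr:coun}\Cref{item:26}) passes to the restriction $\rho'$, and then invokes \Cref{pr:coun}\Cref{item:27} to conclude, with the coassociativity and multiplier-range hypotheses inherited from $\rho$ exactly as you describe. Your version merely spells out the bookkeeping (functoriality/injectivity from \Cref{le:isfnc} and compatibility of the slice map $\varepsilon\otimes\id$ with the embeddings) that the paper leaves implicit.
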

\begin{proof}

% % Write $C:=C_0(\bG)$. We are already assuming that $\rho'$ is a map from $A'$ to
% %   \begin{equation*}
% %     M_{A'}(C\otimes A')\le M_A(C\otimes A)
% %   \end{equation*}
% %   (the inclusion being a consequence of \Cref{le:isfnc}). Once we prove that the continuity requirement \Cref{eq:chn} holds for $\rho'$ in the form
% %   \begin{equation}\label{eq:chna'}
% %     \rho'(A')(C\otimes \bC) = C\otimes A'
% %   \end{equation}
% %   we will also have non-degeneracy (\Cref{re:ndegauto}), and the coassociativity of $\rho'$ will follow from that of $\rho$; it thus all boils down to \Cref{eq:chna'}.
% % 
% %   We already know from the analogous condition for $\rho$ that an arbitrary element $c\otimes a'\in C\otimes A'$ is of the form
% %   \begin{equation}\label{eq:cac}
% %     c\otimes a' = \rho(a)(c'\otimes 1),\ c'\in C,\ a\in A. 
% %   \end{equation}
% %   (see also \Cref{re:cohen}). We may as well assume $\varepsilon(c)\ne 0$ (in addition to $a'\ne 0$), whence applying $\varepsilon\otimes \id$ to both sides of \Cref{eq:cac} yields
% %   \begin{equation*}
% %     0\ne \varepsilon(c)a' = \varepsilon(c')a.
% %   \end{equation*}
% %   This implies that in fact $a\in A'$, and we are done.
% %
  
  By \Cref{pr:coun} \Cref{item:26} $\rho$ satisfies \Cref{eq:eid}, and hence so does its restriction $\rho'$ to $A'$. But then \Cref{pr:coun} \Cref{item:27} applies, and we are done.
\end{proof}

We can now revisit (and resolve) the two bothersome items \Cref{item:22} and \Cref{item:23} above:

\begin{enumerate}[(a$^*$)]
\item\label{item:24} When $\bG$ is coamenable its function algebra $C_0(\bG)$ 
  \begin{itemize}
  \item is {\it nuclear} by \cite[Theorem 3.3]{bt}, in the sense of \cite[\S IV.3.1]{blk}: full and reduced $C^*$ tensor products by $C_0(\bG)$ coincide canonically;
  \item hence also {\it exact} \cite[\S II.9.6.6]{blk}: $C_0(\bG)\otimes -$ preserves short exact sequences;
  \item so that $C_0(\bG)\otimes-$ preserves pullbacks \cite[Theorem 3.9]{ped-pp}.
  \end{itemize}
\item\label{item:25} Any action $\rho:A\to M(C_0(\bG)\otimes A)$ is indeed injective, by \Cref{pr:coun} \Cref{item:26}.
\end{enumerate}

\begin{theorem}\label{th:finlim}
  For a regular coamenable LCQG $\bG$ the forgetful functor $\cC^{*\bG}_1\to \cC^*_1$ creates finite limits.  
\end{theorem}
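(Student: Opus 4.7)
The plan is to reduce the claim to creation of equalizers: since \Cref{pr:finprod} already handles finite products, and every finite limit can be expressed as an equalizer of two morphisms between finite products, creation of finite products and equalizers jointly forces creation of all finite limits. So consider a parallel pair $f,g:(A,\rho_A)\to(B,\rho_B)$ in $\cC^{*\bG}_1$ and let $\iota:E\hookrightarrow A$ be its equalizer in $\cC^*_1$, i.e.\ $E=\{a\in A : f(a)=g(a)\}$; this is a unital $C^*$-subalgebra since $f,g$ are unital. The task is to equip $E$ with a unique $\bG$-action $\rho_E$ making $\iota$ equivariant and turning the resulting cone into the equalizer in $\cC^{*\bG}_1$.

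The candidate is $\rho_E:=\rho_A|_E$, and \Cref{le:resact} will certify it is an action provided that $\rho_A(E)\subseteq M(C_0(\bG)\otimes E)\le M(C_0(\bG)\otimes A)$ (the ambient inclusion being that supplied by \Cref{le:isfnc}) and that $\rho_A|_E$ is non-degenerate; non-degeneracy is automatic from the unitality of $E$. For the inclusion, I combine two inputs. Coamenability of $\bG$ makes $C_0(\bG)$ nuclear by \cite[Theorem 3.3]{bt}, hence exact, so by \cite[Theorem 3.9]{ped-pp} the functor $C_0(\bG)\otimes-$ preserves pullbacks; together with the preservation of finite products from \Cref{le:finprod} this means $C_0(\bG)\otimes E$ identifies with the equalizer of $\id\otimes f$ and $\id\otimes g$ inside $C_0(\bG)\otimes A$. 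Equivariance of $f$ and $g$ then gives, for $a\in E$,
\begin{equation*}
  (\id\otimes f)(\rho_A(a))=\rho_B(f(a))=\rho_B(g(a))=(\id\otimes g)(\rho_A(a)),
\end{equation*}
whence for each $c\in C_0(\bG)$ the element $\rho_A(a)(c\otimes 1)\in C_0(\bG)\otimes A$ still equalizes $\id\otimes f$ and $\id\otimes g$, and thus lies in $C_0(\bG)\otimes E$. A routine factorization argument (using that $\rho_A(a)$ is already a two-sided multiplier of $C_0(\bG)\otimes A$ and that $1\otimes e\in M(C_0(\bG)\otimes E)$ for $e\in E$) upgrades this to $\rho_A(a)\in M(C_0(\bG)\otimes E)$ as a multiplier on both sides.

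With \Cref{le:resact} supplying the action $\rho_E$, equivariance of $\iota$ is built in, and uniqueness of the lift follows from the injectivity of $\id\otimes\iota:M(C_0(\bG)\otimes E)\to M(C_0(\bG)\otimes A)$ given by \Cref{le:isfnc}: any action on $E$ making $\iota$ equivariant must coincide with $\rho_E$. The universal property of the equalizer in $\cC^{*\bG}_1$ reduces to that in $\cC^*_1$ for the same reason: any equivariant test morphism $h:(A',\rho_{A'})\to(A,\rho_A)$ with $fh=gh$ factors through $E$ as a unique $\cC^*_1$-morphism, and that factorization is then forced to be equivariant by the injectivity of $\id\otimes\iota$ applied to $A'$. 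The main technical obstacle sits entirely in the inclusion step $\rho_A(E)\subseteq M(C_0(\bG)\otimes E)$: it is exactly there that both coamenability (through nuclearity of $C_0(\bG)$, hence preservation of equalizers by $C_0(\bG)\otimes -$) and regularity (routed through \Cref{le:resact} and \Cref{pr:coun}) enter indispensably.
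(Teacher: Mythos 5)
Your proposal is correct and follows essentially the same route as the paper: reduce to equalizers via finite products, restrict the ambient action to the equalizer subalgebra, use coamenability (nuclearity, hence exactness, of $C_0(\bG)$) to identify $C_0(\bG)\otimes E$ with the corresponding equalizer, and invoke \Cref{le:resact} (hence \Cref{pr:coun} and regularity) plus the injectivity from \Cref{le:isfnc} for the action axioms and uniqueness. The only difference is bookkeeping: the paper delegates the step $\rho_A(E)\subseteq M(C_0(\bG)\otimes E)$ to \Cref{le:finlim}, which says the multiplier functor $A\mapsto M_A(C\otimes A)$ preserves equalizers for exact $C$, whereas you reprove that inline via the elements $\rho_A(a)(c\otimes 1)$ and the (indeed routine) multiplier upgrade.
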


We first need the arbitrary-finite-limit version of \Cref{le:finprod}.

\begin{lemma}\label{le:finlim}
  For any exact $C^*$-algebra $C\in \cC^*$ the endofunctors $C\otimes -$ and \Cref{eq:isfnc} of $\cC^*$ both preserve finite limits.
\end{lemma}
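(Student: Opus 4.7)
The plan is to split the verification into finite products and equalizers, since every finite limit in $\cC^*$ can be assembled from those two kinds. Finite products are already handled by \Cref{le:finprod} (which needs no exactness hypothesis), so only the equalizer case remains for both functors.

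For $C \otimes -$, I would simply quote \cite[Theorem 3.9]{ped-pp}, according to which exactness of $C$ is equivalent to $C \otimes -$ preserving pullbacks on $\cC^*$. Since an equalizer of $f, g : A \to B$ is the pullback of $(f, g) : A \to B \times B$ along the diagonal $B \to B \times B$, equalizer preservation follows at no extra cost.

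For the multiplier functor $A \mapsto M_A(C \otimes A)$, fix a parallel pair $f, g : A \to B$ with equalizer $\iota: E \hookrightarrow A$. I would verify directly that $M_E(C \otimes E)$, embedded in $M_A(C \otimes A)$ via \Cref{le:isfnc}, is the equalizer of the induced pair $M_A(C \otimes A) \rightrightarrows M_B(C \otimes B)$. One inclusion is routine: for $x \in M_E(C \otimes E)$ the products $x(c \otimes 1)$ lie in $C \otimes E$, on which the two extensions $\id \otimes f$ and $\id \otimes g$ coincide; together with a density argument via Cohen factorization (so that products by $C \otimes 1$ span a dense subset of $C \otimes B$), this forces $(\id \otimes f)(x) = (\id \otimes g)(x)$ as multipliers.

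The converse direction is where the exactness hypothesis enters decisively. Given $x \in M_A(C \otimes A)$ equalizing the two induced maps, strict continuity of the multiplier extensions gives $(\id \otimes f)(x(c \otimes 1)) = (\id \otimes g)(x(c \otimes 1))$ inside $C \otimes B$ for every $c \in C$. But by the equalizer-preservation established for $C \otimes -$, the equalizer of $\id \otimes f$ and $\id \otimes g$ in $\cC^*$ is precisely $C \otimes E$, so $x(c \otimes 1) \in C \otimes E$; an identical argument on the other side shows $(c \otimes 1)x \in C \otimes E$, and hence $x \in M_E(C \otimes E)$ by definition. The main technical obstacle throughout is the bookkeeping around the multiplier extension from \Cref{le:isfnc} — specifically, the need to verify that it intertwines multiplication by $C \otimes 1$ with the tensor morphisms $\id \otimes f, \id \otimes g$, so that the equalizer computation inside $C \otimes A$ can be transported into a statement about multipliers.
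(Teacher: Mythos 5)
Your proposal is correct and follows essentially the same route as the paper: finite products via \Cref{le:finprod}, the functor $C\otimes-$ handled by \cite[Theorem 3.9]{ped-pp}, and the multiplier functor treated by showing that any element of the multiplier-level equalizer multiplies $C\otimes 1$ into the equalizer of $\id\otimes f,\id\otimes g$ at the level of $C\otimes-$, which exactness identifies with $C\otimes E$. The only (cosmetic) differences are that you verify the easy inclusion by a density argument where the paper uses functoriality of \Cref{eq:isfnc}, and you spell out the diagonal-pullback reduction from equalizers to pullbacks that the paper leaves implicit.
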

\begin{proof}
  Having disposed of finite products (\Cref{le:finprod}), it will be enough to handle either pullbacks or equalizers \cite[Proposition 13.3]{ahs} (we will switch between the two freely). For $C\otimes -$ this is precisely what \cite[Theorem 3.9]{ped-pp} does, so we are left having to show that assuming $C$ is exact,
  \begin{equation*}
    \cC^*\ni A\mapsto M_A(C\otimes A) \in \cC^*
  \end{equation*}
  preserves equalizers. To that end, consider a parallel pair $\iota_i:B\to B'$, $i=1,2$ of morphisms in $\cC^*$ together with their equalizer $\iota:A\to B$. That $M_A(C\otimes A)$ maps (injectively: \Cref{le:isfnc}) to
  \begin{equation*}
    X:=\text{equalizer of }\id\otimes \iota_i:M_B(C\otimes B)\to M_{B'}(C\otimes B'),\ i=1,2
  \end{equation*}
  follows from the universality property of that equalizer; it thus remains to argue that that canonical map is onto or, equivalently, that every element
  \begin{equation*}
    x\in X\subseteq M_B(C\otimes B)
  \end{equation*}
  in fact belongs to $M_A(C\otimes A)$. This, by definition, means that $x(c\otimes 1)\in C\otimes A$ for arbitrary $c\in C$. Since $x(c\otimes 1)$ certainly belongs to the equalizer of
  \begin{equation*}
    \id\otimes \iota_i:C\otimes B\to C\otimes B',
  \end{equation*}
  the conclusion follows from the claim on $C\otimes -$.
\end{proof}

\pf{th:finlim}
\begin{th:finlim}
  
% %   That selfsame desired constraint \Cref{eq:wantequiv} does indeed provide a unique morphism $\rho$ fitting into the diagram, given that
% % 
% % 
% %   
% %   The coassociativity of $\rho$ follows similarly, from the fact that
% %   \begin{equation*}
% %     (\id\otimes\rho)\rho    
% %     \quad\text{and}\quad
% %     (\Delta\otimes\id)\rho
% %   \end{equation*}
% %   both fit as the upper left-hand map in the commutative diagram 
% %   \begin{equation*}
% %     \begin{tikzpicture}[auto,baseline=(current  bounding  box.center)]
% %       \path[anchor=base] 
% %       (0,0) node (l) {$A$}
% %       +(3,.5) node (u) {$M(C^{\otimes 2}\otimes A)$}
% %       +(4,-.5) node (d) {$B$}
% %       +(7,0) node (r) {$M(C^{\otimes 2}\otimes B)$;}
% %       ;
% %       \draw[->] (l) to[bend left=6] node[pos=.5,auto] {$\scriptstyle $} (u);
% %       \draw[right hook->] (u) to[bend left=6] node[pos=.5,auto] {$\scriptstyle \id\otimes\iota$} (r);
% %       \draw[right hook->] (l) to[bend right=6] node[pos=.5,auto,swap] {$\scriptstyle \iota$} (d);
% %       \draw[->] (d) to[bend right=6] node[pos=.5,auto,swap] {$\scriptstyle (\id\otimes\rho_B)\rho_B = (\Delta\otimes\id)\rho_B$} (r);
% %     \end{tikzpicture}
% %   \end{equation*}
% %   note that we are using the fact that $C$ being nuclear, so is $C\otimes C$ \cite[Proposition IV.31.1]{blk}. 
% %   
% %   The last agenda item is the continuity constraint for $\rho$: equation \Cref{eq:chn} (or \Cref{eq:aactcont}).
% %
  
  Once more, an application of \cite[Proposition 13.3]{ahs} and the fact that we already have the claim for finite {\it products} reduces the discussion to equalizers. We thus fix actions
  \begin{equation*}
    \rho_B:B\circlearrowleft\bG
    \quad\text{and}\quad
    \rho_{B'}:B'\circlearrowleft\bG
  \end{equation*}
  in $\cC^{*\bG}_1$ and equivariant morphisms $\iota_i:B\to B'$, $i=1,2$, as well as their equalizer $\iota:A\to B$ in $\cC^*_1$.

  Write $C:=C_0(\bG)$. First, assuming an action $\rho:=\rho_A:A\circlearrowleft\bG$ making $\iota:A\to B$ equivariant, that equivariance condition
  \begin{equation}\label{eq:wantequiv}
    \begin{tikzpicture}[auto,baseline=(current  bounding  box.center)]
      \path[anchor=base] 
      (0,0) node (l) {$A$}
      +(2,.5) node (u) {$M(C\otimes A)$}
      +(3,-.5) node (d) {$B$}
      +(5,0) node (r) {$M(C\otimes B)$}
      ;
      \draw[->] (l) to[bend left=6] node[pos=.5,auto] {$\scriptstyle \rho$} (u);
      \draw[right hook->] (u) to[bend left=6] node[pos=.5,auto] {$\scriptstyle \id\otimes\iota$} (r);
      \draw[right hook->] (l) to[bend right=6] node[pos=.5,auto,swap] {$\scriptstyle \iota$} (d);
      \draw[->] (d) to[bend right=6] node[pos=.5,auto,swap] {$\scriptstyle \rho_B$} (r);
    \end{tikzpicture}
  \end{equation}
  will determine $\rho$ uniquely by the injectivity of the upper right-hand arrow map (\Cref{le:isfnc}). The {\it creation} part of the claim thus follows, assuming we actually construct such an action $\rho_A$.

  Furthermore, the selfsame desired constraint \Cref{eq:wantequiv} shows that $\rho$ must be the restriction of $\rho_B$ to $A$. On the other hand, assuming $\rho_B$ does indeed map $A$ to $M(C\otimes A)$ it will be an action by \Cref{le:resact}. In conclusion, all we need is
  \begin{equation*}
    \rho_B(A)\subseteq M(C\otimes A)\le M(C\otimes B). 
  \end{equation*}
  To see that this is indeed the case note that
  \begin{itemize}
  \item the upper right-hand arrow in \Cref{eq:wantequiv} is the equalizer of
    \begin{equation}\label{eq:idii}
      \id\otimes\iota_i:M(C\otimes B)\to M(C\otimes B')
    \end{equation}
    (\Cref{le:finlim});
  \item and the lower composition $\rho_B\iota$ equalizes the two morphisms \Cref{eq:idii}.
  \end{itemize}
  As remarked, this finishes the proof. 
\end{th:finlim}

As noted before, \Cref{th:finlim} has a bearing on the description of monomorphisms in the category $\cC^{*\bG}_1$.

\begin{corollary}\label{cor:moninj}
  For any regular coamenable locally compact group $\bG$ the monomorphisms in $\cC^{*\bG}_1$ are precisely the injective morphisms.
\end{corollary}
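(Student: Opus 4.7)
The plan is to transfer the monomorphism-characterization freely between $\cC^{*\bG}_1$ and $\cC^*_1$ using the limit-creation result \Cref{th:finlim}.

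\textbf{Easy direction: injective $\Rightarrow$ monic.} An injective equivariant morphism $f:A\to B$ is left-cancellable at the level of underlying set maps, hence monic in $\cat{Set}$. Since the composite forgetful functor $\cC^{*\bG}_1\to \cC^*_1\to \cat{Set}$ is faithful and faithful functors reflect monomorphisms (a standard one-line verification), $f$ is monic in $\cC^{*\bG}_1$.

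\textbf{Hard direction: monic $\Rightarrow$ injective.} Suppose $f:A\to B$ is monic in $\cC^{*\bG}_1$. By \Cref{th:finlim}, the forgetful functor $U:\cC^{*\bG}_1\to \cC^*_1$ creates (in particular, preserves) pullbacks; so the self-pullback $A\times_B A$ of $f$ in $\cC^{*\bG}_1$ exists, and its underlying unital $C^*$-algebra is the self-pullback of $Uf$ in $\cC^*_1$. By \Cref{le:monchar}, the hypothesis that $f$ is monic in $\cC^{*\bG}_1$ forces the two projections $\pi_i:A\times_B A\to A$ to be isomorphisms in $\cC^{*\bG}_1$. Applying the functor $U$, the morphisms $U\pi_i$ are then isomorphisms in $\cC^*_1$. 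Invoking \Cref{le:monchar} in the reverse direction, $Uf$ is monic in $\cC^*_1$, and the observation made in the paragraph following \Cref{th:episurj} (namely that the forgetful functor $\cC^*_1\to \cat{Set}$ preserves finite limits, so self-pullbacks in $\cC^*_1$ coincide with set-theoretic self-pullbacks, whence the projections can be isomorphisms only if $Uf$ is one-to-one) shows that $Uf$ is injective. Therefore $f$ itself is injective as a $C^*$-morphism, completing the proof.

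\textbf{Where the difficulty lies.} The entire argument is essentially an application of the categorical \Cref{le:monchar} together with limit-creation; the substantive content is concentrated in \Cref{th:finlim}, whose proof in turn depended on the coamenability of $\bG$ (to invoke \Cref{pr:coun} and thus restrict actions to $C^*$-subalgebras via \Cref{le:resact}) and on regularity (to apply \cite[Proposition 5.8]{bsv}). Once \Cref{th:finlim} is in hand, the present corollary is a formal consequence and presents no further obstacle.
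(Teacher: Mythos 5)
Your proposal is correct and follows essentially the same route as the paper: the easy direction is the standard concrete-category observation, and the hard direction combines \Cref{le:monchar} with the limit-creation/preservation of \Cref{th:finlim}, so that the self-pullback in $\cC^{*\bG}_1$ is computed as in $\cC^*_1$ (hence set-theoretically), forcing injectivity. The only difference is that you spell out the intermediate step ``$Uf$ is monic in $\cC^*_1$'' explicitly, which the paper compresses into one sentence.
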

\begin{proof}
  As in the discussion preceding \Cref{th:episurj}, that injectivity implies the property of being monic is immediate. The converse follows from the criterion in \Cref{le:monchar} together with the fact that by \Cref{th:finlim} pullbacks in $\cC^{*\bG}_1$ are computed as in $\cC^*_1$ (and hence as in $\cat{Set}$). 
\end{proof}

%%%%%%%%%%%%%%%%%%%%%%%%%%%%%%%%%%%%%%%%%%%%%%%%%%%%%%%%%%%%%%%%%%%%%%%%%%%%%
\subsection{The action comonad}\label{subse:cmnd}

Classically, \Cref{eq:gtop} is a good deal more than a limit-creating right adjoint: according to the proof of \cite[Theorem 3.2.4]{dvr-bk} it is {\it monadic}. We recall some of the language; some of the numerous sources include \cite[Chapter VI]{mcl}, \cite[Chapter 3]{bw}, \cite[\S V.20]{ahs} and \cite[\S 0.4]{dvr-bk}.

\begin{definition}\label{def:monad}
  Let $\cC$ be a category.
  \begin{enumerate}[(a)]
  \item A {\it monad} or {\it triple} on $\cC$ is an endofunctor $T:\cC\to \cC$ equipped with natural transformations
    \begin{equation*}
      \eta:\id\to T
      \quad\text{and}\quad
      \mu:T^2\to T
    \end{equation*}
    making $T$ into a {\it unital algebra} in the monoidal category of $\cC$-endofunctors. In other words, we require associativity
    \begin{equation*}
      \begin{tikzpicture}[auto,baseline=(current  bounding  box.center)]
        \path[anchor=base] 
        (0,0) node (l) {$T^3$}
        +(2,.5) node (u) {$T^2$}
        +(2,-.5) node (d) {$T^2$}
        +(4,0) node (r) {$T$}
        ;
        \draw[->] (l) to[bend left=6] node[pos=.5,auto] {$\scriptstyle \mu T$} (u);
        \draw[->] (u) to[bend left=6] node[pos=.5,auto] {$\scriptstyle \mu$} (r);
        \draw[->] (l) to[bend right=6] node[pos=.5,auto,swap] {$\scriptstyle T\mu$} (d);
        \draw[->] (d) to[bend right=6] node[pos=.5,auto,swap] {$\scriptstyle \mu$} (r);
      \end{tikzpicture}
    \end{equation*}
    and unitality:
    \begin{equation*}
      \begin{tikzpicture}[auto,baseline=(current  bounding  box.center)]
        \path[anchor=base] 
        (0,0) node (l) {$T^2$}
        +(2,0.8) node (u) {$T$}
        +(2,-.8) node (d) {$T$}
        +(4,0) node (r) {$T$.}
        ;
        \draw[<-] (l) to[bend left=6] node[pos=.5,auto] {$\scriptstyle T\eta$} (u);
        \draw[->] (u) to[bend left=6] node[pos=.5,auto] {$\scriptstyle \id$} (r);
        \draw[<-] (l) to[bend right=6] node[pos=.5,auto,swap] {$\scriptstyle \eta T$} (d);
        \draw[->] (d) to[bend right=6] node[pos=.5,auto,swap] {$\scriptstyle \id$} (r);
        \draw[->] (l) to[bend left=0] node[pos=.5,auto] {$\scriptstyle \mu$} (r);
      \end{tikzpicture}
    \end{equation*}
  \item For a monad $T$ a {\it $T$-algebra} is a pair $(c,s)$ consisting of an object $c\in \cC$ and a morphism $s:Tc\to c$ in $\cC$ satisfying the conditions
    \begin{equation*}
      \begin{tikzpicture}[auto,baseline=(current  bounding  box.center)]
        \path[anchor=base] 
        (0,0) node (l) {$T^2c$}
        +(2,.5) node (u) {$Tc$}
        +(2,-.5) node (d) {$Tc$}
        +(4,0) node (r) {$c$}
        ;
        \draw[->] (l) to[bend left=6] node[pos=.5,auto] {$\scriptstyle \mu_c$} (u);
        \draw[->] (u) to[bend left=6] node[pos=.5,auto] {$\scriptstyle s$} (r);
        \draw[->] (l) to[bend right=6] node[pos=.5,auto,swap] {$\scriptstyle Ts$} (d);
        \draw[->] (d) to[bend right=6] node[pos=.5,auto,swap] {$\scriptstyle s$} (r);
      \end{tikzpicture}
    \end{equation*}
    and
    \begin{equation*}
      \begin{tikzpicture}[auto,baseline=(current  bounding  box.center)]
        \path[anchor=base] 
        (0,0) node (l) {$c$}
        +(2,.5) node (u) {$Tc$}
        +(4,0) node (r) {$c$.}
        ;
        \draw[->] (l) to[bend left=6] node[pos=.5,auto] {$\scriptstyle \mu_c$} (u);
        \draw[->] (u) to[bend left=6] node[pos=.5,auto] {$\scriptstyle s$} (r);
        \draw[->] (l) to[bend right=6] node[pos=.5,auto,swap] {$\scriptstyle \id$} (r);
      \end{tikzpicture}
    \end{equation*}
  \item The $T$-algebras form a category $\cC^T$ (the {\it Eilenberg-Moore category} of $T$), having as morphisms $(c,s)\to (c',s')$ those maps $f:c\to c'$ in $\cC$ that make
    \begin{equation*}
      \begin{tikzpicture}[auto,baseline=(current  bounding  box.center)]
        \path[anchor=base] 
        (0,0) node (l) {$Tc$}
        +(2,.5) node (u) {$Tc'$}
        +(2,-.5) node (d) {$c$}
        +(4,0) node (r) {$c'$}
        ;
        \draw[->] (l) to[bend left=6] node[pos=.5,auto] {$\scriptstyle Tf$} (u);
        \draw[->] (u) to[bend left=6] node[pos=.5,auto] {$\scriptstyle s'$} (r);
        \draw[->] (l) to[bend right=6] node[pos=.5,auto,swap] {$\scriptstyle s$} (d);
        \draw[->] (d) to[bend right=6] node[pos=.5,auto,swap] {$\scriptstyle f$} (r);
      \end{tikzpicture}
    \end{equation*}
    commute.
  \item A functor $\cD\to \cC$ is {\it monadic} or {\it tripleable} if it is naturally isomorphic to the forgetful functor $\cC^T\to \cC$ for a monad $T$ on $\cC$.
  \end{enumerate}
  We have dual notions: comonads $S$ on $\cC$, corresponding categories $\cC_S$ of {\it $S$-coalgebras}, comonadic functors, and so on, which we leave it to the reader to unpack. One can dispatch them all formally by simply noting that a comonad on $\cC$ is a monad on the opposite category $\cC^{op}$.
\end{definition}

As explained in \cite[discussion preceding Definition 20.1]{ahs} and further illustrated in \cite[\S 24]{}, monadic functors are meant to capture the intuition of ``forgetting some algebraic structure''. To wit, examples of monadic functors include (by \cite[\S VI.8, Theorem 1]{mcl}), for instance, the forgetful functors from
\begin{itemize}
\item groups;
\item or monoids;
\item or rings;
\item or Lie algebras;
\end{itemize}
etc. etc. to \cat{Set}. There is also the motivating (for us, here) example of \Cref{eq:gtop}, which fits into the same framework of forgetting algebraic structure (namely the structure consisting of the operators on a space induced by a $\bG$-action). The following result recasts that monadicity result in its appropriate non-commutative context, keeping in mind the requisite dualization in passing from spaces to $C^*$-algebras. 

\begin{theorem}\label{th:coamcomon}
  For any regular coamenable locally compact quantum group $\bG$ the forgetful functor $\cC^{*\bG}_1\to \cC^*_1$ is comonadic.
\end{theorem}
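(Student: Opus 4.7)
The strategy is to apply Beck's comonadicity theorem in its creation form: $U$ is comonadic if and only if $U$ has a right adjoint and creates equalizers of $U$-split parallel pairs. Both hypotheses are in fact already at hand from the preceding sections.

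First, I would observe that $U:\cC^{*\bG}_1\to \cC^*_1$ has a right adjoint. The cleanest way to see this is via the identification in the proof of \Cref{cor:ccmpl-noa}: taking $A=\{0\}$ (the terminal object of $\cC^*_1$), the categories $\cat{cpct}(\rho)$ and $\cat{cpct}(A)$ collapse to $\cC^{*\bG}_1$ and $\cC^*_1$ respectively, and the composite forgetful functor of \Cref{eq:allforget} becomes our $U$. \Cref{cor:ladj}, whose proof is an application of \Cref{pr:saft} to the local presentability of $\cC^{*\bG}_1$ furnished by \Cref{th:allpres}, then says $U$ is a left adjoint. Hence $U$ admits a right adjoint $R$ and gives rise to the comonad $S:=UR$ on $\cC^*_1$.

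Second, I would invoke \Cref{th:finlim}: under the regularity and coamenability of $\bG$, the functor $U$ creates \emph{all} finite limits, hence in particular it creates every equalizer, a fortiori the equalizers of $U$-split pairs. (Split equalizers are absolute, so they are preserved by every functor and always exist in $\cC^*_1$; only the creation step is in question, and \Cref{th:finlim} settles it.) Beck's theorem then produces an equivalence between $\cC^{*\bG}_1$ and the category $(\cC^*_1)_S$ of $S$-coalgebras, with $U$ identified with the forgetful functor. This is precisely the comonadicity claim.

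The main obstacle has already been overcome in proving \Cref{th:finlim}; the essential input from coamenability is the counit-based rigidity statement \Cref{pr:coun} and the subalgebra-restriction \Cref{le:resact}, while regularity is what upgrades \Cref{eq:eid} to a genuine action. Given those results, the present deduction is essentially mechanical. If one preferred the crude form of Beck --- preservation of $U$-split equalizers plus reflection of isomorphisms --- the only additional verification would be that $U$ reflects isomorphisms, which is straightforward: an equivariant $f$ with a $C^*$-algebra-level inverse $g$ satisfies $\rho_B f=(\id\otimes f)\rho_A$; composing with $g$ on the right and with $\id\otimes g$ on the left yields $(\id\otimes g)\rho_B=\rho_A g$, so $g$ is automatically equivariant.
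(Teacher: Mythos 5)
Your proposal is correct and follows essentially the same route as the paper: both apply Beck's (co)monadicity theorem, using \Cref{cor:ladj} (via \Cref{th:allpres} and \Cref{pr:saft}) for the adjunction and \Cref{th:finlim} for the equalizer hypothesis. The paper uses the crude form (reflection of isomorphisms plus existence and preservation of the relevant equalizers, the former noted as obvious and the latter from \Cref{cor:allgood} and \Cref{th:finlim}), which your final paragraph also covers, so the two arguments coincide in substance.
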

\begin{proof}
  This will be a straightforward application of {\it Beck's monadicity theorem} (\cite[\S 3.3, Theorem 10]{bw}). We have to check that 
  \begin{itemize}
  \item the forgetful functor is a left adjoint, which we know from \Cref{cor:ladj};
  \item and reflects isomorphisms, which is obvious: in either category isomorphisms are bijective morphisms;
  \item and that $\cC^{*\bG}_1$ has certain equalizers which the forgetful functor then preserves.

    We are not concerned here with the precise class of equalizers one is usually interested in: we already know they all exist (\Cref{cor:allgood}) and (for coamenable $\bG$) are all preserved (\Cref{th:finlim}).
  \end{itemize}
  This finishes the proof.
\end{proof}

\addcontentsline{toc}{section}{References}

\Addresses

\end{document}